\documentclass[11pt]{amsart}
\usepackage[T1]{fontenc}
\usepackage{lmodern,microtype}
\usepackage{amsmath,amssymb,mathtools,mathrsfs}
\usepackage{booktabs,array}
\usepackage{hyperref,aliascnt}
\usepackage{orcidlink}
\usepackage[nameinlink,capitalise,noabbrev]{cleveref}
\hypersetup{hidelinks,
  pdftitle={The fourth continuous bounded cohomology of the complex symplectic group},
  pdfauthor={Sixuan Gu, Yaoyu Cheng, Wei Qi}}
\allowdisplaybreaks
\numberwithin{equation}{section}
\newtheorem{theorem}{Theorem}[section]
\newaliascnt{proposition}{theorem}
\newtheorem{proposition}[proposition]{Proposition}
\aliascntresetthe{proposition}
\newaliascnt{lemma}{theorem}
\newtheorem{lemma}[lemma]{Lemma}
\aliascntresetthe{lemma}
\newaliascnt{corollary}{theorem}
\newtheorem{corollary}[corollary]{Corollary}
\aliascntresetthe{corollary}
\theoremstyle{definition}
\newaliascnt{definition}{theorem}

\aliascntresetthe{definition}
\newaliascnt{remark}{theorem}

\aliascntresetthe{remark}
\DeclareMathOperator{\Sp}{Sp}
\DeclareMathOperator{\SL}{SL}
\DeclareMathOperator{\SO}{SO}
\DeclareMathOperator{\Pf}{Pf}
\DeclareMathOperator{\Alt}{Alt}
\DeclareMathOperator{\sgn}{sgn}
\DeclareMathOperator{\diag}{diag}
\DeclareMathOperator{\Hom}{Hom}
\DeclareMathOperator{\Stab}{Stab}
\newcommand{\CC}{\mathbb C}
\newcommand{\RR}{\mathbb R}
\newcommand{\PP}{\mathbb P}
\newcommand{\TT}{\mathcal T}
\newcommand{\cU}{\mathcal U}
\newcommand{\Hcb}{H_{\mathrm{cb}}}
\newcommand{\Hc}{H_{\mathrm c}}
\newcommand{\Hm}{H_{\mathrm m}}
\newcommand{\Hmb}{H_{\mathrm{mb}}}

\newcommand{\orb}{\mathrm{orb}}
\newcommand{\eff}{\mathrm{eff}}
\newcommand{\TV}{\mathrm{TV}}
\newcommand{\cochain}{\mathscr C}
\newcommand{\tuplechain}{\widetilde{\mathscr C}}
\newcommand{\cR}{\mathscr R}
\newcommand{\sC}{\mathcal C}
\newcommand{\sM}{\mathcal M}
\title[Vanishing of $H_{\mathrm{cb}}^4(\Sp(4,\CC))$]
{The Fourth Continuous Bounded Cohomology of the Complex Symplectic Group}
\author{Sixuan Gu}
\address{Hetao Institute of Mathemtatics and Interdisciplinery Sciences, Shenzhen, China}
\email{sixuangu@link.cuhk.edu.cn}
\author{Yaoyu Cheng}
\address{Faculty of Business Administration, The Chinese University of Hong Kong,
  Hong Kong, China}
\email{yaoyu@u.nus.edu}
\author[Wei Qi]{Wei Qi\,\orcidlink{0009-0004-5794-4907}}
\address{Department of Physics, The Ohio State University,
  Columbus, OH 43210, USA}
\email{qi.673@osu.edu}
\date{\today}
\subjclass[2020]{Primary 20J06, 22E41; Secondary 15A63, 22E46}
\keywords{bounded cohomology, symplectic groups, measurable cohomology,
  functional equations, period obstruction}
\begin{document}
\begin{abstract}
We prove that $H_{\mathrm{cb}}^4(\Sp(4,\CC);\RR)=0$. Together with
Blatz's secondary stability and the rank-one theorem of Bucher--Savini,
this gives degree-four vanishing for all complex symplectic and odd
complex orthogonal groups. In normalized symplectic Gram coordinates,
we establish a bounded-primitive estimate and compute the measurable
cohomology of the projective action. An explicit rational cocycle has
divergent periods on a family of finite orbit cycles of uniformly
bounded $\ell^1$-mass. A two-cone averaging construction extends the
period estimate to measurable cochains and excludes bounded
representatives of every nonzero degree-four measurable action class.
\end{abstract}
\maketitle
\setcounter{tocdepth}{1}
\tableofcontents

\section{Introduction}\label{sec:introduction}

Let $V=\CC^4$ carry a nondegenerate alternating bilinear form $\omega$, and
put
\[
 G=\Sp(V,\omega)\cong\Sp(4,\CC),\qquad X=\PP(V).
\]
We regard $G$ as a real Lie group. The projective space $X$ consists of
complex lines in $V$; its measure class is the class of Fubini--Study volume.
The theorem concerns continuous bounded group cohomology with trivial real
coefficients:

\begin{theorem}\label{thm:main}
One has $\Hcb^4(\Sp(4,\CC);\RR)=0$. Consequently, for every $r\ge1$,
\[
 \Hcb^4(\Sp(2r,\CC);\RR)=0,
 \qquad
 \Hcb^4(\SO(2r+1,\CC);\RR)=0.
\]
\end{theorem}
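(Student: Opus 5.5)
The plan is to reduce the vanishing of $\Hcb^4(G;\RR)$ to a statement about measurable cochains on $X=\PP(V)$, and then to rule out bounded representatives of nonzero classes by a period obstruction. The action of $G$ on $X$ is transitive, with parabolic stabilizer $P$, and $P$ is amenable modulo its radical only after passing to a suitable boundary. I would therefore use the Burger--Monod resolution by $L^\infty$ functions on the Furstenberg boundary $G/B$ and compare it with the complex of bounded measurable functions on $X^{n+1}$. Since $\Sp(4,\CC)$ has real rank $2$ and $G$ acts on $X$ with amenable-enough stabilizers, the complex $(L^\infty(X^{\bullet+1})^G)$ computes $\Hcb^\bullet(G)$. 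Possibly this needs an argument relating $L^\infty(X^{\bullet+1})$ to $L^\infty((G/B)^{\bullet+1})$ through the fibration $G/B\to X$; I would attempt this via a transfer/averaging over the fibres.

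First I would parametrize generic $(n+1)$-tuples of lines by normalized symplectic Gram coordinates. Write the lines as $[v_i]$, so the invariants are cross-ratio-type expressions in $\omega(v_i,v_j)$. For $n+1\le 5$ points in $\PP^3$ this gives an explicit orbit space, of complex dimension $\binom{n+1}{2}-(n+1)$, modulo the scaling. Working in these coordinates, I would then:
\begin{enumerate}
\item compute the measurable (unbounded) cohomology $\Hm^\bullet$ of the projective action in degrees $3,4,5$, identifying $\Hm^4$ with an explicit finite-dimensional space spanned by a rational cocycle $c$ in Gram coordinates;
\item prove a bounded-primitive estimate: every bounded measurable $4$-cocycle that is measurably a coboundary is a coboundary of a bounded $3$-cochain.
\end{enumerate}
Together these show that $\Hcb^4(G)$ injects into the span of $[c]$, modulo its bounded classes.

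The decisive step is to show that no nonzero multiple of $[c]$ admits a bounded representative. I would construct finite orbit cycles $z_k$, which are integral $4$-chains of tuples that are cycles modulo the $G$-action, with uniformly bounded $\ell^1$-mass. The construction would degenerate points along a cone so that $\langle c,z_k\rangle\to\infty$, for instance through logarithmic growth of a dilogarithm-like integral of $c$. If $c+db$ were bounded, pairing would give $|\langle c+db,z_k\rangle|\le \|c+db\|_\infty\cdot\mathrm{mass}(z_k)$. Since $\langle db,z_k\rangle=0$, this contradicts the divergence of the periods. The difficulty is that $b$ is only measurable, so evaluation on the finitely many tuples in $z_k$ is meaningless. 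I would fix this by averaging: replace each tuple by a small random perturbation supported in two transverse cones. This makes the pairing an integral against an absolutely continuous measure, while keeping the boundary relation and the mass bound. Making this two-cone averaging compatible with the cycle condition is the main obstacle I expect.

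Finally, the consequences for $\Sp(2r,\CC)$ and $\SO(2r+1,\CC)$ follow by combining results. For $r=1$, $\Sp(2,\CC)=\SL(2,\CC)$, and $\SO(3,\CC)$ is locally isomorphic to it, so Bucher--Savini's rank-one theorem gives vanishing. For $r\ge 2$, Blatz's secondary stability identifies $\Hcb^4$ of $\Sp(2r,\CC)$ with that of $\Sp(4,\CC)$. The same holds for $\SO(2r+1,\CC)$, using the stable range together with the isogeny $\SO(5,\CC)\sim\Sp(4,\CC)$; continuous bounded cohomology with real coefficients is insensitive to finite centers.
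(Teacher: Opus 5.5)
Your outline follows the paper's architecture, but the first step is wrong as justified. The stabilizer of a point of $X=\PP^3$ is $Q=L\ltimes N$ with Levi factor $L\cong\CC^\times\times\SL(2,\CC)$. So the projective action is not amenable, and $L^\infty(X^{\bullet+1})^G$ does not compute $\Hcb^\bullet(G)$. Nor can you transfer from the Furstenberg boundary: the fibre $Q/B\cong\PP^1(\CC)$ of $G/B\to X$ carries no $Q$-invariant probability measure to average over, and real rank plays no role. What holds is only a degree-four transgression statement. In the bounded double complex $L^\infty(G^{q+1}\times X^p)^G$, Blatz shows that $d_5:E_5^{0,4}\to E_5^{5,0}=\Hmb^4(G\curvearrowright X)$ is an isomorphism, where $E_5^{0,4}=\ker\bigl(\Hcb^4(G)\to\Hcb^4(\SL(2,\CC))\bigr)$. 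A class restricting nontrivially to the Levi $\SL(2,\CC)$ would be invisible to everything you do on $X$. Hence the Bucher--Savini vanishing $\Hcb^4(\SL(2,\CC))=0$ is already needed for the rank-two statement, not just as the base of the induction. The same transgression lets the paper replace a bounded action $4$-cocycle by its alternation modulo a bounded coboundary. You need that step too, since the bounded-primitive estimate $\|f\|_\infty\le7\|Df\|_\infty$ is proved only for alternating $f$.

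A second gap is in the period step. The target $\Hm^4(G\curvearrowright X)\cong\Hom_{\RR}(\CC,\RR)$ is two-dimensional: it is the $d_2$-image of $\Hc^1$ of the triple stabilizer $\{\pm1\}\times(\CC,+)$, and it is spanned by the real and imaginary parts of a complex rational cocycle $\cR$, not by multiples of one class. You must therefore make the periods of $\lambda\circ\cR$ unbounded for every nonzero real functional $\lambda$, and divergence of the modulus of the periods is not enough. A logarithmic divergence $c\log(\cdot)$ of the kind you suggest leaves the periods of $\lambda\circ\cR$ bounded for the $\lambda$ with $\lambda(c)=0$. The paper instead has rational periods $P(A,3)$ with leading term $\frac{22}{45}(A-2)^{-2}$, and approaches along $A=2+re^{i\theta}$ with $\lambda(e^{-2i\theta})\ne0$.

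For the averaging, randomly perturbing vertices would destroy the cycle relation. The working device is to cone twice: each face copy gets an apex transported between paired faces, then each parent's cone boundary gets a second apex. This gives a homologous cycle of mass at most $20\|Z\|_1$ whose simplices consist of two free vertices and a fixed generic triple. The configuration maps to $\cU_5$ are then submersions, and averaging over the apexes yields an absolutely continuous measure cycle.
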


The continuous cohomology is given by the compact-dual calculation
\begin{equation}\label{eq:continuous}
 \Hc^\bullet(G;\RR)
 \cong H^\bullet(\Sp(2);\RR)
 \cong\Lambda_{\RR}(\xi_3,\xi_7).
\end{equation}
Here $\Sp(2)$ denotes the compact quaternionic symplectic group of rank
two, whereas $\Sp(4,\CC)$ is indexed by its complex matrix size. The
generators have the indicated degrees, so $\Hc^4(G;\RR)=0$. The first identification
is the van Est isomorphism with the compact dual of $G/\Sp(2)$, and the
second is the cohomology calculation for compact symplectic groups
\cite{BorelWallach,MimuraToda}. Therefore the rank-two assertion of
\Cref{thm:main} is equivalent to injectivity of the comparison map
$\Hcb^4(G)\to\Hc^4(G)$.

\subsection{Invariant measurable cochains}
The homogeneous coboundary on functions of projective configurations is
\begin{equation}\label{eq:delta}
 (dc)(x_0,\ldots,x_{q+1})
 =\sum_{i=0}^{q+1}(-1)^i
 c(x_0,\ldots,\widehat{x_i},\ldots,x_{q+1}).
\end{equation}
We use the invariant measurable cochain complexes
\begin{equation}\label{eq:action-complexes}
 C_{\mathrm{mb}}^q(X)=L^\infty(X^{q+1})^G,
 \qquad
 C_{\mathrm m}^q(X)=L^0(X^{q+1})^G.
\end{equation}
Here $L^0$ denotes almost-everywhere finite measurable functions modulo
equality almost everywhere, and $L^\infty$ denotes essentially bounded
functions. Invariance is understood at the level of equivalence classes.
The unreduced cohomology spaces $\ker d/\operatorname{im}d$
are denoted by $\Hmb^q(G\curvearrowright X)$ and
$\Hm^q(G\curvearrowright X)$. The inclusion $L^\infty\subset L^0$ induces
the \emph{action comparison map} $c_X^q$. A cochain $f$ with $df=c$ is
called a primitive of $c$. For groups,
$\Hcb^q$ and $\Hc^q$ are defined by the same deletion differential on
invariant continuous functions on $G^{q+1}$, with and without the
boundedness requirement.

The required group-to-action identification and restriction maps are
consequences of the following external results.

\begin{proposition}[Projective reduction and stability]\label{prop:reduction}
There is a linear isomorphism
\begin{equation}\label{eq:projective-reduction}
 \Hcb^4(G;\RR)\cong\Hmb^4(G\curvearrowright X;\RR).
\end{equation}
For $r\ge2$, block inclusions induce injections
\[
 \begin{aligned}
 \Hcb^4(\Sp(2r+2,\CC))&\longrightarrow\Hcb^4(\Sp(2r,\CC)),\\
 \Hcb^4(\SO(2r+3,\CC))&\longrightarrow\Hcb^4(\SO(2r+1,\CC)).
 \end{aligned}
\]
\end{proposition}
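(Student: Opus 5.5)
The plan is to derive both parts of \Cref{prop:reduction} from standard external results in continuous bounded cohomology. Nothing specific to $\Sp(4,\CC)$ should be needed beyond the structure of the action on $X=\PP(V)$.

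\emph{Step 1: projective reduction.} I would use the Burger--Monod machinery. If a locally compact second countable group $G$ acts measurably on a standard measure space $S$ and the action is \emph{amenable}, then $L^\infty(S^{\bullet+1})$ is a strong resolution of $\RR$ by relatively injective Banach $G$-modules. Hence $\Hcb^\bullet(G;\RR)$ is computed by the invariant complex $L^\infty(S^{\bullet+1})^G$. The action of $G$ on $X=\PP(V)$ is not amenable, because the stabilizer of a line is a maximal parabolic subgroup $Q$, which is non-amenable. The standard remedy is Monod's theorem for actions with non-amenable stabilizers. The generalized version (Monod, ``Equivariant measurable liftings''; see also Bucher--Monod and the ``semi-amenable'' results) says that $L^\infty(X^{\bullet+1})^G$ computes $\Hcb^\bullet(G)$ in degrees up to $n$ provided the diagonal action on $X^{k}$ is amenable, or has amenable stabilizers almost everywhere, for every $k$ beyond a threshold. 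Concretely, I would check the following.
\begin{itemize}
\item Generic stabilizers of $G$ on $X^{k}$ are amenable, indeed compact-by-solvable, for $k$ large, say $k\ge 3$. Here three generic lines span a $3$-space whose Gram data fix a torus-type subgroup, and the stabilizer is an extension of a reductive group acting on the $\omega$-orthogonal complement.
\item The remaining low-degree terms are handled by Monod's spectral-sequence comparison: the isomorphism holds in degree $q$ as long as the $E_1$-terms involving non-amenable stabilizers contribute only in degrees $<q$ or vanish. This vanishing uses $\Hcb^{j}(\Stab)$ for small $j$, together with $\Hcb^1=0$ and $\Hcb^2$ of the stabilizers, which is computable from Burger--Monod since these groups have no Hermitian factors.
\end{itemize}
I would invoke exactly the statement that Blatz (or Monod) formulates for projective or Grassmannian actions of classical groups. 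This gives \eqref{eq:projective-reduction}.

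\emph{Step 2: stability.} For the injections I would cite Blatz's secondary (or Monod's) homological-stability theorem for continuous bounded cohomology of classical groups. In that framework the block inclusions $\Sp(2r,\CC)\hookrightarrow\Sp(2r+2,\CC)$ and $\SO(2r+1,\CC)\hookrightarrow\SO(2r+3,\CC)$ induce isomorphisms in degrees below a stability range, and injections at the edge of the range. I would verify that degree $4$ lies in the injective range once $r\ge2$. The proof mechanism is the spectral sequence of the action on the complex of isotropic (respectively nondegenerate) flags, or on tuples of vectors in general position. Its $E_1$-page consists of bounded cohomology of stabilizers, which are the smaller groups times amenable radicals, and the edge map is the restriction.

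\emph{Main obstacle.} The delicate point is Step 1. I must confirm that the non-amenability of stabilizers of low-order configurations in $X$ does not spoil degree $4$. This requires the precise form of the cited reduction theorem and its hypothesis, namely amenability of stabilizers of generic $k$-tuples for the relevant $k$. I would first try to verify directly that generic pairs and triples of lines in $(\CC^4,\omega)$ have amenable-modulo-$\Sp(2,\CC)$ stabilizers, and that the resulting $E_1$-correction terms vanish in total degree $4$ by the low-degree vanishing for $\SL(2,\CC)$ (Bucher--Savini in degree $4$, and Burger--Monod in degrees $\le 3$).
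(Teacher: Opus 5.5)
Your Step 2 agrees with the paper, which quotes Blatz's Theorem~D and Corollary~A for the injections. The gap is in Step 1. The paper does not use an ``amenable stabilizers beyond a threshold'' criterion. Such a criterion says nothing in a fixed degree, because the non-amenable low columns are the whole difficulty. Instead the paper quotes Blatz's Lemma~6.2.8 in kernel form: since every line is isotropic, it identifies $\ker\bigl(\Hcb^4(\Sp(2r+2,\CC))\to\Hcb^4(\Sp(2r,\CC))\bigr)$ with $\Hmb^4$ of the action on $\PP^{2r+1}(\CC)$, for $r\ge1$. For $r=1$ the kernel is all of $\Hcb^4(G)$, because $\Hcb^4(\SL(2,\CC))=0$. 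That vanishing comes from Bucher--Savini via $\mathrm{PSL}(2,\CC)\cong\operatorname{Isom}^\circ(\mathbb H^3)$ and $\Hc^4(\SL(2,\CC))=0$, plus invariance under finite central quotients. Your spectral-sequence mechanism is the one behind Blatz's lemma, but your direct verification of it rests on a false input.

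Burger--Monod gives $\Hcb^2(\SL(2,\CC))=0$, but $\Hcb^3(\SL(2,\CC))\cong\RR$, spanned by the bounded volume class of ideal tetrahedra in $\mathbb H^3$. Let $Q$ be the line stabilizer and $L\cong\CC^\times\times\SL(2,\CC)$ the pair stabilizer. In $E_1^{p,q}=\Hcb^q(G;L^\infty(X^p))$, after dividing out amenable radicals, $E_1^{1,3}=\Hcb^3(Q)\cong\RR$ and $E_1^{2,3}=\Hcb^3(L)\cong\RR$. Both interfere with degree four:
\begin{itemize}
\item Anything surviving in $E_2^{1,3}$ can only die through $d_4\colon E_4^{1,3}\to E_4^{5,0}$. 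It would then produce part of $\Hmb^4(G\curvearrowright X)$ not coming from $\Hcb^4(G)$.
\item $E_2^{2,3}$ is the target of $d_2\colon E_2^{0,4}\to E_2^{2,3}$. A nonzero $d_2$ would cut down the part of $\Hcb^4(G)$ that reaches $d_5$.
\end{itemize}
To close the argument you need three facts besides $\Hcb^4(\SL(2,\CC))=0$:
\begin{itemize}
\item The two face restrictions $\Hcb^3(Q)\to\Hcb^3(L)$ coincide, since both factor through the projection $L\to\SL(2,\CC)$. Hence $d_1^{1,3}=0$.
\item The restriction $\Hcb^3(\Sp(4,\CC))\to\Hcb^3(\SL(2,\CC))$ is surjective, i.e.\ $\xi_3$ has a bounded representative.
\item $d_2^{0,4}=0$. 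One way is comparison with the continuous spectral sequence, where $\Hc^4(G)=0$ and $\Hcb^3(L)\to\Hc^3(L)$ is injective.
\end{itemize}
These are exactly the points absorbed into the cited lemma. As written, your Step 1 does not establish $\Hcb^4(G)\cong\Hmb^4(G\curvearrowright X)$.
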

\begin{proof}
Set $G_r=\Sp(2r,\CC)$ and $P_r=\PP^{2r-1}(\CC)$. Every complex line
is isotropic for an alternating form. Blatz's
\cite[Lemma~6.2.8, pp.~115--116]{Blatz} therefore gives
\[
 \ker\bigl(\Hcb^4(G_{r+1})\to\Hcb^4(G_r)\bigr)
 \cong\Hmb^4(G_{r+1}\curvearrowright P_{r+1}),\qquad r\ge1.
\]
For $r=1$, the rank-one input
$\Hcb^4(\SL(2,\CC))=0$ follows from Bucher--Savini's degree-four
comparison theorem for real-hyperbolic isometry groups
\cite[Theorem~5]{BucherSavini25}, also recorded in
\cite[Theorem~2.5.5, p.~57]{Blatz}: use
$\mathrm{PSL}(2,\CC)\cong\operatorname{Isom}^{\circ}(\mathbb H^3)$,
$\Hc^4(\SL(2,\CC))=H^4(S^3;\RR)=0$, and invariance of bounded
cohomology under finite central quotients \cite{MonodBook}.
This proves \eqref{eq:projective-reduction}. The injections are
\cite[Theorem~D and Corollary~A, p.~13]{Blatz}.
\end{proof}

The point stabilizer has a Levi subgroup
$\CC^\times\times\SL(2,\CC)$, with nonamenable semisimple factor
$\SL(2,\CC)$. Thus the projective action is not amenable.
The degree-four identification \eqref{eq:projective-reduction} is the
$d_5$ transgression in Blatz's bounded projective spectral sequence;
the $d_2$ in \Cref{sec:ordinary} belongs to the measurable double complex.
The restriction injections in \Cref{prop:reduction} start above rank two.

\subsection{Outline of the proof}
By \Cref{prop:reduction}, it suffices to prove
$\Hmb^4(G\curvearrowright X)=0$. Consider the comparison map
\[
 \Hmb^4(G\curvearrowright X)
 \xrightarrow{\ c_X^4\ }
 \Hm^4(G\curvearrowright X).
\]
Its injectivity follows from the estimate
\begin{equation}\label{eq:intro-bound}
 \|f\|_\infty\le7\|Df\|_\infty
\end{equation}
for alternating measurable functions $f$ on the four-point quotient;
$D$ is the five-term operator defined in \Cref{sec:coordinates}.

The measurable target satisfies
\begin{equation}\label{eq:intro-target}
 \Hm^4(G\curvearrowright X;\RR)\cong
 \Hom_{\RR}(\CC,\RR)\cong\RR^2.
\end{equation}
The real and imaginary parts of an explicit rational cocycle $\cR$
represent a basis. Its periods on a family of finite orbit cycles have a
second-order pole, whereas the cycles have uniformly bounded
$\ell^1$-mass. The period estimate of \Cref{thm:period-test} therefore
excludes bounded representatives of every nonzero class in
$\Hm^4(G\curvearrowright X)$.

Regularization makes the special-configuration identities available
pointwise in the proof of \eqref{eq:intro-bound}. For the period estimate,
a two-cone construction replaces finite orbit cycles by absolutely
continuous configuration measure cycles, with controlled total variation.

\Cref{sec:coordinates} develops the Gram model, and
\Cref{sec:bounded} proves the bounded-primitive estimate.
\Cref{sec:ordinary} computes the measurable target. The rational cocycle,
orbit cycles, and averaging construction occupy
\Cref{sec:cocycle,sec:period-test}. \Cref{sec:completion} proves the
vanishing theorem; the period calculation is recorded in the appendix.

\subsection{Measure conventions}
A set is \emph{conull} if its complement has measure zero. On each smooth
manifold we use the measure class of a positive smooth density; on a
product we use the product class. A map is \emph{nonsingular} if the
inverse image of every null set is null. Pullback along such a map is
well defined on $L^0$ and $L^\infty$ classes. Smooth submersions are
nonsingular, by Fubini in local projection coordinates. All manifolds and
Lie groups below are second countable. A standard measure space here is
a standard Borel space with a nonzero $\sigma$-finite measure class;
we choose an equivalent probability measure when forming $L^0$.
All cochains and chains have real coefficients unless complex values
are explicitly specified. Every cohomology group is unreduced.

\section{Symplectic Gram coordinates}\label{sec:coordinates}

Normalized symplectic Gram matrices describe the generic configuration
orbits and identify invariant measurable cochains with functions on the
corresponding smooth quotients.

\subsection{Generic configurations and normalization}
For $n\ge4$, let $\Omega_n\subset X^n$ consist of ordered tuples
$(\ell_0,\ldots,\ell_{n-1})$ such that $\omega(v_i,v_j)\ne0$ for every $i\ne j$ and
every four lines span $V$, where $v_i\in\ell_i\setminus\{0\}$. For $n=3$ require
pairwise nonzero pairings and linear independence; for $n=2$ require a
nonzero pairing; put $\Omega_1=X$. These are nonempty Zariski-open,
conull sets, preserved by deleting vertices. We call their tuples
\emph{generic}.

Choose nonzero lifts $v_i\in\ell_i$ and form the skew-symmetric matrix
$A=(a_{ij})$, where $a_{ij}=\omega(v_i,v_j)$. Changing the lifts replaces
$A$ by $DAD$ for an invertible diagonal matrix $D$. If $n\ge4$, the
matrix has rank four, all its off-diagonal entries are nonzero, and all
principal $4\times4$ submatrices are nonsingular. Conversely these
conditions characterize Gram matrices of generic tuples.

On every such diagonal-congruence class there is a unique normalized
matrix satisfying
\begin{equation}\label{eq:normalization}
 a_{0j}=1\quad(1\le j<n),\qquad a_{12}=1.
\end{equation}
Indeed, normalizing factors satisfy
$d_j=(d_0a_{0j})^{-1}$ and $d_0^2=a_{12}/(a_{01}a_{02})$.
The two choices of square root differ by a common sign and produce the
same matrix. Denote the normalized locus by $\cU_n$ and the resulting
map by $\gamma_n:\Omega_n\to\cU_n$.

\begin{lemma}[Descent along a translation factor]\label{lem:descent}
Let $J$ be a second countable locally compact group and $Y$ a standard
measure space. For the left-translation action on $J\times Y$,
\[
 L^0(J\times Y)^J\cong L^0(Y).
\]
The identification is pullback by projection, is a homeomorphism for
convergence in measure, and is isometric on $L^\infty$. The same conclusion holds on a countable cover by equivariant product
charts over a common base with trivial action.
\end{lemma}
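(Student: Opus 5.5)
The plan is to construct the inverse of the pullback map directly. Write $p:J\times Y\to Y$ for the projection and $p^*$ for pullback, so $p^*(h)=h\circ p$.

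\textbf{Well-definedness and injectivity of $p^*$.} The projection $p$ is nonsingular, since by Fubini the preimage $J\times N$ of a null set $N$ is null. Hence $p^*:L^0(Y)\to L^0(J\times Y)$ is well defined. Its image consists of invariant classes.

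It is injective: if $h\circ p=0$ almost everywhere, then by Fubini $h=0$ almost everywhere on $Y$. Fix a left Haar measure $m_J$ and choose an equivalent probability measure $\mu$ on $Y$. Replacing $m_J$ by an equivalent probability measure $\nu$, we have
\[
 \mu_{J\times Y}(p^*h\ne p^*h')=\mu(h\ne h').
\]
For the metric of convergence in measure, built from $\min(1,|\cdot|)$, the map $p^*$ is then isometric, hence a homeomorphism onto its image. It is also isometric for the $L^\infty$ norms, because $\operatorname{ess\,sup}|h\circ p|=\operatorname{ess\,sup}|h|$ by Fubini.

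\textbf{Surjectivity onto invariant classes.} This is the main step. Let $F\in L^0(J\times Y)^J$ and pick a Borel representative, also called $F$. For each $g\in J$, invariance says $F(gj,y)=F(j,y)$ for almost every $(j,y)$, where the null set depends on $g$. Consider the Borel set
\[
 E=\{(g,j,y):F(gj,y)\ne F(j,y)\}\subset J\times J\times Y.
\]
By Fubini, $E$ is null. Applying Fubini again, for almost every $y$ the slice $E_y=\{(g,j):F(gj,y)\ne F(j,y)\}$ is null in $J\times J$.

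Fix such a $y$. The change of variables $(g,j)\mapsto(gj,j)$ preserves the Haar measure class on $J\times J$; this uses only that left translation preserves Haar measure together with Fubini. So for almost every pair $(k,j)$ we have $F(k,y)=F(j,y)$. It follows that $j\mapsto F(j,y)$ is almost everywhere equal to a constant $h(y)$.

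To make $h$ measurable, define
\[
 h(y)=\int_J \arctan F(j,y)\,d\nu(j),
\]
then apply $\tan$. This is Borel in $y$ by Fubini, and it equals the almost-everywhere constant value wherever that constant exists. Then $F=p^*h$ almost everywhere by Fubini. If $F$ is bounded, so is $h$.

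\textbf{Extension to charts.} The extension to a countable cover by equivariant product charts $J\times Y_\alpha$ over a common base with trivial action is a gluing argument. First refine the cover to a countable disjoint Borel partition by invariant pieces $J\times Y'_\alpha$. The partition pieces are of the form (chart) minus (earlier charts), and these remain invariant because each chart is invariant. Then apply the result on each piece and assemble the descended functions. Measurability and the metric statements follow because countable disjoint unions are compatible with both $L^0$ convergence and $L^\infty$ norms.

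\textbf{Expected obstacle.} The main care point is the measure-theoretic passage from invariance "for each $g$ almost everywhere" to an almost-everywhere constant on almost every fiber. Second countability is used here, to guarantee that Haar measure is $\sigma$-finite and that the spaces are standard, so that Fubini applies.
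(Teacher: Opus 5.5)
Your proposal is correct and follows essentially the same route as the paper. The shared steps are: a Borel representative; Fubini applied to the three-variable invariance set; the shear $(g,j)\mapsto(gj,j)$ to get essential constancy on almost every fibre; a bounded reparametrization, your $\arctan$ in place of the paper's $F/(1+|F|)$, integrated against a Haar-equivalent probability measure to produce a measurable descended function; product probability measures for the metric statements; and a disjoint partition of the base for the chart version.

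One minor correction: for fixed $j$ the map $g\mapsto gj$ is a \emph{right} translation. So the fact that the shear preserves the Haar measure class is most directly justified by right quasi-invariance of Haar measure (the modular function), not by left invariance alone.
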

\begin{proof}
Choose a jointly Borel representative $F$ of an invariant class and
put $u=F/(1+|F|)$. For each $h\in J$,
$u(hg,y)=u(g,y)$ for almost every $(g,y)$. Joint measurability and Fubini
make this an almost-everywhere equality in $(h,g,y)$. The change of
variables $(h,g)\mapsto(hg,g)$ preserves the product Haar measure class,
so $u(k,y)=u(g,y)$ for almost every $(k,g,y)$. Hence, for almost every
$y$, the function $u(\,\cdot\,,y)$ is essentially constant.

Integrate $u$ in the first variable against a fixed Haar-equivalent
probability measure. This gives a measurable essential constant $c(y)$.
It lies in $(-1,1)$ almost everywhere, and
$F(g,y)=c(y)/(1-|c(y)|)$ almost everywhere. This proves surjectivity;
injectivity is Fubini. With product probability measures, pullback
preserves both the metric $\int\min(1,|f-g|)$ and the essential supremum.
For a countable equivariant local-product cover, the descended functions
agree almost everywhere on overlaps. Partitioning the base into
disjoint measurable pieces and using weighted sums of probability
measures gives the global identifications.
\end{proof}

\begin{proposition}[Gram quotient]\label{prop:coordinates}
For $n\ge4$, the fibres of $\gamma_n$ are exactly the $G$-orbits. The
space $\cU_n$ is a smooth Zariski-open subset of $\CC^{3n-10}$, and
$\gamma_n$ is a smooth submersion admitting local product descriptions
\begin{equation}\label{eq:product-chart}
 \gamma_n^{-1}(W)\cong G_{\eff}\times W,
 \qquad G_{\eff}=G/\{\pm I\}.
\end{equation}
These descriptions preserve smooth measure classes. Consequently,
\[
 L^0(\cU_n)\cong L^0(X^n)^G,
 \qquad L^\infty(\cU_n)\cong L^\infty(X^n)^G,
\]
with the latter identification isometric. Deletion induces smooth rational
submersions $\partial_i:\cU_{n+1}\to\cU_n$; permutations induce smooth
rational automorphisms. Both act by nonsingular pullback on cochains.
\end{proposition}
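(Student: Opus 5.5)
The plan is to start with the fibre statement, which is linear algebra (a Witt-type extension argument). If two generic tuples have the same normalized Gram matrix, then after rescaling lifts we may assume the Gram matrices $(\omega(v_i,v_j))$ and $(\omega(w_i,w_j))$ coincide. Since $v_0,\ldots,v_3$ span $V$ and the principal $4\times4$ block is nonsingular, the linear map $g$ with $gv_i=w_i$ for $i\le3$ is symplectic. For $j\ge4$, write $v_j=\sum_{i\le3}c_iv_i$; the coefficients $c_i$ are determined by the pairings $\omega(v_j,v_i)$ via the nonsingular block. Hence $gv_j=w_j$ as well, and $g\ell_j=\ell'_j$ for all $j$. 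The converse, that $G$-translates have equal normalized matrices, is immediate. The normalization is unique up to the common sign $D\mapsto -D$, which fixes $DAD$.

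Next I would count dimensions and describe $\cU_n$. Skew $n\times n$ matrices of rank at most four that have a nonsingular leading $4\times4$ block are parametrized smoothly by that block ($6$ entries) together with the coefficient vectors $c^{(j)}\in\CC^4$ for $j\ge4$. This gives dimension $6+4(n-4)=4n-10$, and it is exactly the Grassmann-type description $A=B^{T}\Omega B$ with $B$ a $4\times n$ matrix modulo $\Sp(4)$. Quotienting by the diagonal torus of dimension $n$, which acts with kernel $\pm1$, leaves $3n-10$. Concretely, $\cU_n$ is the locus of this smooth variety cut out by $a_{0j}=1$ and $a_{12}=1$. I would present it as the image of $(\text{entries }a_{ij},\ 1\le i<j\le3,\ (i,j)\ne(1,2))$ together with the coordinates of $v_j$, suitably normalized, for $j\ge4$. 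Equivalently, I would identify $\cU_n$ with a Zariski-open subset of an affine space via the rational parametrization $B$ normalized by the conditions (eq. normalization); the open conditions are the nonvanishing of entries and of principal $4\times4$ Pfaffians. Smoothness is then automatic.

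For the product charts, I would build a local section. On a small $W\subset\cU_n$, choose smoothly a normalized lift $(v_0,\ldots,v_{n-1})$ realizing the Gram matrix $A\in W$: take a fixed symplectic basis adapted to the first $4\times4$ block, varying smoothly, and define $v_j$ through the coefficients $c^{(j)}$. The map $G\times W\to\Omega_n$, $(g,A)\mapsto(g[v_i(A)])_i$, descends to $G_{\eff}\times W$. It is injective because the stabilizer of a generic tuple consists of symplectic maps acting diagonally on the basis $v_0,\ldots,v_3$ with scalars $\lambda_i$ satisfying $\lambda_i\lambda_j=1$ for all $i\ne j$, which forces $\lambda_i=\pm1$ all equal. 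It is surjective onto $\gamma_n^{-1}(W)$ by the first step. The dimensions agree: $\dim_\CC G=10$, and $10+3n-10=3n=\dim X^n$. The differential is therefore bijective and the map is a diffeomorphism, which in particular preserves smooth measure classes. The $L^0$ and $L^\infty$ identifications then follow from \Cref{lem:descent} applied to a countable cover by such charts, with $\Omega_n$ conull.

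Finally, deletion and permutation commute with $G$ and preserve genericity, so they descend to maps on $\cU_n$. These are computed by renormalizing the submatrix, which is rational in the entries with square roots cancelling as noted earlier. Deletion is a submersion because in the charts it corresponds to $G_{\eff}\times W\to G_{\eff}\times W'$, a projection of the section data. Nonsingular pullback then follows from the measure conventions. The main obstacle I expect is the global, rather than local, smooth rational description of $\cU_n$ as an open subset of $\CC^{3n-10}$. In particular, one must choose coordinates in which the normalization conditions and the rank-four condition become explicit. I would first try the coordinates $a_{03}$-free ones, $(a_{13},a_{23})$ together with the normalized coefficient data of $v_j$ for $j\ge4$ with respect to $v_0,\ldots,v_3$ (three free parameters per vertex after the $a_{0j}=1$ constraint), checking the count $2+3(n-4)=3n-10$.
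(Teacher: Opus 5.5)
Your route matches the paper's: orbit classification via the symplectic map on the first four vectors, a local section built from a smoothly varying realization of the leading block, the map $\Psi(g,A)=g\sigma(A)$ on $G_{\eff}\times W$, and \Cref{lem:descent}. The gap is where you go from ``$\Psi$ is bijective onto $\gamma_n^{-1}(W)$ and both sides have dimension $3n$'' to ``the differential is therefore bijective''. That inference fails for smooth maps. A smooth bijection such as $t\mapsto t^3$ need not be a local diffeomorphism, and smooth bijections need not even preserve null sets. So as written, neither the product charts, nor submersivity of $\gamma_n$, nor the measure-class statement is established.

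The paper proves injectivity of $d\Psi$ directly. The map $\gamma_n$ is smooth, since normalized entries are ratios of pairings, and $\gamma_n\circ\Psi=\mathrm{pr}_W$. Hence a vector in $\ker d\Psi$ has zero $W$-component. Its group component then lies in the Lie algebra of the stabilizer of $g\sigma(A)$. That Lie algebra is zero because the stabilizer is the discrete group $\{\pm I\}$, which is exactly what your $\lambda_i\lambda_j=1$ computation shows. Alternatively, take $\sigma$ holomorphic and quote the theorem that an injective holomorphic map between complex manifolds of equal dimension is biholomorphic onto its image; your dimension count then suffices.

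Two smaller points. First, calling deletion ``a projection of the section data'' in the charts is right only for $i\ge4$, with compatibly chosen sections. For $i\le3$ the leading block changes and renormalization is needed. In the charts, deletion is then $(g,A)\mapsto(g\,h(A),\partial_iA)$ with an $A$-dependent $h(A)$, and submersivity of $\partial_i$ is precisely what must be shown. Use instead $\gamma_n\delta_i=\partial_i\gamma_{n+1}$: the left side is a composite of submersions, so $d\partial_i$ is onto.

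Second, the global chart you flag as the main obstacle is immediate. With $A_0$ nonsingular, rank four is exactly the Schur-complement condition $C=-B^{\mathsf T}A_0^{-1}B$. So a normalized matrix is determined by $(a_{13},a_{23})$ and the columns $(a_{1j},a_{2j},a_{3j})$, $j\ge4$, subject to open polynomial conditions. Your coefficient coordinates $c^{(j)}=A_0^{-1}b_j$, with the affine constraint coming from $a_{0j}=1$, are an equivalent choice, so that part of your plan goes through.
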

\begin{proof}
Suppose first that two normalized Gram matrices agree. After rescaling
the lifts, the first four vectors in each tuple have identical Gram
matrices. The unique linear map between these ordered bases is
symplectic. Every further vector is determined by its pairings with the
basis, so the same map carries the whole first tuple to the second.
Conversely, a symplectic map preserves the Gram matrix. Every skew matrix
with the stated conditions is realized by taking the images of the
standard basis of $\CC^n$ in the four-dimensional symplectic quotient
$\CC^n/\ker A$. This proves the orbit classification and surjectivity.

Write a normalized matrix in blocks as
\[
 A=\begin{pmatrix}A_0&B\\-B^{\mathsf T}&C\end{pmatrix},
 \qquad b_j=(1,a_{1j},a_{2j},a_{3j})^{\mathsf T}\quad(j\ge4),
\]
where $A_0$ is the leading $4\times4$ block. The rank-four condition is
exactly $C=-B^{\mathsf T}A_0^{-1}B$. Thus $A$ is determined by the two
free entries of $A_0$ and three entries for each further column.
The remaining nonvanishing conditions are open algebraic conditions.
This gives the asserted smooth structure and dimension.

Fix a matrix $J$ for $\omega$. The map $S\mapsto S^{\mathsf T}JS$
from $\mathrm{GL}_4(\CC)$ to nonsingular skew matrices is a surjective
submersion, hence has local smooth sections. Choose such an $S(A_0)$.
Its columns, followed by the vectors $S(A_0)A_0^{-1}b_j$, realize $A$:
their further pairings are $-b_j^{\mathsf T}A_0^{-1}b_k$.
Projectivization gives a local section $\sigma:W\to\Omega_n$.

The effective action on $\Omega_n$ is free. Indeed, if $gv_i=\lambda_i
v_i$ on four generic lines, then all $\lambda_i\lambda_j=1$, so all
$\lambda_i$ are the same sign. Consider
$(g,A)\mapsto g\sigma(A)$. It is bijective by the orbit classification.
Its differential is injective: applying $d\gamma_n$ first kills any
possible base component, and freeness then kills the group component.
Both manifolds have complex dimension $10+(3n-10)=3n$, so the map is a
bijective local diffeomorphism. This proves \eqref{eq:product-chart}.

A diffeomorphism preserves smooth measure classes. In these charts the
action is translation on the first factor; \Cref{lem:descent}, followed
by a countable chart cover, proves the assertions about function spaces.
The complement of $\Omega_n$ in $X^n$ is null and can be discarded.

Finally, deletion $\delta_i:\Omega_{n+1}\to\Omega_n$ is a submersion
and satisfies
\begin{equation}\label{eq:face-naturality}
 \gamma_n\delta_i=\partial_i\gamma_{n+1}.
\end{equation}
The left side is submersive, so $d\partial_i$ is surjective. All
normalized entries are ratios of pairings in which the square roots
cancel, proving rationality. Permutation and normalization give the
assertion for permutations. Nonsingularity follows from submersivity.
\end{proof}

\subsection{Four- and five-point quotients}
The Pfaffian of a $4\times4$ skew matrix is
$a_{01}a_{23}-a_{02}a_{13}+a_{03}a_{12}$; its square is the determinant.
For a normalized quadruple we therefore obtain
\begin{equation}\label{eq:U4}
 A(x,y)=\begin{pmatrix}
 0&1&1&1\\-1&0&1&x\\-1&-1&0&y\\-1&-x&-y&0
 \end{pmatrix},\qquad
 \cU_4=\{(x,y):xy(1-x+y)\ne0\}.
\end{equation}
For an unnormalized four-point Gram matrix $Q$, the quotient coordinates are
\begin{equation}\label{eq:chi}
 \chi(Q)=\left(\frac{q_{13}q_{02}}{q_{12}q_{03}},
                    \frac{q_{23}q_{01}}{q_{12}q_{03}}\right).
\end{equation}

For five points the normalized matrix is
\begin{equation}\label{eq:B}
 B(u,v,w,x,y)=\begin{pmatrix}
 0&1&1&1&1\\-1&0&1&u&v\\-1&-1&0&w&x\\
 -1&-u&-w&0&y\\-1&-v&-x&-y&0
 \end{pmatrix}.
\end{equation}
The space $\cU_5$ is the open subset of $(\CC^\times)^5$ where
\begin{equation}\label{eq:five-pfaffians}
 y-ux+vw,\quad y-x+w,\quad y-v+u,\quad x-v+1,\quad w-u+1
\end{equation}
are all nonzero. These are the five principal $4\times4$ Pfaffians, in
the order of the deleted vertex. Their four-point quotient coordinates
are
\begin{equation}\label{eq:five-faces}
 \left(\frac{ux}{vw},\frac{y}{vw}\right),\quad
 \left(\frac{x}{w},\frac{y}{w}\right),\quad
 \left(\frac vu,\frac yu\right),\quad(v,x),\quad(u,w).
\end{equation}
It follows that the coboundary from degree three to degree four is
\begin{equation}\label{eq:D}
 \begin{split}
 (Df)(u,v,w,x,y)={}&f\left(\frac{ux}{vw},\frac{y}{vw}\right)
 -f\left(\frac{x}{w},\frac{y}{w}\right)
 +f\left(\frac vu,\frac yu\right)\\
 &-f(v,x)+f(u,w).
 \end{split}
\end{equation}
The equality $Df=0$ in $L^0(\cU_5)$ means that this five-term
identity holds almost everywhere. For continuous $f$, it then holds
pointwise on $\cU_5$.

\subsection{The six-point Pfaffian relation}
Write the normalized six-point Gram matrix as
\begin{equation}\label{eq:six-matrix}
 C=\begin{pmatrix}
 0&1&1&1&1&1\\-1&0&1&u&v&s\\-1&-1&0&w&x&t\\
 -1&-u&-w&0&y&z\\-1&-v&-x&-y&0&r\\-1&-s&-t&-z&-r&0
 \end{pmatrix}.
\end{equation}
The space $\cU_6$ is given by $\Pf C=0$, nonzero off-diagonal entries,
and nonzero principal $4\times4$ Pfaffians. Explicitly,
\begin{equation}\label{eq:Pi}
 \begin{split}
 \Pf C={}&-ru+rw+r-sw+sx-sy+tu-tv+ty\\
         &-ux+vw+vz-xz+y-z.
 \end{split}
\end{equation}
The coefficient $w-u+1$ of $r$ is nonzero, so this is a smooth
open subset of $\CC^8$ after eliminating $r$. The next coboundary is
\begin{equation}\label{eq:E}
 \begin{split}
 (EF)(C)={}&F\left(\frac{ux}{vw},\frac{ut}{ws},
                  \frac{y}{vw},\frac{z}{ws},\frac{ru}{vws}\right)\\
 &-F\left(\frac{x}{w},\frac{t}{w},\frac{y}{w},\frac{z}{w},\frac{r}{w}\right)
 +F\left(\frac vu,\frac su,\frac yu,\frac zu,\frac ru\right)\\
 &-F(v,s,x,t,r)+F(u,s,w,t,z)-F(u,v,w,x,y).
 \end{split}
\end{equation}
Each summand is the normalized five-point face of $C$. The face identities
imply $ED=0$, and \Cref{prop:coordinates,prop:reduction} give
\begin{equation}\label{eq:exact-model}
 \Hcb^4(G)\cong\Hmb^4(G\curvearrowright X)
 \cong\frac{\ker(E:L^\infty(\cU_5)\to L^\infty(\cU_6))}
 {D(L^\infty(\cU_4))}.
\end{equation}
Nonsingularity of the face maps makes these operators well defined on
measurable equivalence classes.

\section{Bounded primitives and comparison injectivity}\label{sec:bounded}

The following estimate for $D$ implies injectivity of the action
comparison map.

\begin{theorem}[Bounded primitives]\label{thm:bounded-primitives}
Let $f\in L^0(\cU_4)$ be alternating under permutation of the four
projective points. If $Df\in L^\infty(\cU_5)$, then
\begin{equation}\label{eq:seven}
 \|f\|_\infty\le7\|Df\|_\infty.
\end{equation}
If $f$ is continuous, then the sharper estimate
$\|f\|_\infty\le\|Df\|_\infty$ holds.
\end{theorem}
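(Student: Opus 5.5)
The strategy is to produce, for a generic quadruple $p=(x,y)\in\cU_4$, an explicit \emph{special} five-point configuration whose faces are only $p$ and permuted copies of $p$. The alternating property then collapses $Df$ at that configuration to a small integer multiple of $f(p)$.

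Concretely, consider a normalized Gram matrix $B(u,v,w,x',y')$ in which some faces, via \eqref{eq:five-faces}, coincide with $(x,y)$ or with images of $(x,y)$ under the permutation action of $S_4$ on $\cU_4$. Those permutation images are rational automorphisms obtained from \eqref{eq:chi} by permuting indices, and the alternating hypothesis gives $f(\sigma p)=\sgn(\sigma)f(p)$. A natural first attempt is a fifth point that makes the configuration symmetric, say the fifth line repeating the role of one of the first four after an involution. In that case the five faces are $p$, $\sigma_1p,\dots$, and \eqref{eq:D} gives $(Df)(\text{special point})=k\,f(p)$ for some nonzero integer $k$. For continuous $f$, where $Df$ is pointwise bounded, I expect a configuration with $k=\pm1$ up to adding one more term, giving $|f(p)|\le\|Df\|_\infty$ at every $p$. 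I will therefore search for a one-parameter subfamily of $\cU_5$, solving the face equations rationally in $(x,y)$, on which the five faces reduce to $\pm p$ together with faces that cancel pairwise by alternation.

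For measurable $f$ the special configurations lie in a null subvariety of $\cU_5$, so $Df$ cannot be evaluated there. Here I would regularize: average $f$ (or rather the identity for $Df$) over small neighbourhoods. I plan to replace the single special point by a short chain of at most seven applications of the five-term identity at \emph{generic} points. The first application expresses $f(p)$ through $Df$ at a generic point plus four other values of $f$; subsequent applications, chosen along submersive parametrizations via the face maps $\partial_i$ of \Cref{prop:coordinates}, eliminate those values. Nonsingularity makes each substitution legitimate almost everywhere, by Fubini over the free parameters. The constant $7$ would then be the number of coboundary evaluations in the telescoping identity
\[
 f(p)=\sum_{j=1}^{7}\pm(Df)(P_j(p,t))\qquad\text{for a.e. }(p,t),
\]
where each $P_j$ is a submersion in the auxiliary parameters $t$. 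Integrating in $t$ and taking the essential supremum in $p$ gives \eqref{eq:seven}.

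The main obstacle is finding this generic telescoping identity: one needs a family of five-point configurations, depending submersively on extra parameters, for which all unwanted $f$-terms cancel by alternation and by repetition of faces. I would try first to take the six-point relation \eqref{eq:E} and its face structure as a guide. The idea is to combine the five-term identities at faces of a six-point configuration, so that $ED=0$ produces the cancellations, and then to count the surviving $Df$ terms. I would finish by checking on a computer algebra system that each $P_j$ is a submersion onto an open subset of $\cU_5$.
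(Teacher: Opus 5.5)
\textbf{There is a genuine gap at the central step.} In the continuous case everything depends on a configuration in $\cU_5$ whose faces are $\pm p$ or cancel in pairs, so that $Df$ there equals $kf(p)$ with $k\neq0$. You do not construct one, and the natural candidates do not collapse. The paper's special configurations $\Phi_j$ of \eqref{eq:Phi} use the odd-fixed loci \eqref{eq:odd-fixed} and the symmetry $\rho$ in exactly this way. Even so, they give only $(Df)(\Phi_j(z))=2f(z)-f(T_jz)$, with a residual face at $T_jz$ whose second logarithmic radius is doubled.

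Your phrase ``up to adding one more term'' is where the real work lies. An inequality $|kf(p)\pm f(q)|\le M$ says nothing about $f(p)$ unless $f(q)$ is already controlled. The missing idea is the following chain of steps.
\begin{enumerate}
\item Iterate the doubling identity to get $|2^nf(z)-f(T_1^nz)|\le(2^n-1)M$.
\item Show that $f(T_1^nz)$ grows only polynomially. Subtracting the two doubling identities gives $|f(T_1z)-f(T_2z)|\le2M$. This is a correspondence acting on $\mathbf L$ by the shear $\mathbf U$.
\item Together with the even symmetry $\sC$, this shear generates the dyadic shears and $\mathbf R=\diag(2,1/2)$. So $T_1^nz$ can be moved into a fixed compact subset of $\cU_4$ by $O(n^2)$ admissible steps. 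This needs the generic-lift bookkeeping of \Cref{lem:words} and the divisor avoidance of \Cref{lem:correction}.
\item Continuity bounds $f$ on that compact set. Dividing by $2^n$ yields $|f(z)|\le M$.
\end{enumerate}
So continuity is used essentially, not only to evaluate $Df$ on a null subvariety.

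\textbf{The measurable case has the same problem.} You propose an identity $f(p)=\sum_{j=1}^7\pm(Df)(P_j(p,t))$ holding almost everywhere for every measurable $f$. That would force the faces of the $P_j(p,t)$ to cancel pointwise except for $p$. In other words, it would give a filling $[p]=\partial c$ by seven generic simplices at almost every $(p,t)$. Such a filling would yield $|f(p)|\le7\sup|Df|$ for every pointwise alternating function, measurable or not, and would kill every pointwise alternating $3$-cocycle. That is a far stronger algebraic statement than the theorem, and you give no evidence for it.

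Your suggested guide cannot produce it either. The relation $ED=0$ says that the signed sum of $Df$ over the faces of a six-point configuration vanishes because all $f$-terms cancel. It yields relations among values of $Df$, never an isolated $f(p)$.

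The constant $7$ is also not a count of simplices; in the paper it is $1+6$.
\begin{enumerate}
\item After showing $f\in L^\infty_{\mathrm{loc}}$, integrate the sliced identity \eqref{eq:sliced-D} over the fibres of $(u,v,w,x,y)\mapsto(u,w)$ against a properly supported smooth probability density. This gives a smooth $Sf$ with $\|f-Sf\|_\infty\le M$.
\item Then $h=\Alt_4Sf$ is smooth and alternating, with $\|f-h\|_\infty\le M$ and $\|Dh\|_\infty\le M+5M$.
\item The continuous estimate gives $\|h\|_\infty\le6M$, hence $\|f\|_\infty\le7M$.
\end{enumerate}
Your remark about averaging the identity for $Df$ is the right instinct. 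It should be used to reduce to the continuous case, not to look for an exact telescoping formula.
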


For continuous primitives, special configurations yield doubling
identities, while finite correspondences reduce their iterates to a
compact set with a polynomial error bound. Fibrewise regularization
then extends the estimate to measurable primitives.

\subsection{Alternation in Gram coordinates}
For a cochain of $m$ variables, define
\[
 \Alt_m c(x_0,\ldots,x_{m-1})
 =\frac1{m!}\sum_{\sigma\in S_m}\sgn(\sigma)
 c(x_{\sigma(0)},\ldots,x_{\sigma(m-1)}).
\]
This projection commutes with $d$ on $L^0$ and $L^\infty$, and has
operator norm at most one on $L^\infty$. Its image is the subcomplex of alternating cochains.

\begin{samepage}
\begin{lemma}[Degree-four alternation]\label{lem:bounded-alternation}
The inclusion of alternating bounded action cochains induces an isometric
isomorphism in degree four. More explicitly, for every bounded action
four-cocycle $F$ there is a bounded invariant three-cochain $b_{\mathrm N}$
such that
\[
 F-\Alt_5F=db_{\mathrm N},\qquad \Alt_4b_{\mathrm N}=0.
\]
Thus the degree-four cohomology of the non-alternating subcomplex
$\ker\Alt$ vanishes.
\end{lemma}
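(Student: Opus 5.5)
We would use the standard chain homotopy between the identity and the alternation projection on the homogeneous complex, and check that it preserves boundedness, invariance, and measurability. The classical model is the following. For cochains on $X^{q+1}$ there is a $G$-equivariant chain homotopy $h$ with
\[
 \mathrm{id}-\Alt=dh+hd,
\]
obtained from the acyclic-carrier argument for the simplicial set $X^{\bullet+1}$. Two chain maps of the homogeneous (``$EX$'') complex lie over the identity: the identity itself and the symmetrized-with-sign map $\sigma\mapsto\frac1{m!}\sum\sgn(\pi)\,\pi\sigma$. The cone construction supplies an explicit homotopy between them. Concretely, on chains one sets
\[
 h_q(x_0,\dots,x_q)=\sum_j(-1)^j\,\tfrac{1}{(q+1)!}\sum_{\pi}\sgn(\pi)\,(x_0,\dots,x_j,x_{\pi(j)},\dots,x_{\pi(q)}),
\]
a finite signed sum of tuples whose entries are coordinates of the input. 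Any formula of this type works, provided it is a universal $\ZZ$-linear combination of \emph{coordinate-repetition maps} $X^{q+1}\to X^{q+2}$ with total coefficient mass bounded by a constant $K_q$ depending only on $q$.

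The dual map $h^*$ on cochains is precomposition with these coordinate maps. This step needs care with the measure class: the diagonal-type maps $(x_0,\dots,x_q)\mapsto(x_0,\dots,x_j,x_{\pi(j)},\dots)$ are \emph{not} nonsingular, since they land in a null set. We would therefore avoid literal repetition and instead use a homotopy with fresh variables integrated out. Fix a $\mathrm{Sp}(2)$-invariant probability measure $\mu$ on $X$ (Fubini--Study). Replace each repeated vertex by a new variable $z$ and average over $z\sim\mu$; this is the standard ``cone with respect to a measure'' trick. It is not $G$-invariant by itself, so we would instead use the fact that the complexes $L^\infty(X^{\bullet+1})$ and $L^0(X^{\bullet+1})$ are strong resolutions, and compare via the Monod-type homotopy. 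The cleanest route is to work directly with permutations. On $L^\infty(X^{5})^G$, write $F-\Alt_5F$ as an average over $\pi\in S_5$ of $F-\sgn(\pi)\pi^*F$. Every permutation is a product of adjacent transpositions, and for a cocycle $F$ the difference $F+\tau^*F$ for an adjacent transposition $\tau=(i\,i{+}1)$ is a coboundary of an explicit bounded cochain. Indeed, evaluating $dF=0$ on the six-tuple $(x_0,\dots,x_i,x_{i+1},x_i',\dots)$ with a repeated entry is again forbidden, so we would instead evaluate $dF$ on $(x_0,\dots,x_{i},x_{i+1},y,\dots)$ with an auxiliary point $y$ and integrate $y$ against $\mu$. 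Invariance is restored because the resulting primitive is a sum of $\pi$-translates of the universal primitive. The resulting $b_\tau$ is a finite sum of partial integrals of $F$, so it satisfies $\|b_\tau\|_\infty\le C\|F\|_\infty$. It is also measurable by Fubini, and $G$-invariant because invariance of $F$ is preserved by pulling back along the face maps.

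The main obstacle is $G$-invariance of the averaged primitive. Integrating against $\mu$ is only $\mathrm{Sp}(2)$-invariant, and there is no $G$-invariant probability measure on $X$ because the action is nonamenable. We would try first to avoid auxiliary integration entirely. We would use that, on the generic open sets $\Omega_n$, for a cocycle $F$ the value $F+\tau^*F$ can be written as $db$ with
\[
 b(x_0,\dots,x_3)=\sum_{\text{finite}}\pm F(\text{tuples in }\{x_0,\dots,x_3\}\text{ with one entry doubled}),
\]
interpreted as a limit along the regularized generic configurations, as in the later regularization of \Cref{thm:bounded-primitives}. If that fails, we would fall back to the general principle that the alternating and full subcomplexes of any strong $L^\infty$-resolution are $G$-homotopy equivalent by homotopies of norm at most one; this is Monod's argument via the sequence $L^\infty(X^{\bullet+1})$ being functorial in $X$. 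Applying it to the diagonal $X\to X\times X$ replaced by the nonsingular map $X^2\to X$ would give $b_{\mathrm N}$.

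Finally, we would set $b_{\mathrm N}$ to be the non-alternating part of the primitive, namely $b_{\mathrm N}-\Alt_4b_{\mathrm N}$. Because $\Alt$ commutes with $d$ and $\Alt_5(F-\Alt_5F)=0$, this still satisfies $db_{\mathrm N}=F-\Alt_5F$ and $\Alt_4b_{\mathrm N}=0$. The isometry then follows from $\|\Alt\|\le1$.
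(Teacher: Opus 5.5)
Your proposal has a genuine gap. The one step with real content is a $G$-invariant, essentially bounded cochain $b$ with $db=F-\Alt_5F$, and it is never produced. Each route you list breaks at a specific point.

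\begin{itemize}
\item Literal vertex repetition is undefined on $L^\infty$ classes, as you note.
\item Integrating an auxiliary vertex against the Fubini--Study measure $\mu$ gives a cochain that is not $G$-invariant. Summing $\pi$-translates does not fix this, because vertex permutations commute with the diagonal $G$-action. Your sentence that $b_\tau$ is invariant ``because invariance of $F$ is preserved by pulling back along the face maps'' ignores that $\mu$ itself is not invariant.
\item Letting the apex measure depend equivariantly on the configuration does not help either. The inductive (acyclic-carrier) construction of a homotopy between $1$ and $\Alt$ needs apex measures for pairs and triples that are invariant under their stabilizers $L$ and $H$. None of these can be absolutely continuous. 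Already $u_t\in H$, with $f_2\mapsto f_2+te_2$ and the identity on $e_1,f_1,e_2$, pushes every line with nonzero $f_2$-coordinate toward $[e_2]$. So any $H$-invariant probability measure on $X$ lives on the null plane $\PP(\langle e_1,f_1,e_2\rangle)$, that is, on nongeneric configurations.
\item The ``limit along regularized configurations'' has no meaning for a bounded measurable cocycle. Degenerate tuples lie outside $\Omega_5$, and \Cref{prop:regularization} concerns a primitive $f$ with bounded $Df$, not boundary values of $F$.
\item The fallback ``Monod principle'' is not available. In Monod's theory the full and alternating complexes are compared through the fundamental lemma, which needs relative injectivity of $L^\infty(X^{\bullet+1})$, that is, an amenable action. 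Here the point stabilizer $Q$ has nonamenable Levi factor $\CC^\times\times\SL(2,\CC)$. So $L^\infty(X^{\bullet+1})^G$ computes $\Hmb^\bullet(G\curvearrowright X)$, not $\Hcb^\bullet(G)$, and no such homotopy equivalence is available. The paper itself pointedly avoids claiming an alternation equivalence for the $L^0$ action complex.
\end{itemize}

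A homotopy of the kind you want would also give alternation equivalence in every degree. The actual statement is specific to degree four.

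The missing idea is a global input showing that every degree-four class is transgressive. The paper uses Blatz's bounded double complex $\mathcal B^{p,q}=L^\infty(G^{q+1}\times X^p)^G$.
\begin{itemize}
\item Alternation in the projective variables commutes with both differentials, so it acts on the spectral sequence.
\item It is the identity on column $p=0$, where $\Alt_0=1$, and hence on $E_5^{0,4}$.
\item Blatz's analysis shows that $d_5^{0,4}:E_5^{0,4}\to E_5^{5,0}=\Hmb^4(G\curvearrowright X)$ is surjective.
\item Naturality, $(\Alt_5)_*d_5^{0,4}=d_5^{0,4}$, then forces $(\Alt_5)_*=1$ in degree four.
\item The splitting $\operatorname{im}\Alt\oplus\ker\Alt$ of the complex produces $b_{\mathrm N}$.
\end{itemize}
Your final paragraph is correct once some bounded invariant primitive exists: replace $b$ by $b-\Alt_4b$, and get the isometry from $\|\Alt\|\le1$. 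But existence is exactly what this degree-specific transgression argument supplies, and your proposal has no substitute for it.
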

\end{samepage}
\begin{proof}
This is a consequence of Blatz's projective transgression
\cite[Section~5.2 and proof of Lemma~6.2.8, pp.~115--116]{Blatz}.
Use the bounded double complex
\[
 \mathcal B^{p,q}=L^\infty(G^{q+1}\times X^p)^G,
 \qquad p,q\ge0,\qquad X^0=\{*\},
\]
where the index $p$ counts projective vertices. The vertical differential
deletes group variables, and the horizontal differential is $(-1)^q$
times deletion of projective variables. Taking vertical cohomology first
gives
\[
 E_1^{p,q}=\Hcb^q(G;L^\infty(X^p)),\qquad
 E_2^{5,0}=\Hmb^4(G\curvearrowright X).
\]
For $G=\Sp(4,\CC)$, the proof of the cited lemma identifies
\[
 \begin{aligned}
 E_5^{0,4}&=\ker\bigl(\Hcb^4(G)\longrightarrow
                         \Hcb^4(\SL(2,\CC))\bigr),\\
 E_5^{5,0}&=E_2^{5,0},
 \end{aligned}
\]
and proves that
\[
 d_5^{0,4}:E_5^{0,4}\xrightarrow{\ \cong\ }E_5^{5,0}
\]
is an isomorphism.

Alternation in the $p$ projective variables, with $\Alt_0=1$, commutes
with both differentials and hence acts on this spectral sequence. It is
the identity on column zero and therefore on $E_5^{0,4}$. On
$E_5^{5,0}=E_2^{5,0}$ its action is induced by $\Alt_5$. Naturality gives
\[
 (\Alt_5)_*\,d_5^{0,4}=d_5^{0,4}.
\]
Surjectivity of the transgression implies $(\Alt_5)_*=1$ on degree-four
bounded action cohomology. Inclusion and alternation are consequently
inverse in that degree. Both have norm at most one, so the isomorphism
is isometric for the quotient seminorms.

The idempotent cochain map $\Alt$ gives the direct sum of complexes
$C_{\mathrm{mb}}^\bullet(X)=\operatorname{im}\Alt\oplus\ker\Alt$.
The identity just proved therefore implies $H^4(\ker\Alt)=0$.
Apply this to the cocycle $F-\Alt_5F$ in $\ker\Alt$ to obtain the
stated bounded invariant primitive $b_{\mathrm N}$.
\end{proof}

Only the degree-four reduction is used below. For finite measurable
cochains we use the identity $d\Alt=\Alt d$, without a cohomological
alternation equivalence for the $L^0$ complex.

In the coordinates \eqref{eq:U4}, the permutation action factors through
$S_4/V_4\cong S_3$, where the Klein four group $V_4$ acts trivially.
Permuting the Gram matrix and applying \eqref{eq:chi} gives the table
\begin{equation}\label{eq:symmetries}
\begin{array}{c|ccc}
\text{even}&(x,y)&(-1/y,-x/y)&(y/x,-1/x)\\[2pt]
\text{odd}&(1/x,-y/x)&(-y,-x)&(x/y,1/y).
\end{array}
\end{equation}
Thus coordinate alternation is the signed average over these six maps.
Set
\begin{equation}\label{eq:C-rho}
 \sC(x,y)=(-1/y,-x/y),\qquad \rho(x,y)=(-y,-x).
\end{equation}
For pointwise alternating $f$,
$f(\sC z)=f(z)$ and $f(\rho z)=-f(z)$. The fixed loci of the three odd symmetries give
\begin{equation}\label{eq:odd-fixed}
 f(-1,y)=f(x,-x)=f(x,1)=0
\end{equation}
whenever the displayed points belong to $\cU_4$.

\subsection{Special configurations and doubling identities}
Define maps on $\TT=(\CC^\times)^2$ by
\begin{align}
 T_1(a,b)&=(-a,-b^2),& T_2(a,b)&=(ab,-b^2),\label{eq:T}\\
 \Phi_1(a,b)&=(a,-a,b,-b^2,-ab),&
 \Phi_2(a,b)&=(a,ab,b,-b^2,-ab).\label{eq:Phi}
\end{align}
We use them only where the corresponding four- and five-point
configurations are generic.

\begin{lemma}[Doubling identities]\label{lem:dilation}
For a pointwise alternating function $f$,
\begin{equation}\label{eq:dilation}
 (Df)(\Phi_j(z))=2f(z)-f(T_jz),\qquad j=1,2,
\end{equation}
on the generic domain. If $f$ is continuous and
$M=\|Df\|_\infty<\infty$, then
$|2f(z)-f(T_jz)|\le M$ there.
\end{lemma}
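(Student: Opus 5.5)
The plan is to substitute the five coordinates of $\Phi_j(z)$ into the five-term formula \eqref{eq:D}. We then compute the five normalized faces from \eqref{eq:five-faces} and simplify each face value using only pointwise alternation: the vanishing \eqref{eq:odd-fixed} on the odd fixed loci, and the sign rule $f(\rho z)=-f(z)$ from \eqref{eq:C-rho}. The generic domain is the set of $z\in\TT$ with $\Phi_j(z)\in\cU_5$ and $T_jz\in\cU_4$. By \Cref{prop:coordinates}, the five faces of a point of $\cU_5$ are points of $\cU_4$, so every evaluation below is legitimate. In particular \eqref{eq:odd-fixed} applies to any face that lands on a fixed locus.

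For $j=1$, put $(u,v,w,x,y)=(a,-a,b,-b^2,-ab)$. Then $ux=-ab^2$ and $vw=-ab$, so the five faces in \eqref{eq:five-faces} become
\[
 (b,1),\quad(-b,-a),\quad(-1,-b),\quad(-a,-b^2),\quad(a,b).
\]
The first and third faces lie on the loci $\{y=1\}$ and $\{x=-1\}$, so $f$ vanishes there. The second face is $\rho(a,b)$, so its value is $-f(z)$. The fourth face is $T_1z$. Reading off the signs in \eqref{eq:D} gives
\[
 (Df)(\Phi_1(z))=0-(-f(z))+0-f(T_1z)+f(z)=2f(z)-f(T_1z).
\]

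For $j=2$, put $(u,v,w,x,y)=(a,ab,b,-b^2,-ab)$. Then $ux=-ab^2$ and $vw=ab^2$, so the faces become
\[
 (-1,-1/b),\quad(-b,-a),\quad(b,-b),\quad(ab,-b^2),\quad(a,b).
\]
The first face lies on $\{x=-1\}$ and the third on $\{y=-x\}$, so $f$ vanishes on both. The second face is again $\rho(z)$, and the fourth is $T_2z$. This yields $(Df)(\Phi_2(z))=2f(z)-f(T_2z)$.

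For the continuous statement, $Df$ is continuous on the open set $\cU_5$ because the face maps are smooth. Smooth measure classes give nonempty open sets positive measure, so a continuous function whose essential supremum is $M$ satisfies $|Df|\le M$ everywhere on $\cU_5$. Evaluating at $\Phi_j(z)$ and using the identity just proved gives $|2f(z)-f(T_jz)|\le M$.

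The only real risk is bookkeeping. One must get the signs and orientation of the face coordinates right, which I would double-check against the symmetry table \eqref{eq:symmetries}. One must also confirm that the vanishing faces truly lie in $\cU_4$ rather than on its boundary, which follows automatically from $\Phi_j(z)\in\cU_5$.
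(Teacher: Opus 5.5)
Your proposal is correct and follows the paper's proof: you use the same face computation from \eqref{eq:five-faces}, the same vanishing of the first and third faces via \eqref{eq:odd-fixed}, the same identification of the second face with $\rho(z)$, and the same continuity argument turning the essential bound into a pointwise bound on $\cU_5$. One small simplification: the condition $T_jz\in\cU_4$ in your definition of the generic domain is redundant, since $T_jz$ is a face of $\Phi_j(z)$ and therefore lies in $\cU_4$ once $\Phi_j(z)\in\cU_5$.
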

\begin{proof}
Using \eqref{eq:five-faces}, the faces in order, with signs
$+,-,+,-,+$, are
\[
\begin{array}{c|ccccc}
 &0&1&2&3&4\\ \hline
\Phi_1(a,b)&(b,1)&(-b,-a)&(-1,-b)&T_1(a,b)&(a,b)\\
\Phi_2(a,b)&(-1,-1/b)&(-b,-a)&(b,-b)&T_2(a,b)&(a,b).
\end{array}
\]
The first and third terms vanish by \eqref{eq:odd-fixed}, and
$(-b,-a)=\rho(a,b)$, which gives \eqref{eq:dilation}. If $f$ is
continuous, so is $Df$ on $\cU_5$. Its essential bound is then a
pointwise bound, since every nonempty open set has positive smooth
measure.
\end{proof}

Continuity is required here: the image of $\Phi_j$ has positive
codimension in $\cU_5$, so an almost-everywhere bound alone does not
restrict to that image.

\subsection{Finite correspondences and defect estimates}
Fix a continuous alternating function $f$ with
$M=\|Df\|_\infty<\infty$. Subtracting the two identities
\eqref{eq:dilation} gives
\begin{equation}\label{eq:move-cost}
 |f(T_1z)-f(T_2z)|\le2M.
\end{equation}
Let $U$ be the finite correspondence with parameter space
$\mathscr E_U=\TT$ and source and target maps $s=T_1$, $t=T_2$.
Both maps are finite \emph{\'{e}tale} of degree two, so the correspondence
retains both square-root branches of $T_1^{-1}$. Its admissible parameter
locus is defined by the polynomial
\begin{equation}\label{eq:discriminant}
 \begin{gathered}
 \mathscr E_U^\circ=\{(a,b):\Delta_U(a,b)\ne0\},\\
 \Delta_U=ab(b-2)(2b-1)(a-b-1)(a-b^2+1)(1-ab-b^2).
 \end{gathered}
\end{equation}
Substitution into \eqref{eq:five-pfaffians} shows that admissibility
makes $\Phi_1,\Phi_2$ and all their faces generic. A $U$-step is a
pair $s(p)\to t(p)$; reversing its direction gives a $U^{-1}$-step.
Each changes $f$ by at most $2M$, whereas $\sC^{\pm1}$ preserves $f$.
A lift of a word is a sequence of correspondence parameters with matching
endpoints.

\begin{lemma}[Generic lifts of finite words]\label{lem:words}
For each finite word $w$ in $U^{\pm1},\sC^{\pm1},\rho$, there is a
proper closed algebraic subset $B_w\subset\TT$ such that every lift of
$w$ starting outside $B_w$ has all its intermediate points in $\cU_4$ and all its
$U^{\pm1}$ parameters in $\mathscr E_U^\circ$. At least one such lift
exists at every point outside $B_w$.
\end{lemma}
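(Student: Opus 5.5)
We argue by induction on the length of $w$, using the fact that all the letters act through finite algebraic correspondences on the irreducible torus $\TT$. We realize each letter as an algebraic correspondence $\Gamma_\ell\subset\TT\times\TT$. For $\sC^{\pm1}$ and $\rho$ this is the graph of a birational involution or automorphism of $\TT$; these are regular on $\TT$, by \eqref{eq:C-rho}. For $U$ it is the image of $(s,t)=(T_1,T_2):\TT\to\TT\times\TT$, and for $U^{-1}$ it is the transpose. Since $T_1,T_2$ are finite \'etale of degree two, each projection $\Gamma_\ell\to\TT$ is finite and surjective. The same then holds for the iterated fibre product $\Gamma_w=\Gamma_{\ell_1}\times_\TT\cdots\times_\TT\Gamma_{\ell_k}$, which parametrizes all lifts of $w$. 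The parameter space of a lift is the sequence of correspondence parameters $p_i\in\mathscr E_U$, together with the intermediate points.

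\textbf{Bad conditions.} The conditions to be avoided are of two kinds. The first is $\Delta_U(p_i)=0$ for some $U^{\pm1}$ parameter. The second is that some intermediate point lies off $\cU_4$, that is, $xy(1-x+y)=0$. Each is a proper Zariski-closed condition on one coordinate of $\Gamma_w$, provided the corresponding coordinate projection $\Gamma_w\to\TT$ or $\Gamma_w\to\mathscr E_U$ is dominant on every irreducible component. Let $Z_w\subset\Gamma_w$ be the union of these bad loci, and set
\[
 B_w=\pi_0(Z_w)\cup(\TT\setminus\cU_4),
\]
where $\pi_0$ is the starting-point projection. The projection $\pi_0$ is finite, hence closed, and $\dim Z_w<2$, so $B_w$ is a proper closed algebraic subset. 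Every lift starting outside $B_w$ then avoids $Z_w$, which is the first assertion. Existence of at least one lift follows from surjectivity of the finite maps $T_1,T_2$ and of the automorphisms: given any current point, a preimage under $s$ or $t$ always exists in $\TT$. Equivalently, $\pi_0:\Gamma_w\to\TT$ is surjective. We also throw into $B_w$ the points where the automorphisms $\sC,\rho$ leave $\TT$; for these maps that set is empty.

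\textbf{Dominance.} The substantive step, and the expected obstacle, is to check that each coordinate projection is dominant on every component of $\Gamma_w$, so that the bad loci really are proper. We would prove this inductively. Each step map from the previous point to the next parameter, and from the parameter to the next point, is a finite surjective morphism between $2$-dimensional irreducible varieties, namely $T_1^{-1}$, $T_2$, or an automorphism. Composites of finite surjective correspondences send components of full dimension $2$ onto dense subsets. Concretely, every component of $\Gamma_w$ has dimension $2$, being finite over $\TT$, and every coordinate map from it is finite, so its image is closed of dimension $2$, hence all of $\TT$.

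Therefore the preimage of the hypersurfaces $\{\Delta_U=0\}$ and $\{xy(1-x+y)=0\}$ has dimension at most $1$. For this we need $\Delta_U\not\equiv0$ and $\cU_4\neq\emptyset$, both evident from \eqref{eq:discriminant} and \eqref{eq:U4}. This completes the plan. The only care needed is to note that finiteness of $T_j$ on the torus holds because $b\mapsto b^2$ is finite on $\CC^\times$.
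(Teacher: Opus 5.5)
Your route is the paper's: form the fibre product $\Gamma_w$ of the letter correspondences, pull back $\{1-x+y=0\}$ and $\{\Delta_U=0\}$ along the intermediate-point and parameter projections, and push the bad locus down by the finite source map $\pi_0$. Taking the image of $(T_1,T_2)$ as $\Gamma_U$ is harmless. The parameter is recovered as $(a,b)=(-x_1,-x_2/x_1)$, so $\Gamma_U\cong\mathscr E_U$ and $\Delta_U(p_i)$ is a regular function on $\Gamma_w$.

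The step that does not hold as written is the one you single out as substantive: ``every component of $\Gamma_w$ has dimension $2$, being finite over $\TT$.'' Finiteness only gives $\dim\le2$. A finite surjective map such as $\TT\sqcup\{\mathrm{pt}\}\to\TT$ has a zero-dimensional component, and a pulled-back divisor can contain such a component entirely. You lost the property that rules this out when you weakened ``finite \'etale'' to ``finite and surjective'' for the iterated fibre product.

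Keep the \'etale property. Base changes and composites of finite \'etale maps are finite \'etale, so every intermediate-point projection and every parameter projection from $\Gamma_w$ is finite \'etale. It follows that each connected component of $\Gamma_w$ is smooth, hence irreducible of dimension two. Its image under each projection is open (by \'etaleness) and closed (by finiteness), hence all of $\TT$. This is exactly how the paper shows that no pulled-back divisor contains a component. With that substitution the rest of your argument goes through: the bad locus has dimension at most one on each component, its image under the finite map $\pi_0$ is a proper closed set, and surjectivity of $\pi_0$ gives existence of a lift.
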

\begin{proof}
Represent $U^{-1}$ by $(\mathscr E_U,t,s)$ and a symmetry by its graph.
If $w=\varepsilon_m\cdots\varepsilon_1$, read right to left and take
the fibre product of the letter correspondences, matching each target
with the next source. Call this variety $\mathscr E_w$ and its source
map $s_w$.

Every projection from $\mathscr E_w$ to a copy of $\TT$ parametrizing
an intermediate point is a finite \'{e}tale surjection, by base change
and composition. The same is true
of projection to any letter's parameter space. To see the latter, split
the word before and after that letter. Its projection is obtained by
adjoining the prefix and suffix through base changes of their endpoint
coverings. All these targets are connected copies of $\TT$.

A connected component of $\mathscr E_w$ is smooth, hence irreducible.
Each of the preceding maps is surjective on that component: its image is
nonempty, open by the \'{e}tale property, and closed by finiteness. Pull
back the divisor $1-x+y=0$ along every intermediate-point map and the divisor
$\Delta_U=0$ along every $U^{\pm1}$ parameter projection. None of these
pullbacks contains a component. Their finite union $\mathscr B_w$
therefore has dimension at most one on every component. Since $s_w$ is
finite, $B_w=s_w(\mathscr B_w)$ is closed of dimension at most one in
the two-dimensional torus. No lift above a point outside $B_w$ meets
$\mathscr B_w$, and surjectivity of $s_w$ gives existence. For the empty
word take $B_w=\{1-x+y=0\}$.
\end{proof}

The nongeneric parameter locus of $\Phi_1$ is
\[
 B_{\mathrm{dil}}=\{(b-2)(a-b-1)(a-b^2+1)=0\}\subset\TT.
\]
Define
\begin{equation}\label{eq:fully-generic}
 \cU_*=
 \TT\setminus\bigcup_{n\ge0}T_1^{-n}
       \left(B_{\mathrm{dil}}\cup\bigcup_w B_w\right).
\end{equation}
This is a conull dense subset of $\cU_4$. Indeed, there are countably
many words, and the finite dominant map $T_1$ pulls a proper algebraic
subset back to a proper algebraic subset. These sets have measure zero;
their complements have dense intersection by Baire's theorem.
The set $\cU_*$ is forward invariant under $T_1$, since avoidance at all
iterates is preserved by shifting the iteration index. At each iterate
of a point in $\cU_*$, every finite word has only admissible lifts.

\subsection{Dyadic shears and compact reduction}
Write
\[
 \mathbf L(x,y)=(\log|x|,\log|y|)^{\mathsf T}.
\]
The signs in \eqref{eq:T} disappear under $\mathbf L$. A $U$-step and
the symmetry $\sC$ act on logarithmic radii by the matrices
\begin{equation}\label{eq:radius-generators}
 \mathbf U=\begin{pmatrix}1&1/2\\0&1\end{pmatrix},
 \qquad
 \mathbf C=\begin{pmatrix}0&-1\\1&-1\end{pmatrix}.
\end{equation}
Thus a correspondence word has a well-defined action on these
logarithmic absolute values, even though it may have several lifts on the complex torus.

\begin{lemma}[Dyadic shears and word length]\label{lem:shears}
Put $u(t)=\left(\begin{smallmatrix}1&t\\0&1\end{smallmatrix}\right)$.
The group generated by $\mathbf U,\mathbf C$ contains every dyadic
shear $u(k/2^m)$. If $q_n=k_n/2^{2n}$ with $|k_n|=O(2^n)$, then
$u(q_n)$ has a word of length $O(n^2)$ in these generators and their
inverses. It also contains
$\mathbf R=\diag(2,1/2)$.
\end{lemma}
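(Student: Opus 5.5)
The plan is to recognize the generated group $\Gamma=\langle\mathbf U,\mathbf C\rangle$ as containing $\SL_2(\mathbb Z)$ together with the half-shear. From there, $\mathbf R$ comes out of a standard Weyl-element identity, and conjugation by powers of $\mathbf R$ produces the dyadic shears. Write $S=\left(\begin{smallmatrix}0&-1\\1&0\end{smallmatrix}\right)$. A direct multiplication gives $S\,u(-1)=\mathbf C$. Since $u(1)=\mathbf U^2$, we get
\[
 S=\mathbf C\,\mathbf U^{2}\in\Gamma .
\]
Then $\Gamma$ contains the lower shears $l(s)=\left(\begin{smallmatrix}1&0\\s&1\end{smallmatrix}\right)=S^{-1}u(-s)S$ for every $s\in\tfrac12\mathbb Z$. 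Each of these has word length $O(|s|+1)$.

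Next I would produce $\mathbf R$ from the identity $u(a)\,l(-1/a)\,u(a)=\left(\begin{smallmatrix}0&a\\-1/a&0\end{smallmatrix}\right)$. With $a=\tfrac12$ this reads
\[
 \mathbf U\,l(-2)\,\mathbf U=\begin{pmatrix}0&1/2\\-2&0\end{pmatrix}.
\]
Multiplying on the right by $S^{-1}$, which is the inverse of the $a=1$ Weyl element $\left(\begin{smallmatrix}0&1\\-1&0\end{smallmatrix}\right)$, gives $\diag(1/2,2)=\mathbf R^{-1}$. Hence $\mathbf R\in\Gamma$, with a word of bounded length. The key relation is $\mathbf R\,u(t)\,\mathbf R^{-1}=u(4t)$, so that
\[
 \mathbf R^{-i}\,u(d)\,\mathbf R^{i}=u(d/4^{i}).
\]
For $d\in\{0,1,2,3\}$, the word $u(d)=\mathbf U^{2d}$ has bounded length, so $u(d\,4^{-i})$ has length $O(i)$. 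Every dyadic rational $k/2^m$ is a finite sum of terms $\pm d\,4^{-i}$ with $i\ge0$, plus an integer multiple of $\tfrac12$. Since $u$ is additive, this shows that every dyadic shear lies in $\Gamma$.

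For the length bound I would expand $q_n=k_n/4^{n}$ in base four with sign: $q_n=\pm\sum_j d_j4^{-j}$, summed over $0\le j\le n$, plus an integer part. The integer part is $O(1)$, because $|k_n|=O(2^n)$ gives $|q_n|=O(2^{-n})$. There are $O(n)$ digits, and each digit contributes a word of length $O(n)$, for a total of $O(n^2)$. (Horner's scheme, i.e.\ nested conjugation by single factors of $\mathbf R$, would even give $O(n)$, but $O(n^2)$ is all that is claimed.)

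The only step I expect to need care is the explicit verification of the two matrix identities, $S=\mathbf C\mathbf U^2$ and the Weyl-element product giving $\mathbf R^{-1}$. I would check these by direct multiplication. Everything else is bookkeeping with the additivity $u(s)u(t)=u(s+t)$ and the conjugation rule.
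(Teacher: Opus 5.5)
Your proof is correct and is essentially the paper's route. The paper also conjugates by $\mathbf C$ to get a lower shear $\mathbf V=\mathbf C\mathbf U\mathbf C^{-1}$, and writes $\mathbf R=\mathbf U^4\mathbf V\mathbf U^2\mathbf V^{-2}\mathbf U^{-2}$, which regroups as the product of the Weyl elements for $a=2$ and $a=-1$, i.e.\ your identity. It then conjugates $\mathbf U,\mathbf U^2$ by powers of $\mathbf R$ and sums $O(n)$ signed digits, each a word of length $O(n)$.

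One small slip: $S^{-1}=\left(\begin{smallmatrix}0&1\\-1&0\end{smallmatrix}\right)$ is itself the $a=1$ Weyl element, so right multiplication by $S^{-1}$ gives $-\mathbf R^{-1}$. Multiply by $S$ instead; alternatively, $-I=S^2\in\Gamma$ is central, so nothing downstream changes.
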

\begin{proof}
Direct multiplication gives
\begin{equation}\label{eq:radial-factorization}
 \mathbf V=\mathbf C\mathbf U\mathbf C^{-1}
 =\begin{pmatrix}1&0\\-1/2&1\end{pmatrix},\qquad
 \mathbf R=\mathbf U^4\mathbf V\mathbf U^2\mathbf V^{-2}\mathbf U^{-2}.
\end{equation}
Since $\mathbf R^k u(t)\mathbf R^{-k}=u(4^kt)$, conjugates of
$\mathbf U=u(1/2)$ and $\mathbf U^2=u(1)$ give $u(\pm2^{-m})$ for
all $m\ge0$. A signed binary expansion of $q_n$ has $O(n)$ possible
nonzero digits. Each digit uses a conjugate of length $O(n)$, so the
concatenated word has length $O(n^2)$.
\end{proof}

Bounded logarithmic absolute values confine the endpoints to a compact
product of annuli in $\TT$. The following maps avoid the excluded divisor
$1-x+y=0$ at any limit point.

\begin{samepage}
\begin{lemma}[Avoidance of the degeneracy divisor]\label{lem:correction}
The maps
\[
 S_1(x,y)=(xy,y),\qquad S_2(x,y)=(x,-xy)=\rho S_1\rho(x,y)
\]
are realized, respectively, by two $U$-steps and by their conjugate
under $\rho$. On an admissible realization,
$|f(S_i z)-f(z)|\le4M$. If $1-x+y=0$ on $\TT$, at least one of
$S_1(x,y),S_2(x,y)$ lies in $\cU_4$.
\end{lemma}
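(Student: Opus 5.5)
The plan is to verify each claim of the lemma by direct computation, using the explicit form \eqref{eq:T} of the $U$-steps. A $U$-step with parameter $(a,b)$ sends $s(a,b)=(-a,-b^2)$ to $t(a,b)=(ab,-b^2)$. Starting from $z=(x,y)$, the parameters lying over $z$ are $a=-x$ and $b$ a square root of $-y$, so one step gives $(-xb,y)$. The second coordinate is unchanged, so a second step from $(-xb,y)$ uses $a'=xb$ and a square root $b'=\pm b$ of $-y$, and lands at $(a'b',y)=(\pm xb^2,y)$. Choosing the opposite branch $b'=-b$ gives $(-xb^2,y)=(xy,y)=S_1(x,y)$. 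This is why both square-root branches were retained in the correspondence $U$. For $S_2$, the identity $\rho S_1\rho(x,y)=\rho S_1(-y,-x)=\rho(xy,-x)=(x,-xy)$ shows that $S_2$ is realized by conjugating the same two-step word by $\rho$.

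For the estimate, an admissible realization has all parameters in $\mathscr E_U^\circ$, so each step satisfies \eqref{eq:move-cost}. Two steps therefore give $|f(S_1z)-f(z)|\le 4M$. For $S_2$, we use $f\circ\rho=-f$, valid for pointwise alternating $f$:
\[
 |f(S_2z)-f(z)|=|f(S_1\rho z)-f(\rho z)|\le 4M,
\]
where the inequality applies the $S_1$ bound at the point $\rho z$. Admissibility of the lifts at $\rho z$ is part of the hypothesis "on an admissible realization", and in later use it is guaranteed by \Cref{lem:words} on $\cU_*$.

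For the divisor claim, suppose $y=x-1$ with $x,y\ne0$; in particular $x\ne1$. First, $S_1z=(x(x-1),x-1)$ has nonzero coordinates, and its degeneracy expression is $1-x(x-1)+(x-1)=x(2-x)$, which vanishes only at $x=2$. Second, $S_2z=(x,-x(x-1))$ has nonzero coordinates, and its expression is $1-x-x(x-1)=1-x^2$, which vanishes only at $x=\pm1$, hence only at $x=-1$. Since $x=2$ and $x=-1$ cannot hold simultaneously, at least one of $S_1z,S_2z$ lies in $\cU_4$.

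The only delicate point is the branch bookkeeping in the first step. The naive choice of the same branch twice lands at $(-xy,y)$ rather than $(xy,y)$, so one must use the two distinct lifts of $T_1^{-1}$. The rest is routine substitution.
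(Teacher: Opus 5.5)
Your proposal is correct and follows the paper's proof. The paper uses the same parameters $(-x,\beta)$ and $(x\beta,-\beta)$, with opposite square-root branches, to realize $S_1$, and the same use of $f\circ\rho=-f$ for $S_2$. The only cosmetic difference is in the divisor check: you compute both degeneracy loci ($x=2$ for $S_1$, $x=-1$ for $S_2$) and observe they are disjoint, whereas the paper first shows that the only point where $S_1$ fails is $(2,1)$ and then checks $S_2$ there.
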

\end{samepage}
\begin{proof}
Choose $\beta^2=-y$. The parameters $(-x,\beta)$ and
$(x\beta,-\beta)$ define consecutive steps from $(x,y)$ to $(xy,y)$.
This proves the estimate for $S_1$; conjugating and using
$f\rho=-f$ proves it for $S_2$. If both $1-x+y=0$ and
$1-xy+y=0$, then $x=1+y$ and $1-y^2=0$. Since $x\ne0$, the point
is $(2,1)$, and $1-x-xy=-3$ there. Hence $S_2(2,1)\in\cU_4$.
\end{proof}

\begin{proposition}[Compact reduction]\label{prop:return}
Let $f$ be continuous and alternating, with $M=\|Df\|_\infty<\infty$.
For each $z\in\cU_*$ there are a constant $C_z>0$, integers
$n_k\to\infty$, and points $w_k$ in a compact subset of $\cU_4$ such that
\begin{equation}\label{eq:return-estimate}
 |f(T_1^{n_k}z)-f(w_k)|\le C_z M n_k^2+4M.
\end{equation}
\end{proposition}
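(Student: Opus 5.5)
The plan is to work on logarithmic radii, where $T_1$ acts linearly, and to pull the orbit of $z$ back to a bounded region using the shear words of \Cref{lem:shears}. Each $U$-step in such a word costs at most $2M$ by \eqref{eq:move-cost}. Each $\sC^{\pm1}$ or $\rho$ costs nothing, apart from a sign, since $|f|$ is preserved. Write $\mathbf L(T_1(a,b))=\mathbf T\mathbf L(a,b)$ with $\mathbf T=\diag(1,2)$. For $z\in\cU_*$, put $\ell=\mathbf L(z)=(\ell_1,\ell_2)$. Then $\mathbf L(T_1^nz)=(\ell_1,2^n\ell_2)$, and only the second coordinate grows.

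First I will find a radius word that nearly cancels this growth. Conjugating by $\mathbf C$ turns lower shears into upper ones and vice versa. So $\mathbf V$-type dyadic shears $\mathbf C u(q)\mathbf C^{-1}$ have words of length $O(n^2)$ by \Cref{lem:shears}. The target is a word $W_n$, of length $O(n^2)$ with constants depending on $\ell$, such that $W_n(\ell_1,2^n\ell_2)^{\mathsf T}$ lies in a fixed bounded box. The lengths must stay polynomial, so I will not use high powers of $\mathbf R$.

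I will try two shears, in the order given by the choice of $n_k$.
\begin{itemize}
\item A lower shear $u'(q)=\left(\begin{smallmatrix}1&0\\q&1\end{smallmatrix}\right)$ with $q=-k/2^{2n}$ replaces $2^n\ell_2$ by $2^n\ell_2+q\ell_1$. Choosing $k\approx 2^{3n}\ell_2/\ell_1$ would violate $|k|=O(2^n)$.
\item So I apply an upper shear first. The vector $(\ell_1,2^n\ell_2)$ goes to $(\ell_1+q2^n\ell_2,\,2^n\ell_2)$. With $q=k/2^{2n}$ and $|k|=O(2^n)$, I choose $k$ to be the nearest integer to $-2^{2n}(\ell_1-c)/(2^n\ell_2)$ for a target constant $c$. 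Then the first coordinate becomes $c+O(2^{-n}|\ell_2|^{-1}\cdot2^n|\ell_2|)$, which is $O(1)$.
\item Next a lower shear of size $q'=k'/2^{2n}$ has to kill $2^n\ell_2$ using the now-bounded first coordinate. This would need $|q'|\sim2^n$, which exceeds the allowed range.
\end{itemize}
Hence I will let the $\mathbf R$-conjugation inside \Cref{lem:shears} absorb this scale. Concretely, $\mathbf R^{-m}$ with $m\approx n/2$ acts as $\diag(2^{-m},2^m)$, so it is the wrong direction. Its inverse $\mathbf R^{m}$ maps $(O(2^n),2^n)$ to $(O(2^{n+m}),2^{n-m})$. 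Alternating $\mathbf R^{\pm1}$ with one shear each time is a Euclidean-type reduction. Each round halves the second coordinate at cost $O(n)$ letters, and $O(n)$ rounds give $O(n^2)$ letters. I would first try this reduction with $\ell_2\ne0$; the case $\ell_2=0$ is trivial, since the orbit radii are then bounded. I expect some $\ell$-dependence, which is absorbed into $C_z$.

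Once the word $W_n$ is fixed, \Cref{lem:words} and the definition of $\cU_*$ guarantee an admissible lift starting at $T_1^nz$, because $\cU_*$ is $T_1$-forward invariant. The lift ends at a point $w'_n$ whose logarithmic radii lie in a fixed box, hence in a compact product of annuli $K\subset\TT$. Summing the costs gives $|f(T_1^nz)-f(w'_n)|\le 2M\cdot O(n^2)=C_zMn^2$.

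The remaining issue is that $K$ meets the divisor $1-x+y=0$, so $K$ is not a compact subset of $\cU_4$. Pass to a subsequence $n_k$ along which $w'_{n_k}$ converges to some $w_\infty\in K$. If $w_\infty\notin\{1-x+y=0\}$, a small closed neighbourhood of $w_\infty$ is a compact subset of $\cU_4$ containing the tail, and I take $w_k=w'_{n_k}$. Otherwise \Cref{lem:correction} gives an index $i$ with $S_iw_\infty\in\cU_4$. By continuity, $S_iw'_{n_k}$ then lies in a compact neighbourhood of $S_iw_\infty$ inside $\cU_4$, at extra cost $4M$; this uses an admissible realization, available by \Cref{lem:words}. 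This accounts for the $+4M$ term in \eqref{eq:return-estimate}.

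The main obstacle is the word-length step: showing that the radius vector $(\ell_1,2^n\ell_2)$ can be returned to a bounded box by a word of length $O(n^2)$, with the coefficients $k_n$ kept within the $O(2^n)$ range that \Cref{lem:shears} allows.
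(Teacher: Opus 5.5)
Your architecture matches the paper's. You pass to logarithmic radii, lift a radius word from $T_1^nz$ via \Cref{lem:words} and forward invariance of $\cU_*$, and charge $2M$ per $U^{\pm1}$-step. You then take a convergent subsequence in a compact product of annuli and apply a correction $S_i$ from \Cref{lem:correction} at cost $4M$.

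The gap is the step you flag yourself: you never produce a word of length $O(n^2)$ taking $(\ell_1,2^n\ell_2)^{\mathsf T}$ into a bounded box. The ``Euclidean-type reduction'' is only asserted. You do not show how the first coordinate stays bounded through the rounds. Also, the vector $(O(2^n),2^n)$ to which you apply $\mathbf R^m$ is not what your shear step produced.

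The detour comes from a miscalculation in your second bullet. With $q=k/2^{2n}$, the rounding error in $q$ is at most $2^{-2n-1}$, not of order $2^{-n}|\ell_2|^{-1}$. So after the upper shear the first coordinate is $c+O(2^{-n}|\ell_2|)$. It is exponentially close to the target, not merely $O(1)$ away from it.

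The fix is to take $c=0$. Choose $q_n=k_n/2^{2n}$ with $|q_n-\ell_1/(2^n\ell_2)|\le2^{-2n-1}$, so that $|k_n|=O_z(2^n)$ and
\[
 \mathbf R^n u(-q_n)\binom{\ell_1}{2^n\ell_2}
 =\binom{2^n(\ell_1-q_n2^n\ell_2)}{\ell_2},
 \qquad
 \bigl|2^n(\ell_1-q_n2^n\ell_2)\bigr|\le\tfrac12|\ell_2|.
\]
Here $\mathbf R^n=\diag(2^n,2^{-n})$ is a word of length $O(n)$, and $u(-q_n)$ is a word of length $O(n^2)$ by \Cref{lem:shears}. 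This single word does the job, with no lower shear and no iteration. It is exactly the paper's choice.

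Two smaller points:
\begin{enumerate}
\item $\rho$ is not ``free'' for the inequality you need. Since $f\rho=-f$, an odd number of $\rho$'s would bound $|f(T_1^nz)+f(y_n)|$ instead of the difference. The radius word needs only $\mathbf U$ and $\mathbf C$, and in $S_2=\rho S_1\rho$ the two signs cancel.
\item For the correction, the particular lift realizing $S_i$ (opposite square roots in its two steps) must be admissible at $w'_{n_k}$. This is guaranteed only by applying \Cref{lem:words} to the \emph{concatenated} word starting at $T_1^{n_k}z\in\cU_*$. The endpoints $w'_{n_k}$ themselves need not lie in $\cU_*$.
\end{enumerate}
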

\begin{proof}
Write $\mathbf L(z)=(a,b)^{\mathsf T}$, so
$\mathbf L(T_1^nz)=(a,2^nb)^{\mathsf T}$. For $b\ne0$, choose a
dyadic $q_n=k_n/2^{2n}$ such that
\[
 \left|q_n-\frac{a}{2^nb}\right|\le2^{-2n-1}.
\]
Then $|k_n|=O_z(2^n)$ and
\begin{equation}\label{eq:bounded-radii}
 \mathbf R^n u(-q_n)\binom a{2^nb}
 =\binom{2^n(a-q_n2^nb)}b
\end{equation}
is bounded independently of $n$. By \Cref{lem:shears}, the matrix
$\mathbf R^n u(-q_n)$ is represented by a word of length $O_z(n^2)$.
For $b=0$, use the empty word, since the radii are already bounded.

Lift the chosen word starting at $T_1^nz$. \Cref{lem:words} and
\eqref{eq:fully-generic} make every such lift admissible. Let $y_n$
be its endpoint. Each $U^{\pm1}$ step changes $f$ by at most $2M$, and each
$\sC^{\pm1}$ step preserves $f$. Therefore
$|f(T_1^nz)-f(y_n)|\le O_z(Mn^2)$.
By \eqref{eq:bounded-radii}, the points $y_n$ lie in a fixed compact
product of annuli in $\TT$. Pass to a convergent subsequence $y_{n_k}\to q$.

If $q\in\cU_4$, take $w_k=y_{n_k}$. Otherwise fix one of the two
corrections of \Cref{lem:correction} with $S_iq\in\cU_4$ and set
$w_k=S_i y_{n_k}$. Its explicit lift can be appended to the original
word. Apply \Cref{lem:words} to this \emph{concatenated} word starting
at $T_1^{n_k}z\in\cU_*$. Every lift is admissible, including the
chosen first path and correction. The additional change in $f$ is at most $4M$.
For all sufficiently large $k$, the points $w_k$ lie in a compact
neighborhood of $S_iq$ contained in $\cU_4$. Discarding finitely many
terms and choosing $C_z$ to dominate the word-length bound proves the
assertion.
\end{proof}

\begin{proof}[Proof of the continuous estimate]
Iterating \eqref{eq:dilation} for $T_1$ gives, on $\cU_*$,
\[
 |2^nf(z)-f(T_1^nz)|\le(2^n-1)M.
\]
Use the subsequence in \Cref{prop:return}. Continuity bounds $|f(w_k)|$
by a constant $B_z$, so
\[
 |f(z)|\le(1-2^{-n_k})M+
 2^{-n_k}\bigl(B_z+C_z M n_k^2+4M\bigr).
\]
The second term tends to zero. Hence $|f(z)|\le M$ on the dense set
$\cU_*$, and continuity extends the bound to all of $\cU_4$.
\end{proof}

\subsection{Regularization of measurable primitives}
Write $z=(u,w)$ and $p=(v,x,y)$. Projection to $z$ gives a surjective
submersion $\pi:\cU_5\to\cU_4$. To see surjectivity, fix $(u,w)$;
choose nonzero $v,x$ with $x-v+1\ne0$, and then choose $y$ outside
$\{0,ux-vw,x-w,v-u\}$.

Denote the other four faces by
\[
 \theta_0=\left(\frac{ux}{vw},\frac y{vw}\right),\quad
 \theta_1=\left(\frac xw,\frac yw\right),\quad
 \theta_2=\left(\frac vu,\frac yu\right),\quad
 \theta_3=(v,x).
\]
For fixed $z$, each map $p\mapsto\theta_i(z,p)$ is a submersion.
Use the variables $(x,y)$ for the first two, $(v,y)$ for the third,
and $(v,x)$ for the fourth. Equation \eqref{eq:D} becomes
\begin{equation}\label{eq:sliced-D}
 Df(z,p)=\sum_{i=0}^3(-1)^if(\theta_i(z,p))+f(z).
\end{equation}
The regularization is obtained by integrating the four face terms in
\eqref{eq:sliced-D}. Local integrability follows from
\Cref{lem:local-bounded}.

\begin{lemma}[Properly supported fibre integration]\label{lem:kernels}
Let $\pi:M\to B$ be a surjective smooth submersion. There exists a
smooth nonnegative vertical density $\eta$ on $M$ whose restrictions
$\eta_b$ to the fibres have total mass one and whose support is proper
over $B$. If $\theta:M\to Y$ is smooth
and $(\pi,\theta):M\to B\times Y$ is a submersion, then for a fixed
positive smooth density $d\lambda$ on $Y$ there is a smooth kernel
$K(b,y)\ge0$, with support proper over $B$, such that
\begin{equation}\label{eq:kernel}
 \int_{\pi^{-1}(b)}h(\theta(m))\eta_b(m)
 =\int_Y h(y)K(b,y)\,d\lambda(y).
\end{equation}
The identity holds for nonnegative measurable $h$. For
$h\in L^1_{\mathrm{loc}}(Y)$, both sides are finite, the right side is
smooth in $b$, and the value depends only on the almost-everywhere class
of $h$.
\end{lemma}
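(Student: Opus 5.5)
The plan is to build $\eta$ by gluing local fibre bumps over a partition of unity on $B$, and then to obtain $K$ as the density of a pushforward along the fibrewise submersion $\theta|_{\pi^{-1}(b)}$. Everything reduces to the local normal form for submersions together with second countability.

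\emph{Construction of $\eta$.} Surjective submersions have local smooth sections. So I would cover $B$ by a locally finite countable family of relatively compact open sets $W_\alpha$. Over each $W_\alpha$ I take a section $s_\alpha$ defined on a neighbourhood of $\overline{W_\alpha}$ and a submersion chart
\[
 \phi_\alpha:W'_\alpha\times D\to M,\qquad \pi\phi_\alpha(b,t)=b,\qquad \phi_\alpha(b,0)=s_\alpha(b),
\]
where $W'_\alpha\supset\overline{W_\alpha}$ and $D\subset\RR^k$ is an open ball. I fix a bump $\beta\ge0$ with compact support in $D$, $\beta(0)>0$ and Lebesgue mass one. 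Let $\psi_\alpha$ be a partition of unity on $B$ subordinate to $\{W_\alpha\}$. Define the vertical density
\[
 \eta^0=\sum_\alpha(\psi_\alpha\circ\pi)\,(\phi_\alpha)_*(\beta\,dt).
\]
This sum is locally finite and smooth. Its fibre mass $c(b)=\sum_\alpha\psi_\alpha(b)$ equals $1$ directly, but I would normalize by $c$ anyway in case the charts are chosen differently. Over a compact $Q\subset B$ only finitely many $\alpha$ contribute. Each contributes support inside the compact set $\phi_\alpha(\overline{W_\alpha}\times\operatorname{supp}\beta)$, so $\operatorname{supp}\eta\cap\pi^{-1}(Q)$ is compact. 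This is the properness over $B$.

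\emph{The kernel.} Since $(\pi,\theta)$ is a submersion, for each $b$ the restriction $\theta_b=\theta|_{\pi^{-1}(b)}$ is a submersion. Near any point of $M$ the local submersion theorem gives coordinates $(b,y,s)$ in which $\pi$ and $\theta$ are the projections to $b$ and $y$. In such a chart I write $\eta=e(b,y,s)\,dy\,ds$ and $d\lambda=\ell(y)\,dy$, and set
\[
 K(b,y)=\ell(y)^{-1}\int e(b,y,s)\,ds .
\]
I then sum these local contributions over a partition of unity on $M$, which is finite over compacta of $B$ by properness of $\operatorname{supp}\eta$. The result is a smooth $K\ge0$ satisfying \eqref{eq:kernel}. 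For nonnegative measurable $h$ the identity is just change of variables and Tonelli in these charts.

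\emph{Support and regularity.} I expect the main obstacle to be properness and closedness of $\operatorname{supp}K$. My approach is to note that $(\pi,\theta)$ restricted to $\operatorname{supp}\eta$ is a proper map: the preimage of $Q\times L$ lies in the compact set $\operatorname{supp}\eta\cap\pi^{-1}(Q)$. Its image is therefore closed, and $\operatorname{supp}K$ is contained in that image. Moreover, for compact $Q\subset B$, the set $\operatorname{supp}K\cap(Q\times Y)$ lies in the compact set $(\pi,\theta)(\operatorname{supp}\eta\cap\pi^{-1}Q)$.

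Hence for $b\in Q$ the kernel $K(b,\cdot)$ is bounded and supported in a fixed compact $L\subset Y$. It follows that $h\in L^1_{\mathrm{loc}}(Y)$ gives finite integrals on both sides of \eqref{eq:kernel}. The right side depends only on the almost-everywhere class of $h$, because $\lambda$ is a smooth density. It is smooth in $b$ by differentiating under the integral sign, since all $b$-derivatives of $K$ are bounded on $Q\times L$ and vanish off it. The left side equals the right side, which transfers these conclusions to it.
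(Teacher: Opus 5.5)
Your proposal is correct and follows essentially the same route as the paper: a base partition of unity combined with compactly supported fibre bumps in product charts gives $\eta$ with unit fibre mass and proper support, and $K$ is the pushforward under $(\pi,\theta)$, computed by integrating out the extra fibre coordinate in submersion charts, with properness supplying a common compact $y$-support and bounded base derivatives for differentiation under the integral. Your argument that $\operatorname{supp}K$ is closed, via properness of $(\pi,\theta)$ restricted to $\operatorname{supp}\eta$, makes explicit what the paper compresses into ``properness on the support gives a smooth kernel.''
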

\begin{proof}
Choose a locally finite base cover by relatively compact charts with
product regions $V_j\times W_j\subset M$. Let $(\chi_j)$ be a
subordinate smooth partition of unity with supports compactly contained
in $V_j$, and choose a smooth probability density $\tau_j$ compactly
supported in $W_j$. Extend $\chi_j\tau_j$ by zero and sum. The resulting
vertical density has fibre integral one. Over a compact subset of $B$
only finitely many compact product supports occur, proving properness.

Push this density forward by $(\pi,\theta)$, using an auxiliary base
density. In submersion coordinates the map is $(b,y,t)\mapsto(b,y)$,
so the pushforward is obtained by integrating a smooth density in $t$.
Properness on the support gives a smooth kernel $K$. Over each compact
base set it has a common compact $y$-support, and all its base
derivatives are bounded. Differentiation under the integral proves the
last assertion. The same construction applies to any already chosen
smooth vertical density with proper support.
\end{proof}

\begin{lemma}[Local essential boundedness]\label{lem:local-bounded}
If $f\in L^0(\cU_4)$ and $Df\in L^\infty(\cU_5)$, then
$f\in L^\infty_{\mathrm{loc}}(\cU_4)$.
\end{lemma}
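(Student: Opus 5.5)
Fix a point $z_0\in\cU_4$ and a small relatively compact neighbourhood $N$ of $z_0$. We show that $f$ is essentially bounded on $N$, using the sliced form \eqref{eq:sliced-D} of the coboundary. It gives, for almost every $(z,p)\in\cU_5$,
\[
 f(z)=Df(z,p)-\sum_{i=0}^3(-1)^if(\theta_i(z,p)).
\]
The term $Df$ is bounded by $M=\|Df\|_\infty$. So it suffices to find a set of parameters $p$, of positive fibre measure and depending measurably on $z$, on which all four face values $f(\theta_i(z,p))$ are controlled.

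The difficulty is that $f$ is only in $L^0$; a priori no face is bounded and we cannot simply integrate. The first step is therefore a ``good set'' argument. Since $f$ is finite almost everywhere, for each $R>0$ the set $G_R=\{|f|\le R\}\subset\cU_4$ has complement of arbitrarily small measure on any fixed compact set $K\subset\cU_4$, once $R$ is large. Choose a compact product region $P$ in the fibre variables such that $\theta_i(z,p)\in K$ for all $z\in\bar N$, $p\in P$. This is possible after shrinking $N$, by continuity of the face maps and the genericity of a chosen base point $(z_0,p_0)\in\cU_5$.

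The second step uses the submersion property of the $\theta_i$. Each map $(z,p)\mapsto(z,\theta_i(z,p))$ is a submersion, with the coordinate choices listed before \eqref{eq:sliced-D}. Hence the pushforward of the smooth measure on $N\times P$ under it is absolutely continuous with locally bounded density; this is the kernel bound of \Cref{lem:kernels}. Therefore
\[
 \mu\{(z,p)\in N\times P:\theta_i(z,p)\notin G_R\}\le C\,\lambda(K\setminus G_R)\xrightarrow[R\to\infty]{}0,
\]
with $C$ independent of $R$. Fubini then shows that for almost every $z\in N$ the fibre set of bad $p$ has small measure. We want more: for \emph{almost every} $z$, the set $\{p\in P:\theta_i(z,p)\in G_R\text{ for all }i\}$ must have positive measure. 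For this, apply Fubini to the bad set and a Chebyshev bound, which give this for $z$ outside a set of measure $\le C'\lambda(K\setminus G_R)/|P|$. Letting $R\to\infty$ along a sequence, we obtain for almost every $z\in N$ some $R(z)$ and some $p$ with $|f(z)|\le M+4R(z)$. This only shows finiteness again, so the argument must be made uniform.

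The uniformity is the main obstacle, and we handle it by fixing $R$ once. Choose $R$ so large that the exceptional set $E_R\subset N$ above has measure less than $\lambda(N)$; it can even be made almost empty by a fibre argument. Then, for $z\in N\setminus E_R$, we get $|f(z)|\le M+4R$. To remove $E_R$ entirely, we apply the same identity with $z$ in the role of a \emph{face} rather than the base. By the alternation symmetries \eqref{eq:symmetries} and permutation invariance of $\cU_5$, every vertex can be placed in the $z$-slot. So $f(z)$ is also expressed via four faces of a different projection. On that projection, the set of $p$ for which all four faces land in $N\setminus E_R$ (or in $G_R$) has positive measure for \emph{every} $z$ near $z_0$, because it has full measure up to a set of size $C\lambda(E_R)$, which is smaller than the fibre volume. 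This yields $|f|\le M+4R$ almost everywhere on a neighbourhood of $z_0$. If the alternation route is awkward, we would instead shrink $N$ until $\lambda(K\setminus G_R)\,C<|P|/2$ uniformly in $z$, using the uniform kernel bound. Finally, cover $\cU_4$ by countably many such neighbourhoods to get $f\in L^\infty_{\mathrm{loc}}(\cU_4)$.
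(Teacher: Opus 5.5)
Your setup matches the paper: a compact $K$ containing all faces over $\overline N\times P$, the sets $G_R=\{|f|\le R\}$, and the bound $|f(z)|\le M+4R$ read off from \eqref{eq:sliced-D}. Your last fallback sentence also names the right tool. But the main line does not close, and the alternation patch does not repair it.

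From a bound on the joint measure of $\{(z,p):\theta_i(z,p)\notin G_R\}$, Fubini and Chebyshev control the bad fibre set only for $z$ outside an exceptional set $E_R$. That set has small but in general positive measure, whereas essential boundedness on $N$ needs it to be null. The patch that moves $z$ into a face slot asserts that the bad set has small fibre measure for \emph{every} $z$ near $z_0$. That is exactly the pointwise-in-$z$ estimate your main line lacks. If you have it, $E_R$ and the change of slot are superfluous. If you have only the joint bound, the patch just produces a new exceptional set of the same kind. The patch also uses the symmetries \eqref{eq:symmetries}, which require $f$ to be alternating. The lemma makes no such assumption and is applied to general $f$ in \Cref{prop:regularization}.

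The missing step is to use \Cref{lem:kernels} fibre by fibre, as \eqref{eq:kernel} is stated, rather than after integrating in $z$. For fixed $z$ the map $p\mapsto\theta_i(z,p)$ is a submersion. Take a smooth probability density $\nu$ supported in $P$ and let $h$ be the indicator of $\{|f|>R\}$. Then for every $z$ in a compact neighbourhood $\overline{Z_0}\subset N$ of $z_0$,
\[
 \nu\{p:|f(\theta_i(z,p))|>R\}
 =\int_{\{|f|>R\}}K_i(z,\zeta)\,d\lambda(\zeta)
 \le C\,\lambda(K\cap\{|f|>R\}),
\]
with $C$ and $K$ independent of $z$ and $R$, and the left side independent of the representative of $f$.

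The smallness comes from enlarging $R$, not from shrinking $N$: the faces $\theta_i(z_0,P)$ fill an open set of fixed positive measure. Once $4C\lambda(K\cap\{|f|>R\})<1$, the set of $p$ with all four face values bounded by $R$ has positive $\nu$-measure for every $z\in Z_0$. The only exceptional $z$ are then those in the null set where \eqref{eq:sliced-D} or $|Df|\le M$ fails on a $\nu$-positive set of $p$. Hence $|f|\le M+4R$ almost everywhere on $Z_0$. This is the paper's proof, and the exceptional-set and alternation parts of your proposal can be dropped.
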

\begin{proof}
Fix a positive smooth density $\lambda$ on $\cU_4$. Near any base
point choose boxes $Z_0\Subset Z_1\Subset\cU_4$ and
a fibre box $P$ with $\overline{Z_1}\times\overline P\subset\cU_5$.
Choose a smooth probability density $\nu$ compactly supported in $P$.
Since $(z,p)\mapsto(z,\theta_i(z,p))$ is a submersion,
\Cref{lem:kernels} gives smooth pushforward kernels $K_i(z,\zeta)$
for $\nu$. For $z\in\overline{Z_0}$ their supports lie in one compact
set $K\subset\cU_4$, and all four kernels are bounded by a constant
$C$.

Since $f$ is finite almost everywhere,
$\lambda(K\cap\{|f|>L\})\to0$ as $L\to\infty$. Choose $L$ so
large that $4C\lambda(K\cap\{|f|>L\})<1$. For each $z\in Z_0$,
the union bound then gives positive $\nu$-measure to the set of $p$
for which all four $|f(\theta_i(z,p))|\le L$. For almost every $z$,
Fubini also makes \eqref{eq:sliced-D} and $|Df(z,p)|\le M$ valid
for $\nu$-almost every $p$, where $M=\|Df\|_\infty$.
Choosing $p$ in this intersection yields
$|f(z)|\le M+4L$. A countable cover by such $Z_0$ proves the claim.
\end{proof}

\begin{proposition}[Regularization with bounded error]\label{prop:regularization}
If $f\in L^0(\cU_4)$ and $M=\|Df\|_\infty<\infty$, there exists
$Sf\in C^\infty(\cU_4)$ with $\|f-Sf\|_\infty\le M$.
\end{proposition}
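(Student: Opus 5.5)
We define the regularization by averaging the four face terms of \eqref{eq:sliced-D} over the fibres of the submersion $\pi:\cU_5\to\cU_4$, $(z,p)\mapsto z$. By \Cref{lem:kernels} there is a smooth vertical probability density $\eta$ on $\cU_5$ with support proper over $\cU_4$. For each $i$ the map $(\pi,\theta_i)$ is a submersion, because $p\mapsto\theta_i(z,p)$ is a submersion for fixed $z$ in the indicated variables. Hence \Cref{lem:kernels} also gives smooth kernels $K_i(z,\zeta)\ge0$, with support proper over $\cU_4$, representing $\int f(\theta_i(z,p))\,\eta_z(p)$. We then set
\[
 Sf(z)=\int_{\pi^{-1}(z)}\Bigl(-\sum_{i=0}^3(-1)^if(\theta_i(z,p))\Bigr)\eta_z(p)
 =-\sum_{i=0}^3(-1)^i\int_{\cU_4}f(\zeta)K_i(z,\zeta)\,d\lambda(\zeta).
\]
By \Cref{lem:local-bounded}, $f\in L^\infty_{\mathrm{loc}}\subset L^1_{\mathrm{loc}}$. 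So the last clause of \Cref{lem:kernels} shows that each term is finite, smooth in $z$, and independent of the representative of $f$. Thus $Sf\in C^\infty(\cU_4)$ is well defined.

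For the error bound we use Fubini along the fibration. We choose a jointly Borel representative of $f$ and of $Df$. Because $\pi$ is a submersion, the smooth measure class on $\cU_5$ disintegrates over $\cU_4$ into fibre measures equivalent to $\eta_z$ on the support of $\eta_z$. Therefore, for almost every $z$, the identity \eqref{eq:sliced-D} and the bound $|Df(z,p)|\le M$ hold for $\eta_z$-almost every $p$. For such $z$ we integrate \eqref{eq:sliced-D} against the probability density $\eta_z$:
\[
 \int Df(z,p)\,\eta_z(p)=\sum_{i=0}^3(-1)^i\int f(\theta_i(z,p))\,\eta_z(p)+f(z)=f(z)-Sf(z).
\]
This uses $\int\eta_z=1$. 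Since the left side is bounded in absolute value by $M$, we get $|f(z)-Sf(z)|\le M$ for almost every $z$, that is, $\|f-Sf\|_\infty\le M$.

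The main point to verify carefully is the measure-theoretic justification, rather than any algebra. Two things must be checked. First, the face compositions $f\circ\theta_i$ have to be integrable over each fibre, and the almost-everywhere validity of \eqref{eq:sliced-D} on $\cU_5$ has to transfer to almost every fibre. Local boundedness of $f$ together with proper support of $\eta$ gives integrability. The transfer follows from Fubini in local product charts for $\pi$ together with nonsingularity of the $\theta_i$, since the pullback classes $f\circ\theta_i$ are well defined. Second, one has to confirm that the kernel formula from \Cref{lem:kernels} agrees with the fibre integral for the chosen representative, so that the quantity proved smooth is the same one used in the error estimate. Both are covered by \Cref{lem:kernels}: the identity \eqref{eq:kernel} holds for nonnegative measurable $h$, and we extend it to locally integrable $h$ by splitting $h$ into positive and negative parts.
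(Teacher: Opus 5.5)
Your proposal is correct and follows the paper's proof essentially verbatim: you use the same fibre average $Sf(z)=-\int_{\pi^{-1}(z)}\sum_{i=0}^3(-1)^if(\theta_i(z,p))\,\eta_z(p)$, obtain smoothness and representative-independence from \Cref{lem:kernels} via the local boundedness of \Cref{lem:local-bounded}, and get the bound by integrating \eqref{eq:sliced-D} fibrewise (Fubini) against a mass-one density. One small remark: the Fubini step only needs $\eta_z$ to be absolutely continuous with respect to the fibre measure, which holds because $\pi$ is a coordinate projection on $\cU_5\subset(\CC^\times)^5$; ``equivalence on the support'' is stronger than needed and can fail for a smooth density that vanishes on a positive-measure subset of its support.
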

\begin{proof}
Choose a vertical probability density $\eta$ for $\pi$ as in
\Cref{lem:kernels}, and define
\[
 Sf(z)=-\int_{\pi^{-1}(z)}
             \sum_{i=0}^3(-1)^if(\theta_i(z,p))\,\eta_z(p).
\]
By \Cref{lem:local-bounded}, $f$ is locally integrable. Each
$(\pi,\theta_i)$ is submersive, so \Cref{lem:kernels} makes $Sf$
smooth and independent of representatives. Fubini in local submersion
charts allows us to integrate \eqref{eq:sliced-D} for almost every
$z$. It gives
\[
 f(z)-Sf(z)=\int_{\pi^{-1}(z)}Df(z,p)\,\eta_z(p).
\]
The density has mass one, so the essential bound is at most $M$.
\end{proof}

\begin{proof}[Completion of the proof of \Cref{thm:bounded-primitives}]
Assume that $f$ is measurable and alternating. Put
$M=\|Df\|_\infty$ and $h=\Alt_4(Sf)$ in quotient coordinates. The six maps in \eqref{eq:symmetries} are
smooth, so $h$ is smooth and alternating. Since $\Alt f=f$,
\[
 \|f-h\|_\infty\le M,
 \qquad
 \|Dh\|_\infty\le\|Df\|_\infty+5\|f-h\|_\infty\le6M.
\]
Apply the continuous case to $h$: $\|h\|_\infty\le6M$.
The triangle inequality gives \eqref{eq:seven}.
\end{proof}

\begin{corollary}[Comparison injectivity]\label{cor:injective}
The map $c_X^4:\Hmb^4(G\curvearrowright X)\to
\Hm^4(G\curvearrowright X)$ is injective.
\end{corollary}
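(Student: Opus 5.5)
Take a class $[F]\in\Hmb^4(G\curvearrowright X)$ with $c_X^4[F]=0$ and show that $[F]=0$ in bounded cohomology. I will work in the Gram model \eqref{eq:exact-model}, where $F\in L^\infty(\cU_5)$ and $EF=0$.

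The first step is to reduce to an alternating representative. By \Cref{lem:bounded-alternation} there is a bounded $b_{\mathrm N}$ with $F-\Alt_5F=db_{\mathrm N}$. It therefore suffices to prove that $F':=\Alt_5F$ is a bounded coboundary. The class of $F'$ is also in the kernel of $c_X^4$, since it differs from $F$ by a bounded, hence finite, coboundary.

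The second step turns vanishing in $\Hm^4$ into a measurable primitive. Vanishing gives $g\in L^0(\cU_4)$ with $Dg=F'$. Set $f=\Alt_4 g$. Since $d\Alt=\Alt d$ on $L^0$ and $\Alt_5F'=F'$, we get $Df=\Alt_5 Dg=F'$. So $f$ is a finite measurable, alternating primitive whose coboundary $Df=F'$ is essentially bounded.

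The third step applies \Cref{thm:bounded-primitives} to this $f$, which gives $\|f\|_\infty\le7\|F'\|_\infty<\infty$. Hence $f\in L^\infty(\cU_4)$. By the isometric identification in \Cref{prop:coordinates}, $f$ corresponds to a bounded invariant three-cochain. Then $F=F'+db_{\mathrm N}=d(f+b_{\mathrm N})$ with $f+b_{\mathrm N}$ bounded, so $[F]=0$.

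I expect the only delicate point to be the second step: checking that alternation on $L^0$ is compatible with the Gram-coordinate operator $D$, so that $\Alt_4$ of a primitive is again a primitive of $\Alt_5F'$. The paper already asserts $d\Alt=\Alt d$ on $L^0$, and permutations act nonsingularly by \Cref{prop:coordinates}, so this holds almost everywhere. The remaining steps are direct invocations of earlier results.
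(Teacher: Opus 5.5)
Your proposal is correct and follows the paper's own argument. You reduce to an alternating bounded representative via \Cref{lem:bounded-alternation}, replace the finite measurable primitive by its alternation using $d\Alt=\Alt d$, and conclude with the estimate of \Cref{thm:bounded-primitives} in Gram coordinates; the only difference is that you write out the bounded correction $b_{\mathrm N}$, which the paper leaves implicit when it says it chooses an alternating bounded representative.
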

\begin{proof}
Let a bounded class map to zero. By \Cref{lem:bounded-alternation},
choose an alternating bounded representative $F$. There is an invariant
finite measurable primitive $f$ with $F=df$, and then also
$F=d(\Alt_4 f)$. Descend to $\cU_4$ using \Cref{prop:coordinates}.
\Cref{thm:bounded-primitives} makes this alternating primitive bounded.
Thus the original class is zero in bounded action cohomology.
\end{proof}

\section{Measurable cohomology of the projective action}\label{sec:ordinary}

The measurable cohomology of the projective action is determined by
the stabilizers of generic configurations.

\begin{theorem}\label{thm:ordinary}
There is a linear isomorphism
\[
 \Hm^4(G\curvearrowright X;\RR)\cong\Hom_{\RR}(\CC,\RR).
\]
In particular this space has real dimension two.
\end{theorem}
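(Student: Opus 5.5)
The plan is to compute $\Hm^4(G\curvearrowright X)$ with the measurable analogue of the double complex in the proof of \Cref{lem:bounded-alternation}:
\[
 \mathcal M^{p,q}=L^0(G^{q+1}\times X^p)^G ,
\]
and to compare its two spectral sequences.

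\emph{First filtration.} Take horizontal (projective) cohomology first. For every $q$ the augmented complex $L^0(G^{q+1})\to L^0(G^{q+1}\times X^{\bullet})$ is exact. A contracting homotopy inserts an auxiliary projective point and integrates it against a fixed probability density, which respects $L^0$ after truncation as in \Cref{lem:descent}. Hence the total cohomology is $H_{\mathrm m}^\bullet(G)\cong\Hc^\bullet(G)\cong\Lambda_\RR(\xi_3,\xi_7)$, using \eqref{eq:continuous} and the Moore/Austin--Moore identification of measurable with continuous cohomology for Lie groups. In particular the total cohomology is $0$ in degrees $4$ and $5$ and $\RR$ in degree $3$.

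\emph{Second filtration.} Take group cohomology first. By Mackey's point-stabilizer lemma on the conull orbit sets $\Omega_p$ of \Cref{prop:coordinates},
\[
 E_1^{p,q}\cong L^0\bigl(\Omega_p/G;\,\Hc^q(H_p)\bigr),
\]
where $H_p$ is the stabilizer of a generic $p$-tuple. The row $q=0$ is the invariant action complex, so $E_2^{p,0}=\Hm^{p-1}(G\curvearrowright X)$ with the vertex count shifted by one.

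\emph{Stabilizers.} Next I compute the $H_p$ and their continuous cohomology.
\begin{itemize}
\item $p=1$: $H_1$ is the parabolic with Levi $\CC^\times\times\SL(2,\CC)$, so $\Hc^\bullet(H_1)\cong\Lambda(\epsilon_1)\otimes\Lambda(\xi_3)$.
\item $p=2$: a nondegenerate pair spans a symplectic plane, so $H_2\cong\CC^\times\times\SL(2,\CC)$ and gives the same answer.
\item $p=3$: the three lines span a $3$-space with a radical line. The stabilizer is, up to finite index, an extension of a torus by a unipotent group containing a copy of $(\CC,+)$. This is where $\Hc^1(H_3)\supseteq\Hom_\RR(\CC,\RR)$ appears.
\item $p\ge4$: $H_p=\{\pm I\}$ by the freeness argument in \Cref{prop:coordinates}, so $E_1^{p,q}=0$ for $q>0$.
\end{itemize}
Only finitely many nonzero entries $E_1^{p,q}$ with $p\le3$ can then reach total degree $4$ or $5$. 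I would tabulate them, together with the horizontal differentials (restriction maps between stabilizers) needed to get $E_2$.

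\emph{Extracting $\Hm^4$.} The target $\Hm^4(G\curvearrowright X)=E_2^{5,0}$ can be hit only by $d_2\colon E_2^{3,1}\to E_2^{5,0}$ and by higher transgressions from columns $p\le2$. It maps to $E_\infty^{5,0}$, which must vanish because the total degree-$5$ cohomology is zero after accounting for $\xi_3$. So $\Hm^4$ equals the image of the incoming differentials. I expect $E_2^{3,1}$ to reduce, after killing the torus and semisimple parts by the $p=2$ column, to $\Hom_\RR(\CC,\RR)$ coming from the unipotent $\CC$-factor. I also expect the entries from $p\le2$ in the relevant total degrees to be accounted for by $\xi_3$ and the Levi classes, leaving $d_2\colon E_2^{3,1}\xrightarrow{\cong}E_2^{5,0}$.

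\emph{Main obstacle.} The hardest step is the $p=3$ stabilizer computation and the exact determination of $E_2^{3,1}$. This requires showing that restriction from $H_2$ does not kill the two-dimensional unipotent contribution, and that no further classes from $\Hc^1$ of the toral part survive. It is also necessary to check that the remaining differentials ($d_3$, $d_4$) into $E^{5,0}$ vanish for degree reasons. I would first identify $H_3$ explicitly in a symplectic basis adapted to the radical line. Then I would compute $\Hc^1(H_3)=\Hom(H_3,\RR)$ directly, and compare the restrictions from the two-point stabilizers of the three faces.
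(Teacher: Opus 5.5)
Your framework is the paper's: the $L^0$ double complex, stabilizer columns via Mackey/Shapiro, and the transgression $d_2\colon E_2^{3,1}\to E_2^{5,0}$. However, one stabilizer claim is wrong and one essential input is missing.

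First, a small inconsistency. With $X^0$ a point, as in the complex you cite, exact augmented rows make the total complex acyclic, which is how the paper uses it. You get total cohomology $\Hm^\bullet(G)$ only after discarding the column $p=0$. More importantly, the triple stabilizer contains no torus. If $g$ fixes three pairwise non-orthogonal lines with $gv_i=\lambda_iv_i$, then $\omega(v_i,v_j)\ne0$ forces $\lambda_i\lambda_j=1$ for all pairs. So all $\lambda_i=\pm1$ and $H_3\cong\{\pm1\}\times(\CC,+)$, as in \Cref{prop:stabilizers}. The same observation shows that the character $\log|a|$ of $L$ restricts to zero along all three faces. Hence $d_1^{2,1}=0$ and $E_2^{3,1}=\Hc^1(H_3)=\Hom_{\RR}(\CC,\RR)$ with nothing to cancel; there is no ``toral part'' to kill.

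The genuine gap is injectivity of $d_2^{3,1}$ and vanishing of $d_4$ out of $(1,3)$. Neither follows from degree reasons in your setup. Without the $p=0$ column, $E_2^{1,3}=\Hc^3(Q)\cong\RR$ survives, because the two face maps $\Hc^3(Q)\to\Hc^3(L)$ coincide, so $d_1^{1,3}=0$. This entry lies in the same total degree as $E_2^{3,1}$, where the total cohomology is $\Hc^3(G)\cong\RR$. You get $E_\infty^{3,1}=0$ (hence $\ker d_2^{3,1}=0$) and $d_4^{1,3}=0$ only if $\xi_3$ is detected in column one, i.e.\ if restriction $\Hc^3(G)\to\Hc^3(Q)$ is nonzero. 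Otherwise $\xi_3$ could sit at $(3,1)$ and the $(1,3)$ class could transgress to $(5,0)$, changing the count. This is exactly the input the paper proves in \Cref{lem:stabilizer-cohomology}, via compact duals and the fact that restriction $H^3(\Sp(2))\to H^3(\Sp(1))$ is an isomorphism. In the paper's augmented complex, that one fact kills $E_2^{0,3}$ and $E_2^{1,3}$ at once.

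You also need to justify $\Hc^\bullet(Q)\cong\Hc^\bullet(L)$. The paper does this with a central element of $L$ acting on the Heisenberg cohomology with eigenvalues $\ne1$. Finally, replace your row homotopy. Integrating an $L^0$ function against a fixed density is neither defined nor $G$-equivariant. Slicing at a generic point in descended coordinates, as in \Cref{lem:row-exact}, works.
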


The isomorphism is the $d_2$ transgression from the first cohomology
of the generic triple stabilizer. We compute the stabilizers and the
relevant differentials below.

\subsection{Stabilizers of generic configurations}
Choose a symplectic basis $(e_1,f_1,e_2,f_2)$ with
$\omega(e_i,f_j)=\delta_{ij}$, and put $W=\langle e_2,f_2\rangle$.
Let
\begin{align*}
 Q&=\Stab_G([e_1]),\\
 L&=\Stab_G([e_1],[f_1]),\\
 H&=\Stab_G([e_1],[f_1],[e_1+f_1+e_2]).
\end{align*}
These are stabilizers of ordered configurations; each line is fixed
individually.

\begin{proposition}\label{prop:stabilizers}
The spaces $\Omega_1,\Omega_2,\Omega_3$ are single $G$-orbits, with
stabilizers
\begin{equation}\label{eq:stabilizers}
 Q=L\ltimes N,\qquad
 L\cong\CC^\times\times\SL(2,\CC),\qquad
 H\cong\{\pm1\}\times(\CC,+).
\end{equation}
Here $N$ is the complex three-dimensional Heisenberg group. For $p\ge4$
the stabilizer on $\Omega_p$ is $Z(G)=\{\pm I\}$.
\end{proposition}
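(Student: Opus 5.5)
The plan is to prove the orbit statements and compute the stabilizers one step at a time. At each step we normalize lifts with Witt's theorem for $\omega$ and then read off the stabilizer in the basis $(e_1,f_1,e_2,f_2)$.

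\emph{One point.} $\Sp(V)$ acts transitively on nonzero vectors. Indeed, any $v\neq0$ admits some $u$ with $\omega(v,u)=1$. The plane $\langle v,u\rangle$ is then nondegenerate, and a symplectic basis of its orthogonal complement completes $(v,u)$ to a symplectic basis. Hence $\Omega_1=X$ is a single orbit.

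For $g\in Q$ we have $ge_1=\lambda e_1$. Then $g$ preserves $e_1^\perp=\langle e_1,e_2,f_2\rangle$, and it induces a symplectic map on $e_1^\perp/\langle e_1\rangle\cong W$, i.e.\ an element of $\SL(2,\CC)=\Sp(W)$. The subgroup $L$ of elements that also preserve $\langle f_1\rangle$ and $W$ is a Levi factor: $e_1\mapsto\lambda e_1$, $f_1\mapsto\lambda^{-1}f_1$, $g|_W\in\Sp(W)$. This gives $L\cong\CC^\times\times\SL(2,\CC)$.

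The kernel $N$ of $Q\to L$ consists of the maps
\[
 e_1\mapsto e_1,\qquad f_1\mapsto f_1+w+ce_1,\qquad u\mapsto u+\epsilon\,\omega(w,u)e_1\quad(u\in W),
\]
where $\epsilon=\pm1$ is a sign fixed by requiring that the map be symplectic, and $(w,c)\in W\times\CC$. This is complex three-dimensional. Composing two such maps shows that the $c$-coordinate picks up $\pm\omega(w,w')$, which is the Heisenberg law, and $Q=L\ltimes N$.

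\emph{Two points.} A pair of lines in $\Omega_2$ has $\omega(v_0,v_1)\neq0$. Rescaling so that $\omega(v_0,v_1)=1$ and completing as above shows that $\Omega_2$ is one orbit, that of $([e_1],[f_1])$. Its stabilizer preserves $\langle e_1,f_1\rangle$ and hence also $W=\langle e_1,f_1\rangle^\perp$. It is therefore exactly the group $L$ described above.

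\emph{Three points.} Take $v_0=e_1$ and $v_1=f_1$, and write $v_2=\alpha e_1+\beta f_1+w$ with $w\in W$. The pairings give $\beta=\omega(e_1,v_2)\neq0$ and $\alpha=-\omega(f_1,v_2)\neq0$, and linear independence of the three vectors forces $w\neq0$.

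Now normalize within $L$. The torus sends $(\alpha,\beta)\mapsto(\lambda\alpha,\lambda^{-1}\beta)$, and we may also rescale $v_2$. Together these achieve $\alpha=\beta=1$. Since $\SL(2,\CC)$ is transitive on $W\setminus\{0\}$, we may then take $w=e_2$. Hence $\Omega_3$ is one orbit.

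For $g=(\lambda,h)\in L$, the condition $g(e_1+f_1+e_2)=\mu(e_1+f_1+e_2)$ gives $\lambda=\mu=\lambda^{-1}$, so $\lambda=\pm1$ and $he_2=\lambda e_2$. Since $\det h=1$, the matrix $h$ in the basis $(e_2,f_2)$ is $\left(\begin{smallmatrix}\lambda&t\\0&\lambda\end{smallmatrix}\right)$ with $t\in\CC$. The map $(\lambda,t)\mapsto g$ is an isomorphism $\{\pm1\}\times(\CC,+)\cong H$, because the sign is central and $t$ adds under composition.

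\emph{Four or more points.} The freeness argument in the proof of \Cref{prop:coordinates} applies here. If $gv_i=\lambda_iv_i$ on four generic lines, then $\lambda_i\lambda_j=1$ for all $i\ne j$, so all $\lambda_i$ equal a common $\pm1$, and $g=\pm I$ on the spanning set. Hence the stabilizer on $\Omega_p$ is $Z(G)$ for $p\ge4$.

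The only delicate point is the bookkeeping of the unipotent radical: checking the symplectic condition on the formula for $N$, fixing the sign $\epsilon$, and verifying the Heisenberg commutator. I would handle this by writing the elements of $Q$ as block matrices in the basis $(e_1,e_2,f_2,f_1)$ and imposing $g^{\mathsf T}Jg=J$ directly.
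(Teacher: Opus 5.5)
Your proposal follows the paper's proof essentially step for step: transitivity by completing a normalized pair to a symplectic basis, $Q=L\ltimes N$ via the action on $\CC e_1$ and on $e_1^\perp/\CC e_1$, normalization of the third line by the torus and $\SL(W)$, and the freeness argument for $p\ge4$ (the symplectic condition forces $\epsilon=+1$, which matches the paper's formula for $n(w,c)$). One small slip: with $h=\left(\begin{smallmatrix}\lambda&t\\0&\lambda\end{smallmatrix}\right)$ the off-diagonal entries compose as $\lambda t'+\lambda' t$, so $t$ does not add when a sign is $-1$ and your map $(\lambda,t)\mapsto g$ is not a homomorphism; write $h=\lambda\left(\begin{smallmatrix}1&t\\0&1\end{smallmatrix}\right)$ instead, as the paper does, to obtain $H\cong\{\pm1\}\times(\CC,+)$.
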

\begin{proof}
Transitivity on lines and on symplectically nonorthogonal ordered pairs
follows by
extending a pair of vectors with pairing one to a symplectic basis.
An element of the pair stabilizer sends
$e_1\mapsto ae_1$, $f_1\mapsto a^{-1}f_1$, and preserves $W$, acting
there by some $B\in\SL(W)$. This gives the description of $L$.

After fixing the first two lines, a generic third line has a lift
$\alpha e_1+\beta f_1+w$, where $\alpha\beta\ne0$ and $w\ne0$.
Choose $a$ with $a^2=\beta/\alpha$, and then use transitivity of
$\SL(W)$ on $W\setminus\{0\}$ to obtain the displayed standard
triple. An element $(a,B)\in L$ fixes that triple precisely when
\[
 ae_1+a^{-1}f_1+Be_2=c(e_1+f_1+e_2).
\]
Hence $a=a^{-1}=c=\varepsilon\in\{\pm1\}$, and, in the basis
$(e_2,f_2)$,
\[
 B=\varepsilon\begin{pmatrix}1&t\\0&1\end{pmatrix},\qquad t\in\CC.
\]
The parameter $t$ adds under multiplication and the sign is central.

The point stabilizer acts on the line $\CC e_1$ and on the symplectic
quotient $e_1^\perp/\CC e_1\cong W$. This gives its projection onto
$L$. Its kernel consists of the maps
\begin{equation}\label{eq:Heisenberg}
 \begin{aligned}
 n(w,c)e_1&=e_1,&
 n(w,c)f_1&=f_1+w+ce_1,\\
 n(w,c)v&=v+\omega(w,v)e_1 &&(v\in W),
 \end{aligned}
\end{equation}
with $w\in W$, $c\in\CC$. Direct multiplication gives
\[
 n(w,c)n(w',c')=n(w+w',c+c'+\omega(w,w')),
\]
the Heisenberg group law. The projection has the standard block
splitting, proving $Q=L\ltimes N$. The final assertion was established
in the freeness argument of \Cref{prop:coordinates}.
\end{proof}

\begin{lemma}[Stabilizer cohomology]\label{lem:stabilizer-cohomology}
Let $A=\Hom_{\RR}(\CC,\RR)$, a real two-dimensional vector space.
Then
\begin{equation}\label{eq:stabilizer-rings}
 \Hc^\bullet(L;\RR)\cong\Hc^\bullet(Q;\RR)
 \cong\Lambda(\eta_1,\xi_3),
 \qquad
 \Hc^\bullet(H;\RR)\cong\Lambda A.
\end{equation}
The degrees of $\eta_1,\xi_3$ are one and three. Restriction
$\Hc^3(G)\to\Hc^3(Q)$ is an isomorphism. Each of the three face
restrictions from the cohomology of a pair stabilizer to that of the
triple stabilizer is zero in degree one.
\end{lemma}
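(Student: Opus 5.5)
The plan is to compute everything through van Est together with a Hochschild--Serre reduction along the unipotent radical, and to settle the degree-one face statement by a unipotence argument.

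\emph{The group $L$.} Since $L\cong\CC^\times\times\SL(2,\CC)$ is connected with maximal compact subgroup $U(1)\times\SU(2)$ and contractible symmetric space, van Est gives $\Hc^\bullet(L)\cong H^\bullet(U(1)\times\SU(2))=\Lambda(\eta_1,\xi_3)$. Here $\eta_1$ is represented by the homomorphism $(a,B)\mapsto\log|a|$, and $\xi_3$ restricts to the Borel class of $\SL(2,\CC)$.

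\emph{The group $Q$.} I would use $\Hc^\bullet(Q)\cong H^\bullet(\mathfrak q,\mathfrak k;\RR)$ (Hochschild--Mostow) with $\mathfrak k$ the compact Lie algebra of $L$. Then apply the Hochschild--Serre spectral sequence for the ideal $\mathfrak n$, which gives
\[
 H^\bullet(\mathfrak q,\mathfrak k)\Leftarrow H^\bullet\bigl(\mathfrak l,\mathfrak k;H^\bullet(\mathfrak n)\bigr).
\]
By \eqref{eq:Heisenberg}, the central factor $a\in\CC^\times$ of $L$ acts on $\mathfrak n$ with weights $a$ (on $W$) and $a^2$ (on the centre). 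In the Chevalley--Eilenberg complex of $\mathfrak n$, every positive-degree piece therefore has only nonzero weights of the positive real part of $a$. A central element acting semisimply without invariants kills relative cohomology, so $H^\bullet(\mathfrak l,\mathfrak k;H^j(\mathfrak n))=0$ for $j>0$. The spectral sequence collapses to $\Hc^\bullet(Q)\cong\Hc^\bullet(L)$, and this isomorphism is restriction along the splitting $L\subset Q$. This weight argument is the step I expect to need the most care: one must check that each graded piece of $H^\bullet(\mathfrak n)$ really has nonzero real weight, including the top class of weight $a^{-4}$.

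\emph{Restriction from $G$ in degree three.} Both $\Hc^3(G)$ and $\Hc^3(Q)$ are one-dimensional. Compose $\Hc^3(G)\to\Hc^3(Q)\to\Hc^3(L)\to\Hc^3(\SL(2,\CC))$, where $\SL(2,\CC)=\SL(W)$ is the block factor. On compact duals this composite is the map $H^3(\Sp(2))\to H^3(\Sp(1))$ induced by the standard block inclusion. That map is an isomorphism, since $\pi_3(\Sp(1))\to\pi_3(\Sp(2))$ is an isomorphism. Hence $\Hc^3(G)\to\Hc^3(Q)$ is nonzero, and so it is an isomorphism.

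\emph{The group $H$.} From $H\cong\{\pm1\}\times(\CC,+)$, the finite factor contributes nothing with real coefficients, and van Est for the vector group gives $\Hc^\bullet(H)\cong\Lambda\Hom_\RR(\CC,\RR)=\Lambda A$.

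\emph{Face restrictions in degree one.} Each face restriction is induced by the inclusion of $H$ into the stabilizer $L'$ of a pair of its three lines, which is conjugate to $L$. Degree-one classes of $L'$ are continuous homomorphisms $\varphi:L'\to\RR$, so I need $\varphi|_H=0$. It vanishes on $\pm1$ because $\RR$ is torsion-free. On the identity component of $H$, which consists of unipotent elements (it is $\exp(\CC X)$ with $X$ nilpotent, as seen for the standard pair), I would argue as follows. Inside the reductive group $L'\cong\CC^\times\times\SL(2,\CC)$, a torus element $g$ satisfies $g\exp(X)g^{-1}=\exp(\lambda X)$ with $|\lambda|\ne1$ chosen freely. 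Conjugation invariance and additivity of $\varphi$ on $t\mapsto\varphi(\exp tX)$ then force $\varphi(\exp tX)=0$. Equivalently, unipotent elements lie in the derived group $\SL(2,\CC)$, on which $\varphi$ vanishes. The one point to check is that $H$ is contained in each of the three pair stabilizers as a subgroup whose identity component is unipotent there, which follows from \Cref{prop:stabilizers} and transitivity on $\Omega_2$.
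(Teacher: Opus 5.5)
Your proposal is correct and follows essentially the paper's route: van Est for $L$ and $H$, Hochschild--Serre along $N$ with the positive rows killed by the central element $\diag(r,r^{-1},1,1)$ acting with nonzero weights, the compact-dual comparison $H^3(\Sp(2))\to H^3(\Sp(1))$ for the degree-three restriction, and vanishing of degree-one characters on $H$. The differences are only in execution. You prove the central-element vanishing in relative Lie algebra cohomology via the Cartan homotopy $d\iota_z+\iota_z d=\theta_z$, where the paper uses a group-level prism homotopy. You kill the characters on the identity component of $H$ by unipotence and perfectness of $\SL(2,\CC)$, where the paper simply notes that the scalar coordinate of $(\varepsilon,t)$ in every pair stabilizer is $\varepsilon^{\pm1}$, so $\log|a|$ restricts to zero.
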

\begin{proof}
For $L=\CC^\times\times\SL(2,\CC)$, van Est and the product formula
give the exterior algebra in \eqref{eq:stabilizer-rings}. The degree-one
generator is the continuous real character $(a,B)\mapsto\log|a|$; the degree-three
generator comes from $\Hc^3(\SL(2,\CC))=H^3(S^3;\RR)$
\cite{BorelWallach}. A vector group has continuous cohomology the
exterior algebra on its real dual, and a finite direct factor contributes
no positive-degree real cohomology. The degree-one generators for $H$ are the
homomorphisms $t\mapsto\operatorname{Re}t$ and
$t\mapsto\operatorname{Im}t$.

For $r>1$, the central element
$a_r=\diag(r,r^{-1},1,1)$ of $L$ conjugates $n(w,c)$ to
$n(rw,r^2c)$. Thus its eigenvalues on the real Lie algebra of $N$ are
$r$ and $r^2$. On every positive-degree exterior power of the dual,
and hence on its cohomology, all eigenvalues are $r^{-m}$ with $m>0$.
The van Est identification for the simply connected nilpotent group
$N$ gives the same statement for $\Hc^s(N;\RR)$.

For a locally compact
second countable group $J$ and a finite-dimensional continuous real
$J$-module $E$, if $z\in Z(J)$ and $z-1:E\to E$ is invertible,
then $\Hc^q(J;E)=0$ for every $q$. Its inverse is continuous and
$J$-equivariant. Indeed,
simultaneous multiplication of the arguments by $z$ on the homogeneous
continuous complex acts as $z$ on coefficients. The prism
\[
 (Kc)(g_0,\ldots,g_{q-1})
 =\sum_{i=0}^{q-1}(-1)^i
 c(g_0,\ldots,g_i,zg_i,zg_{i+1},\ldots,zg_{q-1})
\]
with $K=0$ in degree zero satisfies $dK+Kd=z-1$. Centrality makes
$K$ equivariant. Thus $z-1$
induces both zero and an isomorphism on cohomology, forcing vanishing.
The prism acts on continuous cochains.

Apply this fact to $J=L$ and $E=\Hc^s(N;\RR)$ for $s>0$. The
continuous Hochschild--Serre spectral sequence
\[
 \Hc^p(L;\Hc^s(N;\RR))\Longrightarrow\Hc^{p+s}(Q;\RR)
\]
has no positive-$s$ rows. It therefore identifies inflation from $L$
to $Q$ with an isomorphism \cite{HochschildMostow,BorelWallach}.

For restriction in degree three, write $p_Q:Q\to L$ for the projection
and $i_L:L\hookrightarrow Q$ for the splitting. Since $p_Q^*$ is
an isomorphism and $i_L^*p_Q^*=1$, restriction from $G$ to $Q$ is
an isomorphism in degree three exactly when restriction to $L$ is.
The block inclusion $L\hookrightarrow G$ preserves the Cartan involution
$g\mapsto(g^*)^{-1}$. Its maximal compact subgroups are
$K_L=U(1)\times\Sp(1)\subset K_G=\Sp(2)$. On compact duals the
map is induced by $j\times j$, where $j:K_L\hookrightarrow K_G$.
Under $(K\times K)/K_{\mathrm{diag}}\cong K$ this is precisely $j$.
Naturality of van Est therefore identifies restriction with $j^*$.
The map $H^3(\Sp(2))\to H^3(\Sp(1))$ is an isomorphism by the
fibration $\Sp(1)\to\Sp(2)\to S^7$, and the $U(1)$ factor adds
nothing to degree three \cite{MimuraToda}. This proves the assertion.

Finally an element $(\varepsilon,t)\in H$ acts on each of the three
standard lines by the same sign $\varepsilon$. Its scalar coordinate in
any of the three pair stabilizers is $\varepsilon$, possibly inverted.
The degree-one character restricts to $\log|\varepsilon|=0$.
Thus every face restriction in degree one is zero.
\end{proof}

\subsection{Measurable induction and naturality}
For a standard measure space $Y$, choose a probability measure $\mu$
in its measure class. Convergence in measure on $L^0(Y)$ is metrized by
\[
 d_0(f,h)=\int_Y\min(1,|f-h|)\,d\mu
\]
and makes $L^0(Y)$ a Polish vector space. The topology is independent
of the chosen probability representative. Fubini gives the exponential law
$L^0(Y;L^0(Z))=L^0(Y\times Z)$.

The smooth action on compact $X^p$ induces a continuous action on
$L^0(X^p)$: approximation by continuous functions in $d_0$ and uniform
boundedness of the Radon--Nikodym derivatives near the identity prove
continuity. Thus $L^0(X^p)$ is a Polish $G$-module for Moore cohomology.
For a Polish $J$-module $E$, write $\Hm^q(J;E)$ for that theory,
computed by equivariant measurable homogeneous cochains modulo null
sets. Here equivariance means
$c(hg_0,\ldots,hg_q)=h\,c(g_0,\ldots,g_q)$, and the differential
still deletes arguments. For a function-space coefficient, the action is
$(hF)(\mathbf x)=F(h^{-1}\mathbf x)$.
For finite-dimensional real coefficients, it agrees naturally with
continuous cohomology by Austin--Moore \cite[Theorem~A]{AustinMoore}.
The $L^0$-coefficient columns are treated by measurable induction.

\begin{lemma}[Measurable induction and face maps]\label{lem:shapiro}
Let $K<G$ be closed, let $Y$ be a standard measure space with trivial
$G$-action, and give $A_Y=L^0(Y)$ the trivial $K$-action. Then
\begin{equation}\label{eq:shapiro}
 \Hm^q(G;L^0(Y\times G/K))\cong\Hm^q(K;A_Y).
\end{equation}
If $K$ is finite, these groups vanish for $q>0$.
For closed $H,K<G$ and $a^{-1}Ha\subset K$, the pullback induced by
\[
 G/H\longrightarrow G/K,\qquad gH\longmapsto gaK
\]
corresponds, with real coefficients, to restriction along
$H\to K$, $h\mapsto a^{-1}ha$.
\end{lemma}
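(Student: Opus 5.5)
The proof is a measurable Shapiro lemma, built in the usual way from an identification of cochain complexes. There are three parts.

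\emph{Step 1: reduce to a complex over $K$.} By the exponential law $L^0(Y;L^0(Z))=L^0(Y\times Z)$, a $G$-equivariant measurable cochain $c:G^{q+1}\to L^0(Y\times G/K)$ is a class $F\in L^0(G^{q+1}\times Y\times G/K)$ satisfying
\[
F(hg_0,\ldots,hg_q;y,h\xi)=F(g_0,\ldots,g_q;y,\xi).
\]
Choose a Borel section $\sigma:G/K\to G$; it exists because $G$ is second countable. This gives a measurable isomorphism $G\cong G/K\times K$ of measure classes. Using the invariance, evaluate at $\xi=eK$. That evaluation is not meaningful for $L^0$ classes, so we proceed as in \Cref{lem:descent}. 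Apply the diagonal change of variables
\[
(g_\bullet,\xi)\mapsto(\sigma(\xi)^{-1}g_\bullet,\xi).
\]
It identifies invariant classes on $G^{q+1}\times Y\times G/K$ with classes on $G^{q+1}\times Y$ that are invariant only under $K$, the stabilizer of the base point. This uses the Fubini argument of \Cref{lem:descent}, carried out on the product $G/K\times(\text{fibre})$. The resulting complex is
\[
L^0(G^{q+1};A_Y)^K,
\]
with the deletion differential, and this identification commutes with deletion. So the left side of \eqref{eq:shapiro} is the cohomology of the homogeneous measurable complex of $K$, except that the cochains are defined on $G^{q+1}$ rather than $K^{q+1}$.

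\emph{Step 2: replace $G^{q+1}$ by $K^{q+1}$.} The spaces $L^0(G^{q+1};A_Y)$ form a resolution of $A_Y$ by measurably injective (relatively injective) $K$-modules. The contracting homotopy is the standard one: integrate the first variable against a probability measure equivalent to Haar measure. The same is true with $K^{q+1}$ in place of $G^{q+1}$. The fundamental lemma of Moore cohomology, as in Austin--Moore, then says that any two such resolutions compute $\Hm^\bullet(K;A_Y)$ canonically. An explicit comparison map is also available: restrict along $K\times G/K\to G$, $(k,\xi)\mapsto k\,\sigma(\xi)$, and integrate out $\xi$.

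This is where I expect the main obstacle: checking that the homotopy and these comparison maps are well defined on $L^0$ classes and remain measurable in $Y$. I would handle it by working throughout with $u=F/(1+|F|)$, as in \Cref{lem:descent}, and by using the Polish-module framework of Moore cohomology for $A_Y$.

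\emph{Finite $K$ and the face maps.} If $K$ is finite, averaging over $K$ is a contracting homotopy on the invariant complex, so the groups vanish for $q>0$. For the face-map claim, the map $gH\mapsto gaK$ pulls back functions on $Y\times G/K$ to functions on $Y\times G/H$. Choose the sections compatibly, so that $\sigma_K(gaK)$ is expressed through $\sigma_H(gH)\,a$. Through Step 1, the pullback becomes the cochain map
\[
c\mapsto\bigl[(g_\bullet)\mapsto c(g_\bullet a)\bigr]
\]
from $K$-invariant to $H$-invariant cochains on $G^{q+1}$, and this map intertwines the $H$-action with the $K$-action via $h\mapsto a^{-1}ha$. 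Right translation by $a$ is $G$-equivariant, and it induces the identity on any relatively injective resolution up to homotopy. By the fundamental lemma, the induced map in cohomology is therefore restriction along conjugation. With real coefficients, continuous cohomology agrees with $\Hm$ by Austin--Moore, so this is the ordinary restriction map.
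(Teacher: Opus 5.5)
Your Step 1 and the finite-group argument follow the paper. The change of variables $(g_\bullet,\xi)\mapsto(\sigma(\xi)^{-1}g_\bullet,\xi)$ inverts the paper's induction map $(I_K\varphi)(g_\bullet)(y,xK)=\varphi^\sharp(x^{-1}g_\bullet;y)$ and lands in the same ambient complex $L^0(G^{q+1}\times Y)^K$.

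The genuine error is in the face-map paragraph. Push your own Step 1 through the pullback:
\[
 (\Theta_a^*F)(g_\bullet;y,gH)=F(g_\bullet;y,gaK)=c\bigl(\sigma_K(gaK)^{-1}g_\bullet;y\bigr).
\]
Reading off the $H$-cochain at the base coset uses only $K$-invariance of $c$, since $\sigma_K(aK)\in aK$. It gives $c(a^{-1}g_\bullet)$, which is \emph{left} translation by $a^{-1}$. This is the paper's $R_a$, with $\Theta_a^*I_K=I_HR_a$.

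Your map $c\mapsto c(g_\bullet a)$ commutes with left multiplication, so it sends $K$-invariant cochains to $K$-invariant cochains. These are not $H$-invariant in general, because $H\subset aKa^{-1}$ rather than $H\subset K$. No intertwining via $h\mapsto a^{-1}ha$ occurs. So the chain ``right translation is $G$-equivariant, hence induces the identity, hence the map is restriction along conjugation'' never produces the conjugation.

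The repair is to observe that $c\mapsto c(a^{-1}g_\bullet)$ is the $G$-module action of $a$. It sends $K$-invariants to $aKa^{-1}$-invariants, and is then followed by inclusion into $H$-invariants. In the ambient model this is conjugation followed by restriction, which is the claim. Your right-translation remark is then only useful for passing to the fully conjugated form $c(a^{-1}g_\bullet a)$. The compatible sections $\sigma_K(gaK)=\sigma_H(gH)a$ you ask for do not exist when $H\ne aKa^{-1}$, since $G/H\to G/K$ is not injective. They are also unnecessary: on pointwise $K$-invariant representatives the formula does not depend on the section.

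Two smaller points in Step 2 also need changing.

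First, integrating the first variable against a probability measure is not defined on $L^0$, because elements need not be integrable. The substitution $u=F/(1+|F|)$ is nonlinear, so it cannot rescue a linear homotopy. Exactness of $L^0(G^{\bullet+1};A_Y)$ should instead come from Fubini slicing, as in \Cref{lem:row-exact}. The identification of the cohomology of its $K$-invariants with $\Hm^\bullet(K;A_Y)$ should come from Moore's characterization theorem, which the paper cites, via effaceability and dimension shifting. A chain-homotopy fundamental lemma borrowed from the continuous or bounded theory is not the right tool here.

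Second, your explicit comparison map has the same integrability problem. It also needs a transversal for the right cosets $Kg$. The map $(k,\xi)\mapsto k\sigma(\xi)$ is not a bijection $K\times G/K\to G$ in general, whereas $(k,\xi)\mapsto k\sigma(\xi)^{-1}$ is.
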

\begin{proof}
Give $L^0(G/K;A_Y)$ the induced Polish module topology, which the
exponential law identifies with convergence in measure on
$L^0(Y\times G/K)$. Equation \eqref{eq:shapiro} is Moore's
measurable induction theorem for Polish coefficients
\cite[Theorem~6]{Moore}; see also
\cite[Proposition~2.2]{MonodBoundary}. The homogeneous complex on the ambient group $G$,
\[
 C_G^q(K;A_Y)=L^0(G^{q+1}\times Y)^K
\]
computes $\Hm^q(K;A_Y)$; this is the homogeneous realization of
Moore's characterization theorem
\cite[Theorem~2]{Moore}, discussed in \cite[Section~4]{MonodBoundary}.
All stabilizers are thus treated on the same ambient space $G^{q+1}$.

By local sections of $G\to K\backslash G$ and \Cref{lem:descent},
a source class has a jointly measurable representative $\varphi^\sharp$
that is pointwise $K$-invariant. The induction map is
\begin{equation}\label{eq:induction-formula}
 (I_K\varphi)(g_0,\ldots,g_q)(y,xK)
 =\varphi^\sharp(x^{-1}g_0,\ldots,x^{-1}g_q;y).
\end{equation}
To verify that the formula is defined on equivalence classes, identify
the source quotient with
\[
 (Kg_0,g_0^{-1}g_1,\ldots,g_0^{-1}g_q;y),
\]
and the target quotient with
\[
 (g_0^{-1}g_1,\ldots,g_0^{-1}g_q;y,g_0^{-1}xK).
\]
The two quotient spaces are identified by inversion
$G/K\to K\backslash G$, $zK\mapsto Kz^{-1}$, and reordering the
factors. Local product charts show that this is a measure-class
isomorphism. It gives \eqref{eq:induction-formula} on pointwise invariant
representatives and proves representative-independence and bijectivity.
Deletion of any $G$-variable commutes with this formula.

For the last assertion, the corresponding ambient cochain map is
\begin{equation}\label{eq:ambient-face}
 (R_a\varphi)(g_0,\ldots,g_q)
 =\varphi(a^{-1}g_0,\ldots,a^{-1}g_q).
\end{equation}
Translation is nonsingular, and $a^{-1}Ha\subset K$ gives the required
invariance. The homogeneous-space map $\Theta_a:gH\mapsto gaK$ is
a smooth submersion, hence is nonsingular as well. On pointwise invariant
representatives, \eqref{eq:induction-formula} gives
\[
  \Theta_a^* I_K=I_H R_a,
\]
since both sides evaluate $\varphi^\sharp$ at
$(a^{-1}x^{-1}g_0,\ldots,a^{-1}x^{-1}g_q)$. Representative-independence
then gives this identity on $L^0$ classes. In the ambient model the map
$R_a$ induces the usual restriction-conjugation map, by naturality of
measurable cohomology.
All these maps act on equivalence classes on the ambient spaces.

If $K$ is finite, compute its cohomology in its own homogeneous complex
$\operatorname{Map}(K^{q+1},A_Y)^K$. The finite average
\[
 (sc)(k_0,\ldots,k_{q-1})
 =\frac1{|K|}\sum_{k\in K}c(k,k_0,\ldots,k_{q-1})
\]
is defined on every cochain and satisfies $ds+sd=1$ in positive
degrees. Thus the right side of \eqref{eq:shapiro} vanishes for $q>0$.
\end{proof}

\subsection{The measurable double complex}
Consider the array
\begin{equation}\label{eq:double-complex}
 \mathcal A^{p,q}=L^0(G^{q+1}\times X^p)^G,
 \qquad p,q\ge0,
\end{equation}
where $X^0$ is a point. The vertical differential $d_G$ deletes a group
variable; the horizontal differential is $(-1)^q\delta_X$, where
$\delta_X$ deletes a projective variable. The sign makes the two
differentials anticommute, so their sum has square zero.
The index $p$ counts projective points, not action-cochain degree:
$p$ points correspond to degree $p-1$.

The vertical-first spectral sequence has first page and differentials
\begin{equation}\label{eq:spectral-sequence}
 E_1^{p,q}=\Hm^q(G;L^0(X^p)),\qquad
 d_r:E_r^{p,q}\longrightarrow E_r^{p+r,q-r+1}.
\end{equation}

\begin{lemma}[Acyclicity of the total complex]\label{lem:row-exact}
Every augmented horizontal row of \eqref{eq:double-complex} is exact.
Consequently the spectral sequence \eqref{eq:spectral-sequence}
converges to zero.
\end{lemma}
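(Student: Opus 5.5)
For each fixed $q\ge0$ I will treat the augmented row
\[
 0\to L^0(G^{q+1})^G\to L^0(G^{q+1}\times X)^G\to L^0(G^{q+1}\times X^2)^G\to\cdots,
\]
with augmentation the pullback along $G^{q+1}\times X\to G^{q+1}$ and differential $\delta_X$, and I will build an explicit contracting homotopy. The plan is to remove the $G$-invariance first, so the homotopy can be defined on plain measurable functions and then checked to commute with $G$.

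The first step is descent. Since $G$ acts freely and transitively on the first factor $g_0$, \Cref{lem:descent} (applied in the variables $(g_0,\,g_0^{-1}g_1,\dots,g_0^{-1}g_q,\,g_0^{-1}x_1,\dots,g_0^{-1}x_p)$) identifies
\[
 L^0(G^{q+1}\times X^p)^G\cong L^0(G^q\times X^p),
\]
and $\delta_X$ becomes the deletion differential in the $X$-variables with $G^q$ as inert parameters. Without descent it is cleaner to work equivariantly: fix a probability measure $\mu$ on $X$ in the Fubini--Study class, and define the cone operator
\[
 (hc)(\mathbf g;x_1,\dots,x_{p-1})=\int_X c(\mathbf g;y,x_1,\dots,x_{p-1})\,d\mu_{g_0}(y),
\]
where $\mu_{g_0}=(g_0)_*\mu$. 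Using the translated measure $\mu_{g_0}$ makes $h$ $G$-equivariant, because $g_0$ transforms as $g_0\mapsto kg_0$. The standard cone computation $\delta h+h\delta=\mathrm{id}$ (together with $h\circ\varepsilon=\mathrm{id}$ in the augmentation degree) then holds formally, since integrating the coboundary over the new first vertex against a probability measure reproduces the usual prism identity.

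The main obstacle is that cochains are only in $L^0$, so $\int c\,d\mu$ may diverge. I will handle this by the truncation trick used in \Cref{lem:descent}. Either show that exactness in $L^0$ follows from exactness for a bounded-transform homotopy, or, more robustly, replace integration by a measurable choice of point. Concretely, since $\mu_{g_0}$ has full support and smooth class, I would take instead
\[
 (hc)(\mathbf g;\mathbf x)=c(\mathbf g;\,g_0\,o,\,\mathbf x)
\]
for a fixed base point $o\in X$. This is a nonsingular restriction, because $(\mathbf g,\mathbf x)\mapsto(\mathbf g,g_0o,\mathbf x)$ is transverse: after descent it is evaluation at $y=g_0o$, where $g_0$ ranges over a free factor. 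It is therefore well defined on $L^0$ classes and equivariant, and the cone identity is an algebraic identity between pullbacks along nonsingular maps, so it holds almost everywhere. I expect the verification that this substitution is nonsingular (essentially that $G\times X^{p-1}\to X^p$, $(g_0,\mathbf x)\mapsto(g_0o,\mathbf x)$ is a submersion) to be the only delicate point, and it follows from transitivity of $G$ on $X$.

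Given row exactness, the horizontal-first spectral sequence of \eqref{eq:double-complex} has vanishing $E_1$, so the total complex is acyclic. The double complex lies in the first quadrant, which makes both spectral sequences converge to the total cohomology. Hence the vertical-first sequence \eqref{eq:spectral-sequence} converges to zero.
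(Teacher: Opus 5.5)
\textbf{There is a genuine gap, in the step you call delicate.} Your overall plan matches the paper: descend or work equivariantly, contract each row by a cone on a new first vertex, then compare the two spectral sequences of the first-quadrant double complex. The formal identity $\delta h+h\delta=\mathrm{id}$ is also correct. The problem is that $(hc)(\mathbf g;\mathbf x)=c(\mathbf g;g_0o,\mathbf x)$ is \emph{not} defined on $L^0$ classes.

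It is pullback along $\iota(\mathbf g;\mathbf x)=(\mathbf g;g_0o,\mathbf x)$. This map embeds $G^{q+1}\times X^{p-1}$ onto $E=\{(\mathbf g;y_1,\dots,y_p):y_1=g_0o\}$. That set has real codimension $\dim_{\RR}X=6$, so it is null, while $\iota^{-1}(E)$ is the whole source. The submersion $G\times X^{p-1}\to X^p$ you invoke forgets $g_0$, but $c$ still depends on $g_0$ through its group arguments. After descent, $hc$ is just the restriction of $\tilde c\in L^0(G^q\times X^p)$ to the fixed slice $\{y_1=o\}$.

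Concretely, $E$ is $G$-invariant and null. So $c$ and $c+\mathbf{1}_E$ represent the same invariant class (both pointwise invariant if $c$ is), yet $h(c+\mathbf{1}_E)=hc+1$. Equivalently, $\delta_Xc=0$ almost everywhere does not give $h(\delta_Xc)=0$, and the cone identity needs exactly that to yield $c=\delta_Xhc$. The averaging version fails for lack of integrability, as you note. The truncation $F/(1+|F|)$ is nonlinear, so it does not commute with $\delta_X$ and cannot be used inside a homotopy.

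\textbf{The fix is to choose the cone point generically, depending on the cocycle, and prove exactness directly.} This replaces the search for a linear contracting homotopy, and it is what the paper does. After descent to $L^0(G^q\times X^p)$, take $F$ with $\delta_XF=0$. By Fubini, for almost every $y\in X$ the slice $s_yF(\mathbf g;\mathbf x)=F(\mathbf g;y,\mathbf x)$ is a well-defined class, and the sliced cocycle identity holds almost everywhere. That identity reads $F=\delta_Xs_yF$.

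In your equivariant coordinates this primitive is $F(\mathbf g;g_0y,\mathbf x)$. The map $(\mathbf g,\mathbf x,y)\mapsto(\mathbf g;g_0y,\mathbf x)$ is a diffeomorphism, so almost every such slice is legitimate. Your formula therefore becomes correct once the fixed point $o$ is replaced by a generic point $y$ that depends on $F$. No single $y$ works for all cocycles at once, which is why you get exactness rather than a homotopy, but exactness is all the lemma needs.

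Your remaining steps, injectivity at $p=0$ and the comparison of the two spectral sequences, are fine and agree with the paper.
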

\begin{proof}
The diffeomorphism
\[
 (k;h_1,\ldots,h_q;\mathbf y)
 \longmapsto(k,kh_1,\ldots,kh_q;k\mathbf y)
\]
identifies the diagonal $G$-action with translation in $k$.
By \Cref{lem:descent},
$\mathcal A^{p,q}\cong L^0(G^q\times X^p)$; this is a quotient
identification, not evaluation at $g_0=e$. Deletion in $X$ commutes with
it. Thus a row is the augmented deletion complex in the $X$-variables.

Suppose $p\ge1$ and $\delta_X F=0$. For almost every $y\in X$,
Fubini permits slicing the first $X$-variable at $y$ and makes the
sliced cocycle identity valid almost everywhere. Fix such a $y$ and put
$s_yF(\mathbf g;\mathbf x)=F(\mathbf g;y,\mathbf x)$.
The deletion identity gives $F=\delta_Xs_yF$ almost everywhere.
This proves exactness; at $p=0$ it is immediate. The harmless factor
$(-1)^q$ does not affect exactness.

The array is first quadrant and each total degree contains finitely
many bidegrees. Therefore both filtrations of its direct-sum total
complex converge to its cohomology. Taking horizontal cohomology first
gives zero, and hence so does \eqref{eq:spectral-sequence}.
\end{proof}

The bottom row of $E_1$ is the invariant measurable action complex, so
\begin{equation}\label{eq:bottom-row}
 E_2^{p,0}=\Hm^{p-1}(G\curvearrowright X)\quad(p\ge2).
\end{equation}
In particular, the desired group occurs at $(p,q)=(5,0)$.

For $p=1,2,3$, \Cref{prop:stabilizers,lem:shapiro} identify the positive
rows with the cohomology of $Q,L,H$. For $p\ge4$, the product charts in
\Cref{prop:coordinates}, partitioned measurably over the base, give
\[
 \Omega_p\cong\cU_p\times G/Z(G)
\]
as a measure-class $G$-space. Shapiro with coefficient $L^0(\cU_p)$
and finite stabilizer $Z(G)$ gives zero in every positive degree.
Thus, for $q>0$,
\begin{equation}\label{eq:positive-rows}
 \begin{aligned}
 E_1^{0,q}&=\Hc^q(G),& E_1^{1,q}&=\Hc^q(Q),\\
 E_1^{2,q}&=\Hc^q(L),& E_1^{3,q}&=\Hc^q(H),\\
 E_1^{p,q}&=0\quad(p\ge4).
 \end{aligned}
\end{equation}
Austin--Moore identifies the real-coefficient groups in columns zero
through three with continuous cohomology. The higher columns vanish
by finite-group averaging with coefficients in $L^0(\cU_p)$.

\subsection{The degree-four transgression}
Choose the base configurations
\[
 \boldsymbol\xi_0=*,\quad\boldsymbol\xi_1=([e_1]),\quad
 \boldsymbol\xi_2=([e_1],[f_1]),\quad
 \boldsymbol\xi_3=([e_1],[f_1],[e_1+f_1+e_2]),
\]
with stabilizers $K_0=G,K_1=Q,K_2=L,K_3=H$. For each deletion from
$\boldsymbol\xi_p$, choose $a_{p,i}\in G$ with
$\partial_i\boldsymbol\xi_p=a_{p,i}\boldsymbol\xi_{p-1}$.
For $p=1$ choose $a_{1,0}=e$. The face map of homogeneous orbits is
$gK_p\mapsto ga_{p,i}K_{p-1}$, and the subgroup map is
\[
 \jmath_{p,i}:K_p\to K_{p-1},\qquad h\mapsto a_{p,i}^{-1}ha_{p,i}.
\]
By \Cref{lem:shapiro}, including the horizontal sign in
\eqref{eq:double-complex},
\begin{equation}\label{eq:first-differential}
 d_1^{p-1,q}=(-1)^q\sum_{i=0}^{p-1}(-1)^i\jmath_{p,i}^*
 \qquad(1\le p\le3).
\end{equation}
This also holds under the natural comparison with continuous cohomology.
In particular the first map is $(-1)^q\operatorname{res}_Q^G$;
in degree one all three terms of the map from $L$ to $H$ are zero.

By \Cref{lem:stabilizer-cohomology}, the first page in vertical degrees
one through four is
\begin{equation}\label{eq:E1-table}
\begin{array}{c|ccccc}
 &p=0&p=1&p=2&p=3&p\ge4\\
 &G&Q&L&H&\\ \hline
 q=4&0&\RR&\RR&0&0\\
 q=3&\RR&\RR&\RR&0&0\\
 q=2&0&0&0&\RR&0\\
 q=1&0&\RR&\RR&\RR^2&0.
\end{array}
\end{equation}
The three zero face restrictions give
\begin{equation}\label{eq:surviving-term}
 E_2^{3,1}=\Hc^1(H)=\Hom_{\RR}(\CC,\RR).
\end{equation}
There is no outgoing $d_1$, since column four vanishes in positive
degrees. Also
\begin{equation}\label{eq:other-terms-zero}
 E_2^{2,2}=E_2^{1,2}=E_2^{1,3}=E_2^{0,3}=E_2^{0,4}=0.
\end{equation}
The degree-two terms vanish already on $E_1$. The terms $(1,3)$ and
$(0,3)$ vanish because the incoming/outgoing map
$\Hc^3(G)\to\Hc^3(Q)$ is an isomorphism, with a minus sign that
does not change its kernel or image. Finally $\Hc^4(G)=0$.

\begin{proof}[Proof of \Cref{thm:ordinary}]
The only possible incoming differentials to the bottom term $(5,0)$ are
\[
 (3,1)\xrightarrow{d_2}(5,0),\quad
 (2,2)\xrightarrow{d_3}(5,0),\quad
 (1,3)\xrightarrow{d_4}(5,0),\quad
 (0,4)\xrightarrow{d_5}(5,0).
\]
The last three sources vanish by \eqref{eq:other-terms-zero}. There
is no outgoing differential from $(5,0)$ on pages $r\ge2$. For the source $(3,1)$, the only
possible incoming differentials on pages $r\ge2$ come from $(1,2)$ and
$(0,3)$, which also vanish. After $d_2$ there is no outgoing differential
from $(3,1)$, since the target would have negative vertical degree.

Consequently the eventual term at $(3,1)$ is the kernel of
$d_2^{3,1}$, and that at $(5,0)$ is its cokernel. Both must be zero
by \Cref{lem:row-exact}. Therefore
\[
 d_2^{3,1}:\Hom_{\RR}(\CC,\RR)
 \xrightarrow{\ \cong\ }\Hm^4(G\curvearrowright X;\RR).
\]
This is the required transgression isomorphism.
\end{proof}

In \Cref{sec:cocycle}, we construct a rational cocycle whose real
components represent a basis. Their independence follows from the
period calculation in \Cref{sec:completion}.

\section{A rational cocycle and finite orbit cycles}\label{sec:cocycle}

We construct a rational measurable four-cocycle and a family of finite
orbit cycles with uniformly bounded $\ell^1$-mass. Their periods give
the obstruction to bounded representatives.

\subsection{Affine-coordinate differences}
For lifts $v_i$ of generic projective points, set
$a_{ij}=\omega(v_i,v_j)$ and write
\[
 [ijkl]=a_{ij}a_{kl}-a_{ik}a_{jl}+a_{il}a_{jk}
\]
for the Pfaffian in the \emph{displayed} order. For five distinct indices
define
\begin{equation}\label{eq:affine-difference}
 r_{ijk}(l,m)=-\frac{a_{ik}a_{jk}[ijlm]}{[ijkl][ijkm]}.
\end{equation}
All denominators are nonzero on generic configurations.

\begin{lemma}\label{lem:affine-difference}
The quantity $r_{ijk}(l,m)$ is independent of the lifts and
$G$-invariant. For a fixed ordered triple $(i,j,k)$ it satisfies
\begin{equation}\label{eq:additivity}
 r_{ijk}(l,m)=-r_{ijk}(m,l),\qquad
 r_{ijk}(l,m)+r_{ijk}(m,n)=r_{ijk}(l,n).
\end{equation}
\end{lemma}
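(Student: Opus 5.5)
Invariance and lift-independence come from a direct scaling count; the additivity relation comes from a Plücker-type identity, which I would prove by reducing to a coordinate computation.

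\emph{Scaling and invariance.} Replace each lift $v_p$ by $\lambda_pv_p$. Then $a_{pq}$ scales by $\lambda_p\lambda_q$, and every Pfaffian $[pqrs]$ scales by $\lambda_p\lambda_q\lambda_r\lambda_s$, since each of its three monomials does. The numerator of \eqref{eq:affine-difference} scales by $(\lambda_i\lambda_k)(\lambda_j\lambda_k)(\lambda_i\lambda_j\lambda_l\lambda_m)$. The denominator scales by $(\lambda_i\lambda_j\lambda_k\lambda_l)(\lambda_i\lambda_j\lambda_k\lambda_m)$. Both equal $\lambda_i^2\lambda_j^2\lambda_k^2\lambda_l\lambda_m$, so the ratio is unchanged. $G$-invariance is immediate because symplectic maps preserve all pairings $a_{pq}$.

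\emph{Antisymmetry.} Swapping $l$ and $m$ leaves the denominator unchanged. The numerator changes sign because the Pfaffian is alternating in its four indices, so $[ijml]=-[ijlm]$.

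\emph{Additivity: interpretation.} The plan is to show that $r_{ijk}(l,m)=\phi(l)-\phi(m)$ for a function $\phi=\phi_{ijk}$ of one index, namely an affine coordinate. Fix lifts $v_i,v_j,v_k$. The map $v\mapsto(\omega(v_i,v),\omega(v_j,v))$ has kernel the two-dimensional subspace $P=\langle v_i,v_j\rangle^{\perp}$; for generic data $P$ is a symplectic plane complementary to $\langle v_i,v_j\rangle$.

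Normalize each $v_l$ so that $\omega(v_i,v_l)=a_{ik}$ and $\omega(v_j,v_l)=a_{jk}$. This is possible after also adding a multiple of $v_i,v_j$ only to a representative in $V/\langle v_i,v_j\rangle$, which is harmless since the formula is lift-independent. The cleanest approach, however, is a direct coordinate computation. Work on the open dense set, choose a symplectic basis adapted to $v_i,v_j$, and put $\phi(l)=a_{ik}a_{jk}[ijkl]^{-1}\cdot c_l$ for a suitable linear quantity $c_l$.

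\emph{Additivity: Plücker identity.} The key identity to establish, after clearing denominators, is
\[
 [ijlm][ijkn]+[ijmn][ijkl]=[ijln][ijkm].
\]
This is a three-term Plücker-type relation among Pfaffians sharing the pair $ij$. It should follow from the Pfaffian expansion of the degenerate $6\times6$ matrix on indices $i,j,k,l,m,n$, whose Pfaffian vanishes because the rank is four, as in \eqref{eq:Pi}. It should also use the relation for $2\times2$ minors in the quotient $V/\langle v_i,v_j\rangle^{\perp}$. Concretely, $[ijpq]$ equals $\omega$ restricted to $P$, evaluated on the projections of $v_p,v_q$ along $\langle v_i,v_j\rangle$, times a nonzero constant depending only on $i,j$. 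Then the displayed identity is the Plücker relation $\det(x_l,x_m)\det(x_k,x_n)+\det(x_m,x_n)\det(x_k,x_l)=\det(x_l,x_n)\det(x_k,x_m)$ for vectors $x_p$ in the two-dimensional space $P$. This is the standard three-term identity for $2\times2$ determinants.

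\emph{Main obstacle.} The hard step is verifying the projection formula $[ijpq]=a_{ij}\,\omega(\pi v_p,\pi v_q)$, where $\pi$ is the projection onto $P$ along $\langle v_i,v_j\rangle$. I would check it by writing $v_p=\pi v_p+\alpha_pv_i+\beta_pv_j$ and expanding each pairing. I expect the cross terms to cancel in the Pfaffian combination, leaving $a_{ij}\,\omega(\pi v_p,\pi v_q)$. Dividing the Plücker relation by $[ijkl][ijkm][ijkn]$ and multiplying by $-a_{ik}a_{jk}$ then gives exactly the additivity in \eqref{eq:additivity}.
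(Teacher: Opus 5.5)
Your proof is correct and follows essentially the paper's route: the same scaling count, and the same projection formula $[ijpq]=a_{ij}\,\omega(\bar v_p,\bar v_q)$ onto $\langle v_i,v_j\rangle^{\perp}$, for which your expansion $v_p=\pi v_p+\alpha_pv_i+\beta_pv_j$ does make the cross terms cancel. The only difference is packaging: the paper writes $r_{ijk}(l,m)=-\tfrac{a_{ik}a_{jk}}{a_{ij}}(t_l-t_m)$ with affine coordinates $t_p$ relative to $\bar v_k$, so antisymmetry and additivity are immediate, whereas you clear denominators and apply the three-term $2\times2$ determinant identity directly. The abandoned normalization paragraph and the appeal to the $6\times6$ Pfaffian are not needed, and the relevant plane is $\langle v_i,v_j\rangle^{\perp}\cong V/\langle v_i,v_j\rangle$, not $V/\langle v_i,v_j\rangle^{\perp}$.
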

\begin{proof}
Under $v_s\mapsto\lambda_sv_s$, the numerator and denominator of
\eqref{eq:affine-difference} both acquire the factor
$\lambda_i^2\lambda_j^2\lambda_k^2\lambda_l\lambda_m$.
This proves lift-independence; invariance follows from preservation of
$\omega$.

Project along $\langle v_i,v_j\rangle$ onto
$\langle v_i,v_j\rangle^\perp$, a two-dimensional symplectic space.
Denote the projected vectors by $\bar v_p$. A direct projection
calculation gives
\[
 \omega(\bar v_p,\bar v_q)=\frac{[ijpq]}{a_{ij}}.
\]
Choose $e=\bar v_k$ and $f$ with $\omega(e,f)=1$. For
$p\notin\{i,j,k\}$, write $\bar v_p=c_pe+d_pf$. Then
$d_p=[ijkp]/a_{ij}\ne0$ by genericity, so $t_p=c_p/d_p$ is defined.
Changing $f$ adds the same constant to these affine coordinates. Moreover,
\[
 t_l-t_m=\frac{a_{ij}[ijlm]}{[ijkl][ijkm]},\qquad
 r_{ijk}(l,m)=-\frac{a_{ik}a_{jk}}{a_{ij}}(t_l-t_m).
\]
Equation \eqref{eq:additivity} follows.
\end{proof}

Define a complex-valued function on generic five-tuples by alternation:
\begin{equation}\label{eq:R}
 \cR(\ell_0,\ldots,\ell_4)
 =\frac1{5!}\sum_{\sigma\in S_5}\sgn(\sigma)
 r_{\sigma(0)\sigma(1)\sigma(2)}(\sigma(3),\sigma(4)).
\end{equation}
It is rational, invariant, and alternating, hence continuous on the
generic locus.

\begin{proposition}\label{prop:cocycle}
The function $\cR$ satisfies $d\cR=0$ pointwise on generic six-tuples.
It therefore defines a complex measurable action four-cocycle, and every
$\lambda\in\Hom_{\RR}(\CC,\RR)$ defines a real class
$[\lambda\circ\cR]\in\Hm^4(G\curvearrowright X;\RR)$.
\end{proposition}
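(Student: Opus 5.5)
The plan is to reduce $d\cR=0$ to the additivity relation \eqref{eq:additivity} by a symmetrization argument. No explicit expansion of the Pfaffians should be needed.

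Throughout, work pointwise on the generic locus $\Omega_6$. Deleting a vertex preserves genericity, so every face of a generic six-tuple lies in $\Omega_5$, where $\cR$ is defined. Every quantity $r_{ijk}(l,m)$ with distinct indices in $\{0,\dots,5\}$ is then defined, because its denominators are principal $4\times4$ Pfaffians and pairings, which are nonzero on $\Omega_6$.

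\emph{Step 1: rewrite $d\cR$ as an alternation over $S_6$.} Define a function of six generic lines, ignoring $\ell_0$, by
\[
 h(\ell_0,\ldots,\ell_5)=r_{123}(4,5),
\]
with the indices referring to the positions $1,\dots,5$. By \eqref{eq:R}, $\cR(\ell_1,\dots,\ell_5)=\Alt_5$ of this term in the last five variables. For an alternating five-variable function $c$, the $i$-th face term of $dc$ equals $(-1)^i c(\ell_0,\dots,\widehat{\ell_i},\dots,\ell_5)$, and each of these is $c(\ell_1,\dots,\ell_5)$ composed with a permutation of sign $(-1)^i$. Hence
\[
 dc=6\,\Alt_6\bigl(c(\ell_1,\ldots,\ell_5)\bigr).
\]
Since $\Alt_6\circ\Alt_5=\Alt_6$, where $\Alt_5$ acts on the last five variables, this gives $d\cR=6\,\Alt_6h$. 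Checking this constant and sign bookkeeping is the only routine computation.

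\emph{Step 2: kill $\Alt_6h$ with the cocycle relation.} Let $\sigma$ be the $3$-cycle permuting the positions $4\to5\to0\to4$. It is even, so $\Alt_6(h\circ\sigma)=\Alt_6h$, and likewise for $\sigma^2$. The three functions $h,\ h\circ\sigma,\ h\circ\sigma^2$ are $r_{123}(4,5)$, $r_{123}(5,0)$ and $r_{123}(0,4)$ in some order. Their sum vanishes identically by \eqref{eq:additivity}: antisymmetry and additivity give $r(4,5)+r(5,0)=r(4,0)=-r(0,4)$. Therefore
\[
 3\,\Alt_6h=\Alt_6\bigl(r_{123}(4,5)+r_{123}(5,0)+r_{123}(0,4)\bigr)=0
\]
pointwise on $\Omega_6$. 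Combined with Step 1, this gives $d\cR=0$ pointwise.

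\emph{Step 3: the measurable statement.} By \Cref{lem:affine-difference}, $\cR$ is lift-independent and $G$-invariant. It is rational and finite on the conull set $\Omega_5$, so it defines a class in $L^0(X^5)^G$, with complex values. The identity $d\cR=0$ holds on the conull set $\Omega_6$, hence almost everywhere. For $\lambda\in\Hom_{\RR}(\CC,\RR)$, the map $\lambda$ is real-linear and commutes with $d$, so $\lambda\circ\cR$ is a real invariant measurable four-cocycle and defines $[\lambda\circ\cR]\in\Hm^4(G\curvearrowright X;\RR)$.

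The main obstacle I expect is Step 1: getting the normalization and signs of the reduction $d\cR=6\,\Alt_6h$ right, including that the implicit reindexing of $r_{ijk}$ under permutations is consistent. Once that is in place, Step 2 is immediate from the additivity lemma.
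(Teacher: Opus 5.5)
Your proposal is correct and takes essentially the paper's approach. The paper expands $d\cR$, groups terms by the ordered triple $(i,j,k)$, and kills each group with $r_{ijk}(l,m)-r_{ijk}(l,n)+r_{ijk}(m,n)=0$ from \eqref{eq:additivity}; your identity $d\cR=6\,\Alt_6h$ combined with the even $3$-cycle on the three complementary labels is a tidier way of organizing exactly that cancellation, and your measurability step agrees with the paper's (deletion preserves genericity, and $\Omega_6$ is conull).
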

\begin{proof}
Expand $d\cR$ and group terms by the ordered triple $(i,j,k)$.
If $l,m,n$ are the remaining three labels, their contribution, up to
a common factor and sign, is
\[
 r_{ijk}(l,m)-r_{ijk}(l,n)+r_{ijk}(m,n)=0.
\]
This proves the cocycle identity. Extending $\cR$ by zero on the
nongeneric locus gives a measurable cochain. Deletion is
nonsingular, so the pointwise identity on the generic set gives the
almost-everywhere identity in the action complex.
\end{proof}

For explicit evaluation, the $120$ summands can be grouped into $30$.
For a five-point Gram matrix let $P_s=\Pf(A_{\widehat s})$, with
remaining indices in increasing order. Then
\begin{equation}\label{eq:thirty}
 \cR(A)=\frac1{30}\sum_{k=0}^4
 \sum_{\{i,j\}\subset\{0,\ldots,4\}\setminus\{k\}}
 (-1)^{i+j}a_{ik}a_{jk}\frac{P_k}{P_lP_m},
\end{equation}
where $\{l,m\}$ is the complementary pair. Indeed, replacing ordered
Pfaffians in \eqref{eq:R} by the $P_s$ leaves the sign $(-1)^{i+j}$.
Interchanging $i,j$ or $l,m$ leaves the resulting summand unchanged,
so each occurs four times. Formula \eqref{eq:thirty} is the finite
rational expression used in the appendix.

\subsection{Configuration chains and orbit coinvariants}
A configuration $k$-simplex is a generic ordered $(k+1)$-tuple.
Let $\tuplechain_k$ be the real vector space generated by generic ordered
$(k+1)$-tuples, with the orientation relation
$[\sigma\mathbf x]=\sgn(\sigma)[\mathbf x]$. Let
\begin{equation}\label{eq:chain-quotient}
 \cochain_k^{\orb}=\tuplechain_k/
       \langle gc-c:g\in G,\ c\in\tuplechain_k\rangle,
 \qquad q_k:\tuplechain_k\to\cochain_k^{\orb}.
\end{equation}
We call $\tuplechain_k$ the space of alternating configuration chains
and its $G$-coinvariant quotient $\cochain_k^{\orb}$ the space of
\emph{orbit chains}. Their boundary is
\begin{equation}\label{eq:chain-boundary}
 \partial[x_0,\ldots,x_k]
 =\sum_{i=0}^k(-1)^i[x_0,\ldots,\widehat{x_i},\ldots,x_k],
\end{equation}
and $q$ commutes with $\partial$. A finite orbit four-cycle is an
 element $Z\in\cochain_4^{\orb}$ with $\partial Z=0$. Its $\ell^1$-mass is
\begin{equation}\label{eq:mass}
 \|Z\|_1=\inf\left\{\sum_j|c_j|:Z=\sum_jc_j[\mathbf x_j]\right\}.
\end{equation}

An \emph{unoriented orbit} is an orbit under $G$ and vertex
permutations, with permutation signs forgotten. If a tuple is
$G$-equivalent to an odd permutation of itself, we say that the orbit
admits an \emph{odd self-equivalence}. Its generator in
$\cochain_k^{\orb}$ equals its negative and is therefore zero.
Every other unoriented orbit contributes one real basis vector after an
orientation is chosen. The relations do not mix distinct unoriented
orbits, so \eqref{eq:mass} is the $\ell^1$-norm in this basis and
the infimum is attained. A pointwise invariant alternating cochain $c$
pairs with orbit chains by
\[
 \langle c,\textstyle\sum_jc_j[\mathbf x_j]\rangle
       =\sum_jc_jc(\mathbf x_j),
 \qquad
 \langle dc,Z\rangle=\langle c,\partial Z\rangle.
\]
The pairing vanishes on orbits admitting odd self-equivalences.
For a cocycle $c$ and a cycle $Z$, $\langle c,Z\rangle$ is their
\emph{period}. Pointwise evaluation does not define a pairing with
arbitrary measurable equivalence classes.

\subsection{Construction of finite orbit cycles}
Write $[z]$ for the oriented four-point orbit with coordinate
$z\in\cU_4$. The face table in the proof of \Cref{lem:dilation}
gives an identity of real orbit chains:
\begin{equation}\label{eq:special-boundary}
 \partial[\Phi_j(z)]=2[z]-[T_jz],\qquad j=1,2.
\end{equation}
The faces fixed by odd permutations vanish in the coinvariant space,
and the remaining odd symmetry contributes a minus sign. Consequently
\begin{equation}\label{eq:edge}
 \mathfrak e(z)=[\Phi_2(z)]-[\Phi_1(z)],\qquad
 \partial\mathfrak e(z)=[T_1z]-[T_2z].
\end{equation}
Thus $\mathfrak e(z)$ is a degree-four chain whose boundary is the
difference of the degree-three generators at the endpoints of a
$U$-step.

Let $(x,y)=(X,-S^2)$. The two parameters
$z_1=(-X,S)$ and $z_2=(XS,S)$ give consecutive $U$-steps, since
\[
 T_1z_1=(X,-S^2),\quad T_2z_1=T_1z_2,
 \quad T_2z_2=(XS^2,-S^2)=(-xy,y).
\]
Using the same square root in both steps gives $(-xy,y)$, rather
than $S_1(x,y)=(xy,y)$ in \Cref{lem:correction}. Composition with
the even symmetry $\sC$ preserves the oriented orbit class and gives
\begin{equation}\label{eq:M}
 \sM(x,y)=\sC(-xy,y)=(-1/y,x),\qquad \sM^4=\operatorname{id}.
\end{equation}
Four successive pairs of steps therefore close up to an orbit cycle.

Starting at $w_0=(-A^2,-B^2)$, with $A,B\ne0$, the successive
points $w_j=(x_j,y_j)$ and square roots are
\begin{equation}\label{eq:cycle-states}
\begin{array}{c|c|c}
 j&w_j&\text{root }S_j\text{ with }y_j=-S_j^2\\ \hline
 0&(-A^2,-B^2)&B\\
 1&(B^{-2},-A^2)&A\\
 2&(A^{-2},B^{-2})&i/B\\
 3&(-B^2,A^{-2})&i/A.
\end{array}
\end{equation}
Since $w_4=w_0$, the eight chains
\eqref{eq:edge} defined by these four pairs of steps sum to a cycle
$Z(A,B)$. Its boundary telescopes, so
\begin{equation}\label{eq:cycle-mass}
 \partial Z(A,B)=0,\qquad\|Z(A,B)\|_1\le16,
\end{equation}
whenever the displayed configurations are generic.

To write the period, define
\begin{align}
 F_j(a,b)&=\cR(\Phi_j(a,b)),&
 \mathcal G(a,b)&=F_2(a,b)-F_1(a,b),\label{eq:F-G}\\
 \mathcal Q(X,S)&=\mathcal G(-X,S)+\mathcal G(XS,S).\nonumber
\end{align}
Then
\begin{equation}\label{eq:period}
 \begin{split}
 \langle\cR,Z(A,B)\rangle=P(A,B)
 ={}&\mathcal Q(-A^2,B)+\mathcal Q(B^{-2},A)\\
   &+\mathcal Q(A^{-2},i/B)+\mathcal Q(-B^2,i/A).
 \end{split}
\end{equation}
The period is rational in $A,B$. We use the punctured neighborhood of
$(A,B)=(2,3)$ established below.

\begin{lemma}[Genericity of the period family]\label{lem:generic-line}
There is $\delta>0$ such that $Z(A,3)$ is a well-defined generic orbit
cycle whenever $0<|A-2|<\delta$.
\end{lemma}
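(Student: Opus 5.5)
The plan is to reduce genericity of $Z(A,3)$ to the nonvanishing of finitely many one-variable rational functions of $A$, and then use that such functions have isolated zeros. By construction, $Z(A,B)$ is the sum of the eight chains $\mathfrak e(z)=[\Phi_2(z)]-[\Phi_1(z)]$. Here $z$ runs over the parameters $z_1=(-X_j,S_j)$ and $z_2=(X_jS_j,S_j)$ for $j=0,\ldots,3$, where $w_j=(X_j,-S_j^2)$ is read off from \eqref{eq:cycle-states}. After substitution into \eqref{eq:discriminant}, admissibility makes $\Phi_1(z),\Phi_2(z)$ and all their faces generic. So I will show that each of the eight parameters lies in $\mathscr E_U^\circ$, i.e.\ $\Delta_U(z)\ne0$. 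That gives:
\begin{itemize}
\item every summand is a well-defined generic orbit chain;
\item the boundary identities \eqref{eq:special-boundary} and \eqref{eq:edge} hold;
\item the consecutive-step identities $T_2z_1=T_1z_2$ and $w_4=w_0$ hold, so the boundaries telescope to $\partial Z(A,3)=0$.
\end{itemize}
The intermediate four-point faces $w_j$ and $T_iz$ are then automatically in $\cU_4$, being faces of generic five-point configurations.

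Setting $B=3$, the eight parameters are explicit Laurent monomials in $A$ and $i$:
\[
 (A^2,3),\ (-3A^2,3),\ (-\tfrac19,A),\ (\tfrac A9,A),\ (-A^{-2},\tfrac i3),\ (\tfrac i3A^{-2},\tfrac i3),\ (9,\tfrac iA),\ (-\tfrac{9i}A,\tfrac iA).
\]
Substituting each into the seven factors $a$, $b$, $b-2$, $2b-1$, $a-b-1$, $a-b^2+1$, $1-ab-b^2$ of $\Delta_U$ gives $56$ Laurent polynomials in $A$. The key step is to check that none of them is identically zero. Each is either a nonzero constant or has a nonzero leading or trailing monomial in $A$. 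For example, for $(-\tfrac19,A)$ the factor $a-b-1=-A-\tfrac{10}9$ is nonconstant, and for $(A^2,3)$ the factor $b-2=1$ is a nonzero constant. The only potential trap is cancellation of all monomials in a factor, and a glance at degrees excludes it in each case.

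Some of these functions do vanish at $A=2$. For $(-\tfrac19,A)$ the factor $b-2=A-2$ vanishes there, and for $(A^2,3)$ so does $a-b-1=A^2-4$. This is why the statement is about a punctured neighbourhood, and I will not try to make $A=2$ itself admissible. Since each of the finitely many nonzero rational functions has only finitely many zeros in $\CC$, I can choose $\delta>0$ smaller than the distance from $2$ to every zero other than $2$ itself. Then $\Delta_U\ne0$ at all eight parameters whenever $0<|A-2|<\delta$. Finally, the mass bound $\|Z(A,3)\|_1\le16$ is immediate from the eight two-term chains.

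The main obstacle is bookkeeping rather than ideas. I must make sure the admissibility locus \eqref{eq:discriminant} really covers every genericity condition used in forming $Z$, including the faces that vanish in the coinvariant space. If any doubt remains, I would directly substitute $\Phi_j(z)$ into the five Pfaffians \eqref{eq:five-pfaffians} and the coordinate entries. Each of these again yields a one-variable rational function of $A$ that is not identically zero, and the same isolated-zeros argument applies.
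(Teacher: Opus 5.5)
Your proposal is correct and follows the paper's proof: you reduce genericity of $Z(A,3)$ to $\Delta_U\ne0$ at the eight explicit correspondence parameters, note that each substituted factor is a rational function of $A$ that is not identically zero, and take a punctured disk about $2$ that avoids the remaining zeros, with $A=0$ excluded automatically since factors such as $a=A^2$ vanish there. The only cosmetic difference is that the paper certifies non-vanishing by evaluating all eight substituted functions at $A=3$, whereas you inspect the $56$ factors monomial by monomial.
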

\begin{proof}
The eight correspondence parameters in \eqref{eq:period} are
\begin{equation}\label{eq:eight-pairs}
 \begin{gathered}
 (A^2,3),\quad(-3A^2,3),\quad(-1/9,A),\quad(A/9,A),\\
 (-A^{-2},i/3),\quad(i/(3A^2),i/3),\quad
 (9,i/A),\quad(-9i/A,i/A).
 \end{gathered}
\end{equation}
For each pair $(a(A),b(A))$, substitute into $\Delta_U$ from
\eqref{eq:discriminant}. The result is a rational function of $A$.
None is identically zero: at $A=3$ the first four have $b=3$ and
$a\in\{9,-27,-1/9,1/3\}$; the last four have $b=i/3$ and
$a\in\{-1/9,i/27,9,-3i\}$. For these nonzero $b$, neither $b-2$
nor $2b-1$ vanishes. The remaining excluded $a$-values are
\[
 \{b+1,b^2-1,(1-b^2)/b\}
 =\begin{cases}
 \{4,8,-8/3\},&b=3,\\
 \{1+i/3,-10/9,-10i/3\},&b=i/3.
 \end{cases}
\]
None occurs in the corresponding list. Thus all eight substituted admissibility functions
are nonzero at $A=3$. A nonzero rational function of one variable has
only finitely many zeros and poles. Choose a disk about $2$ avoiding
all of them except possibly $2$ itself, and also avoiding $0$.
All eight correspondence parameters, and hence all sixteen five-tuples
and their faces,
are generic throughout the punctured disk.
\end{proof}

\section{Averaging of orbit cycles}\label{sec:period-test}

To compare periods with measurable cohomology, we replace finite orbit
cycles by absolutely continuous configuration measure cycles with
controlled total variation.

\begin{samepage}
\begin{theorem}[Measurable period estimate]\label{thm:period-test}
Let $C$ be a real-valued invariant alternating function, continuous on
$\Omega_5$, with $dC=0$ pointwise on $\Omega_6$. Suppose
\begin{equation}\label{eq:bounded-representative}
 C=F+df\quad\text{almost everywhere},
\end{equation}
where $F\in L^\infty(X^5)^G$ and $f\in L^0(X^4)^G$. Then every
finite generic real orbit four-cycle $Z$ satisfies
\begin{equation}\label{eq:period-bound}
 |\langle C,Z\rangle|\le20\|Z\|_1\|F\|_\infty.
\end{equation}
\end{theorem}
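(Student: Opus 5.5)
We will replace each vertex of the finite orbit cycle by a small absolutely continuous probability measure, so that the period of $C$ becomes an integral against a signed measure. The boundedness of $F$ then controls that integral, and the term $df$ integrates to zero because the averaged chain is a genuine measure cycle.

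\textbf{Construction.} Write $Z=\sum_j c_j[\mathbf x_j]$ with $\sum_j|c_j|=\|Z\|_1$, where each $\mathbf x_j\in\Omega_5$. Since the orbit chains are coinvariants, $\partial Z=0$ in $\cochain_3^{\orb}$ means the following. After choosing $G$-translates and reorderings, the faces of the $\mathbf x_j$ cancel in pairs only up to group elements. Concretely, if $\partial_i\mathbf x_j$ cancels against $\partial_{i'}\mathbf x_{j'}$, there is $g\in G$ with $g\,\partial_i\mathbf x_j=\partial_{i'}\mathbf x_{j'}$, possibly after a permutation with the appropriate sign.

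To realize the cancellation at the level of measures, assign to each four-point face orbit a single smooth compactly supported probability density $\nu$ on $\Omega_4$. Use the product chart \eqref{eq:product-chart}: take $\nu$ to be the product of a smooth bump on $G_{\eff}$ and a bump near the face coordinate in $\cU_4$. We then need measures $\mu_j$ on $\Omega_5$, absolutely continuous, such that $(\delta_i)_*\mu_j$ equals the assigned face measure, transported by the relevant $g$, for every $i$. This is the two-cone construction. Over a fixed four-point configuration, the fifth point ranges over an open set of $X$. Starting from a face measure $\nu$, we form the "cone" $\mu=\nu\otimes\kappa$, where $\kappa$ is a smooth kernel in the fifth point.

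A single cone has the prescribed face measure only on the face opposite the apex. The other four faces are pushforwards that are not a priori the assigned densities. The second cone, together with the differences, is needed to correct this. We will build the correction by the slicing argument of \Cref{lem:row-exact}, in the form of measures: cone off from a fixed auxiliary absolutely continuous point distribution $\kappa$. The resulting chain $\kappa * (\cdot)$ satisfies the homotopy identity
\[
\partial(\kappa*\sigma)+\kappa*\partial\sigma=\sigma .
\]
Apply it to the discrepancy $\sigma_j$ between the actual pushforward faces of $\nu$-smeared simplices and the assigned face densities. This discrepancy is a four-dimensional three-chain of total variation at most $5\sum_j|c_j|$, and it is a boundary. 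Coning it gives a correcting measure chain whose total variation is bounded by a fixed multiple of $\|Z\|_1$. The bookkeeping should yield the constant $20=4\cdot 5$, or similar.

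\textbf{Evaluating the period.} The final object is a real measure four-cycle $\mu$ on $X^5$ (alternated). It is absolutely continuous, satisfies $\partial\mu=0$ as a measure, has $\|\mu\|_{\TV}\le20\|Z\|_1$, and its $G$-projection is $Z$ in the sense that continuous invariant cocycles satisfy $\langle C,\mu\rangle=\langle C,Z\rangle$. The correction terms pair to zero against $C$ because $dC=0$ pointwise and $C$ is continuous. More precisely, the cone-correction contributes $\langle dC,\kappa*\sigma\rangle$-type terms, which vanish.

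Absolute continuity lets us substitute \eqref{eq:bounded-representative} inside the integral:
\[
\langle C,\mu\rangle=\int F\,d\mu+\int df\,d\mu .
\]
The second term equals $\int f\,d(\partial\mu)=0$, provided $f$ is integrable against the face marginals. Local integrability is not automatic for $f\in L^0$. We will first regularize as in \Cref{prop:regularization}, replacing $f$ by a smooth $Sf$ at the cost of changing $F$ by $d(f-Sf)$. That cost is bounded but possibly worsens the constant. Alternatively, we choose the cone densities with compact support inside a set where $f$ is bounded, which is possible by \Cref{lem:local-bounded}-type truncation. We then obtain $|\langle C,Z\rangle|\le\|F\|_\infty\|\mu\|_{\TV}$.

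\textbf{Main obstacle.} The hard part is the two-cone construction itself. We need an absolutely continuous measure cycle that descends exactly to $Z$ while keeping both its total variation and its compact support controlled. Matching faces across $G$-translates requires the bumps on $G_{\eff}$ to be transported consistently. I would handle this by working in the descended picture: measures on $\Omega_p$ of the form (Haar-bump on $G_{\eff}$) $\times$ (density on $\cU_p$). In that picture the group translation is absorbed by invariance of the Haar factor under the chart identification.
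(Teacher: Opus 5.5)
\textbf{There is a genuine gap: the averaging construction is not supplied, and the mechanism you sketch for it fails.} Your outer frame is the paper's. You produce an absolutely continuous measure cycle $\nu$ with $\|\nu\|_{\TV}\le20\|Z\|_1$ and $\int C\,d\nu=\langle C,Z\rangle$, substitute $C=F+df$, and kill $\int df\,d\nu$ by boundary--coboundary duality. The construction of $\nu$ is the step you yourself flag as the main obstacle, and it does not go through as described.

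Cones do not commute with the $G$-quotient, since $gs_y=s_{gy}g$. So coning with one fixed apex distribution $\kappa$ is not a well-defined operator on orbit chains or on measures on $\cU_4$. The homotopy $\partial(\kappa*\sigma)+\kappa*\partial\sigma=\sigma$ is therefore not available there, and $X$ carries no $G$-invariant probability measure that could repair this. A compactly supported bump on $G_{\eff}$ is not translation invariant, so the gluing elements $g$ are not absorbed. If instead you want a genuine measure cycle on $X^5$, the faces must cancel with no group elements at all. That amounts to lifting $Z$ to a configuration cycle, which you do not construct and which need not exist. Finally, smearing face orbits by bumps in $\cU_4$ and subtracting a cone on the discrepancy gives no chain $K$ with $\nu-\jmath_4(Z)=\partial K$. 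So neither $\partial\nu=0$ nor the exact period identity follows, and the constant $20$ is unexplained.

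\textbf{The missing idea is to build the homotopy on finite orbit chains, with transported apexes, before any averaging.}
\begin{itemize}
\item Split face weights so that face copies cancel in equal-weight pairs $\tau_{e'}=g_e\pi_e\tau_e$ (\Cref{lem:face-pairing}).
\item Cone each copy with its own apex $y_e$, and its partner with the transported apex $y_{e'}=g_ey_e$. This gives $\sum_\sigma q_4(H_\sigma)=0$ and $\sum_\sigma q_3(B_\sigma)=0$.
\item A second cone with independent apexes $x_\sigma$ yields, for every parameter $\mathbf p$, an orbit cycle $SZ(\mathbf p)$ of mass at most $5\cdot4\cdot\|Z\|_1$ with $Z-SZ(\mathbf p)=\partial KZ(\mathbf p)$. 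Since $dC=0$ pointwise, its period is exactly $\langle C,Z\rangle$.
\item Every simplex of $SZ(\mathbf p)$ has two free vertices and a fixed generic triple. \Cref{lem:two-free} then makes each $p_\alpha$ a submersion onto $\cU_5$. One free vertex could not suffice, since $\dim_{\CC}X=3<5=\dim_{\CC}\cU_5$.
\item Averaging over $\mathbf p$ produces $\nu$. The cycle condition, the period and the mass bound all hold pointwise in $\mathbf p$, so they pass to the average.
\end{itemize}

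\textbf{Integrability of $f$.} Your first suggestion fails: \Cref{prop:regularization} needs $Df$ bounded, but $df=C-F$ is unbounded. Your alternative can be repaired. Continuity of $C$ bounds $df$ on compacta, so the proof of \Cref{lem:local-bounded} gives $f\in L^\infty_{\mathrm{loc}}$. The paper instead divides the parameter density by $1+W$, where $W$ sums $|f|$ over all face evaluations. This makes every face term integrable with no regularity assumption on $f$.
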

\end{samepage}

The construction uses simplices with two variable vertices and a fixed
generic triple. The associated maps to $\cU_5$ are submersions.
Averaging a homologous family of such cycles yields the measure cycle
used in the estimate.

\subsection{The fixed-triple configuration map}
For a fixed generic ordered triple $\mathbf z=(z_0,z_1,z_2)$, let
\[
 U_{\mathbf z}=\{(x,y)\in X^2:(x,y,z_0,z_1,z_2)\in\Omega_5\}.
\]
This is a nonempty open set, and the quotient map on it is
\begin{equation}\label{eq:two-free-map}
 \beta_{\mathbf z}:U_{\mathbf z}\to\cU_5,
 \qquad (x,y)\mapsto\gamma_5(x,y,z_0,z_1,z_2).
\end{equation}

\begin{lemma}\label{lem:two-free}
The map $\beta_{\mathbf z}$ is surjective and is a smooth submersion
at every point of its domain.
\end{lemma}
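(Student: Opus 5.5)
Surjectivity reduces to the orbit classification in \Cref{prop:coordinates}. Given a normalized five-point matrix $B\in\cU_5$, realize it by a generic tuple $(\ell_0,\ldots,\ell_4)\in\Omega_5$. Reorder so that the last three lines form the triple. More precisely, take the generic tuple $(\ell_3,\ell_4,\ell_0,\ell_1,\ell_2)$, up to the fixed reordering used in \eqref{eq:two-free-map}, and look at its last three entries. By \Cref{prop:stabilizers}, $\Omega_3$ is a single $G$-orbit, so some $g\in G$ carries $(\ell_0,\ell_1,\ell_2)$ to $\mathbf z$. Then $(g\ell_3,g\ell_4)\in U_{\mathbf z}$, and this pair maps to $B$ because $\gamma_5$ is $G$-invariant. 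Here I read \eqref{eq:two-free-map} with the free points in the first two slots, so it is the tuple $(x,y,z_0,z_1,z_2)$ that must realize $B$. That is arranged by choosing the realizing tuple in that order, since every point of $\cU_5$ is $\gamma_5$ of some element of $\Omega_5$.

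For submersivity I plan a dimension count combined with the product chart \eqref{eq:product-chart}. Both $U_{\mathbf z}$ (open in $X^2$) and $\cU_5$ (open in $\CC^5$, by \Cref{prop:coordinates} with $3n-10=5$) have complex dimension $6$ and $5$ respectively. It therefore suffices to show that $d\beta_{\mathbf z}$ has a one-dimensional kernel, or equivalently that each fibre is a smooth curve. Near a point $\mathbf x=(x,y,\mathbf z)\in\Omega_5$, the chart identifies $\gamma_5^{-1}(W)$ with $G_{\eff}\times W$. Under this identification, $U_{\mathbf z}$ is the slice where the last three coordinates equal $\mathbf z$. The fibre of $\beta_{\mathbf z}$ through $(x,y)$ is then $\{g(x,y): g\in\Stab_G(\mathbf z)\}$, and this stabilizer is conjugate to $H\cong\{\pm1\}\times\CC$, which has complex dimension one. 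Freeness of the effective action on $\Omega_5$ makes the orbit map $\Stab_G(\mathbf z)/\{\pm I\}\to U_{\mathbf z}$ an immersion. So the fibre is a one-dimensional submanifold, which gives the kernel bound at the level of fibres.

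Fibre dimension alone does not give rank $5$ for the differential, so I will argue infinitesimally. Write $T_{\mathbf x}\Omega_5=\mathfrak g\cdot\mathbf x\oplus(\text{horizontal})$, where $d\gamma_5$ is surjective with kernel exactly $\mathfrak g\cdot\mathbf x$; this follows from the chart, since the $G_{\eff}$ factor is the kernel. Given $\dot B\in T\cU_5$, choose a tangent vector $\dot{\mathbf x}=(\dot x,\dot y,\dot{\mathbf z})$ with $d\gamma_5\dot{\mathbf x}=\dot B$. The map $\mathfrak g\to T_{\mathbf z}X^3$, $\xi\mapsto\xi\cdot\mathbf z$, is surjective. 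Indeed, $\Omega_3$ is a single orbit, so the orbit map $G\to\Omega_3$ is a submersion, its image being open in $X^3$ of dimension $9=10-1$. Pick $\xi$ with $\xi\cdot\mathbf z=\dot{\mathbf z}$. Then $\dot{\mathbf x}-\xi\cdot\mathbf x$ has zero $\mathbf z$-component, so it is tangent to $U_{\mathbf z}$, and it still maps to $\dot B$. This shows $d\beta_{\mathbf z}$ is surjective, and I take this step as the core of the proof. The earlier fibre-dimension discussion only serves as a consistency check.

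The main obstacle is the claim that the orbit map of $G$ onto $\Omega_3$ is a submersion. I will deduce it from homogeneity: $G/H\to\Omega_3$ is a bijective equivariant smooth map between manifolds of equal dimension $9$, and by Sard and equivariance its rank is constant and full. A second point needs care, namely that $\gamma_5$ has kernel exactly the orbit directions; this is the product chart statement of \Cref{prop:coordinates}. Smoothness of $\beta_{\mathbf z}$ itself is immediate, since it is the restriction of $\gamma_5$ to a slice.
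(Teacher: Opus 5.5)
Your proposal is correct and follows the paper's own argument. Surjectivity comes from transitivity of $G$ on $\Omega_3$, once the realizing tuple is taken in the order $(x,y,z_0,z_1,z_2)$. Submersivity comes from lifting a tangent vector through $d\gamma_5$ and subtracting an infinitesimal group motion $\xi\cdot\mathbf x$ with $\xi\cdot\mathbf z=\dot{\mathbf z}$, which is possible because the orbit map onto $\Omega_3$ is a submersion. Your fibre-dimension check and the exact identification of $\ker d\gamma_5$ are not needed. The argument only uses $d\gamma_5(\xi\cdot\mathbf x)=0$, which follows from $G$-invariance of $\gamma_5$.
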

\begin{proof}
Any generic five-tuple has a generic last triple. By
\Cref{prop:stabilizers}, some element of $G$ moves that triple to
$\mathbf z$. This proves surjectivity.

Let $p:\Omega_5\to\Omega_3$ be the projection onto the last triple. It is the
restriction of a coordinate projection to an open set, so it is a
submersion. Its fibre over $\mathbf z$ is $U_{\mathbf z}$, with the
first two vertices as coordinates, and its fibre tangent space is
$\ker dp$. Given a tangent vector
$\eta\in T_{\gamma_5(\mathbf v)}\cU_5$, lift it to
$w\in T_{\mathbf v}\Omega_5$ by the submersion
$\gamma_5$ of \Cref{prop:coordinates}. For the transitive smooth Lie
group action on $\Omega_3$, the orbit map is a submersion. Thus there
is $\xi\in\mathfrak g$ whose infinitesimal action on $p(\mathbf v)$
is $dp(w)$. Set
\[
 w'=w-\xi_{\Omega_5}(\mathbf v).
\]
Then $dp(w')=0$, so $w'$ is tangent to the fixed-triple fibre.
Invariance of $\gamma_5$ gives $d\gamma_5(w')=\eta$. Thus the
restriction to that fibre is submersive.
\end{proof}

\subsection{Cone identities and face pairings}
For a fixed point $y$, the cone on a tuple is
$s_y(t_0,\ldots,t_k)=(y,t_0,\ldots,t_k)$, whenever the new tuple is
generic. In the configuration-chain complex,
\begin{equation}\label{eq:cone}
 \partial s_y+s_y\partial=1.
\end{equation}
The identity follows by deletion. In degree zero, use the augmentation
$\tuplechain_{-1}=\RR$, $\partial[x]=1$, $s_y(1)=[y]$.

The equivariance identity $g s_y=s_{gy}g$ requires transport of the
apex. Accordingly, all cone operations below are performed on specified
configuration representatives before applying $q_k$, and paired faces
receive transported cone points.

Choose a presentation of a nonzero orbit cycle realizing its $\ell^1$-mass
\begin{equation}\label{eq:cycle-presentation}
 Z=\sum_\sigma\varepsilon_\sigma w_\sigma[\sigma],
 \qquad w_\sigma>0,\quad\varepsilon_\sigma\in\{\pm1\},\quad
 \sum_\sigma w_\sigma=\|Z\|_1.
\end{equation}
Such a presentation exists by the oriented-basis description following
\eqref{eq:mass}. The notation $\sigma$ here denotes an ordered
five-tuple. Its $i$th face is $\tau_{\sigma i}=\partial_i\sigma$,
with incidence sign $\varepsilon_\sigma(-1)^i$.

\begin{lemma}[Pairing weighted faces]\label{lem:face-pairing}
Each face occurrence $(\sigma,i)$ can be split into finitely many
weighted copies with positive weights $r_e$ summing to $w_\sigma$.
Set $\tau_e=\tau_{\sigma i}$ and
$a_e=\varepsilon_\sigma(-1)^i$ for such a copy. The copies can be
paired, with equal weights in each pair, so that
\begin{equation}\label{eq:face-pair}
 \tau_{e'}=g_e(\pi_e\tau_e),\qquad
 a_{e'}\sgn(\pi_e)=-a_e,
\end{equation}
for some $g_e\in G$ and $\pi_e\in S_4$.
\end{lemma}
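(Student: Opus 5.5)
The plan is to deduce the pairing from the vanishing of $\partial Z$ in the coinvariant space $\cochain_3^{\orb}$. Use the oriented-basis description given after \eqref{eq:mass}. Group the face occurrences $(\sigma,i)$ according to the unoriented orbit of $\tau_{\sigma i}$, and fix a reference tuple $\tau^{\mathcal O}$ for each unoriented orbit $\mathcal O$. Every face in $\mathcal O$ can then be written as $\tau_{\sigma i}=h\,\pi\,\tau^{\mathcal O}$ with $h\in G$ and $\pi\in S_4$. Define its \emph{normalized sign} to be $b=a\,\sgn(\pi)$, where $a=\varepsilon_\sigma(-1)^i$. If $\mathcal O$ admits no odd self-equivalence, $\sgn(\pi)$ is independent of the choice of $(h,\pi)$. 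In that case the coefficient of the basis vector $[\tau^{\mathcal O}]$ in $\partial Z$ is $\sum b\,w_\sigma$, summed over the occurrences in $\mathcal O$, and $\partial Z=0$ forces this sum to vanish. So within $\mathcal O$, the total weight of occurrences with $b=+1$ equals the total weight of those with $b=-1$.

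Next I carry out the pairing as a transport problem. List the positive occurrences with weights $w^+_1,\dots,w^+_m$ and the negative ones with $w^-_1,\dots,w^-_n$; the two lists have equal totals. Apply the northwest-corner rule: walk along both lists at once, cutting each interval at the union of the partial-sum breakpoints. This splits every occurrence into finitely many pieces, with at most $m+n-1$ matched pairs. Each pair joins a positive piece and a negative piece of equal weight, and the pieces of each occurrence have weights summing to $w_\sigma$. For a pair $(e,e')$ with $\tau_e=h\pi\tau^{\mathcal O}$ and $\tau_{e'}=h'\pi'\tau^{\mathcal O}$, set $g_e=h'h^{-1}$ and $\pi_e=\pi'\pi^{-1}$. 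Since $G$ and vertex permutations commute, this gives $\tau_{e'}=g_e\pi_e\tau_e$. Moreover $a_{e'}\sgn(\pi_e)=a_{e'}\sgn(\pi')\sgn(\pi)=b_{e'}\sgn(\pi)=-b_e\sgn(\pi)=-a_e$, which is \eqref{eq:face-pair}.

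Orbits that admit an odd self-equivalence need separate treatment, and I expect them to be the main obstacle, since there $\partial Z=0$ imposes no constraint. Let $\tau=g\pi_0\tau$ with $\sgn(\pi_0)=-1$. Then an occurrence can be written in either normalized sign. I therefore split each such occurrence into two halves of weight $w_\sigma/2$ and pair each half with the other, taking $g_e=g$ and $\pi_e=\pi_0$ transported to $\tau_e$, that is, conjugated by the frame of $\tau_e$. The sign condition holds because $\sgn(\pi_e)=-1$ and $a_{e'}=a_e$. Splitting an occurrence into copies is allowed by the statement, so this self-pairing is legitimate.

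The remaining points are bookkeeping: every occurrence lies in exactly one unoriented orbit, and all splittings are finite because the presentation \eqref{eq:cycle-presentation} is finite. I do not expect genericity to enter, since all faces of generic tuples are generic. The delicate part is fixing conventions so that $\pi$ acts compatibly with the sign convention $[\sigma\mathbf x]=\sgn(\sigma)[\mathbf x]$. That compatibility is what I would check first.
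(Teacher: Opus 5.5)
Your proposal is correct and follows essentially the same route as the paper. You group face occurrences by unoriented orbit; on orbits without odd self-equivalence you use $\partial Z=0$ in the oriented basis to balance positive and negative weights and match them greedily (your northwest-corner rule is the paper's ``subtract the smaller remaining weight'' procedure); and on orbits with an odd self-equivalence you split each occurrence into two equal halves paired by that self-equivalence, with your normalized sign $b=a\,\sgn(\pi)$ and the conjugation of the self-equivalence to the frame of $\tau_e$ making explicit details the paper leaves implicit.
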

\begin{proof}
Group face occurrences by their unoriented orbit. On an orbit with no odd self-equivalence,
choose an orientation. Because $\partial Z=0$, the total positive and
negative oriented weights agree. Match them by repeatedly taking the
smaller of two remaining weights and subtracting it from both lists.
This finite procedure gives equal-weight copies with opposite oriented
signs, hence \eqref{eq:face-pair}.

For a face whose orbit admits an odd self-equivalence, split its weight
into two equal copies and pair them using that self-equivalence. Their incidence
signs are equal, and the odd permutation supplies the minus sign in
\eqref{eq:face-pair}. Only face weights are split; no subdivision of the parent simplices is
involved.
\end{proof}

Orient each pair $e\to e'$. Introduce a variable $y_e\in X$ at its
source and set $y_{e'}=g_e y_e$ at the target. Introduce a further
variable $x_\sigma\in X$ for each parent five-tuple. The source
variables $y_e$ and the variables $x_\sigma$ are independent coordinates
on a product of copies of $X$. Restrict to choices for which all tuples
in the cone formulas are generic.

\begin{proposition}[The two-cone identity]\label{prop:two-cone}
There are orbit chains $S Z\in\cochain_4^{\orb}$ and
$K Z\in\cochain_5^{\orb}$, depending on these choices, such that
\begin{equation}\label{eq:two-cone-identities}
 \partial S Z=0,\qquad Z-S Z=\partial K Z.
\end{equation}
The chain $S Z$ has a presentation of mass at most $20\|Z\|_1$.
Every elementary simplex of that presentation consists of two free
vertices and a fixed generic triple. If $C$ is a pointwise $G$-invariant
alternating function on $\Omega_5$ with $dC=0$ on $\Omega_6$, then
$\langle C,S Z\rangle=\langle C,Z\rangle$.
\end{proposition}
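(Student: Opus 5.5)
We build $K Z$ by coning twice, first over the face variables $y_e$ and then over the parent variables $x_\sigma$, and set $S Z=Z-\partial K Z$. The relation $Z-S Z=\partial K Z$ then holds by definition, and $\partial S Z=\partial Z=0$ is automatic. So the content of the statement is that $S Z$ has a presentation by simplices with two free vertices and a fixed triple, of mass at most $20\|Z\|_1$.

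\emph{First cone.} Fix the face pairing of \Cref{lem:face-pairing}. For each parent $\sigma$ with face copies $e$, consider the configuration chain
\[
 \sum_{e\text{ from }(\sigma,i)} r_e\, a_e\, s_{y_e}\tau_e .
\]
This is a sum of $5$-point simplices, and its total weight per parent is $5w_\sigma$. By the cone identity \eqref{eq:cone}, $\partial s_{y_e}\tau_e=\tau_e-s_{y_e}\partial\tau_e$. Summing over the copies of $\sigma$'s faces with the incidence signs expresses $w_\sigma\varepsilon_\sigma\partial[\sigma]$ minus the total of these cones. Paired copies satisfy $\tau_{e'}=g_e\pi_e\tau_e$ and $y_{e'}=g_ey_e$, with opposite oriented signs. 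Hence, after applying $q$, the twice-deleted terms $s_{y_e}\partial\tau_e$ cancel pairwise, because $g s_y=s_{gy}g$.

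\emph{Second cone.} For each $\sigma$, form $K_\sigma=s_{x_\sigma}\bigl([\sigma]-\sum_e a_e s_{y_e}\tau_e\bigr)$, suitably weighted, and set
\[
 K Z=\sum_\sigma\varepsilon_\sigma w_\sigma K_\sigma .
\]
Applying \eqref{eq:cone} again gives $Z-\partial K Z$ as a combination of two kinds of terms. The first kind are cones $s_{x_\sigma}$ of the $y$-cones on faces of faces, i.e.\ simplices $(x_\sigma,y_e,\ldots)$ with the remaining three vertices taken from $\sigma$. The second kind are the first-cone pieces $s_{y_e}\tau_e$, and these cancel in pairs in the orbit quotient.

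Each surviving simplex has the two free vertices $x_\sigma,y_e$ and a triple of vertices of $\sigma$, which is generic and fixed. There are $5$ faces per parent and $4$ subfaces per face, giving $20$ simplices per unit weight; this is the source of the bound $20\|Z\|_1$. Genericity of all tuples holds for $(y,x)$ outside a null Zariski-closed set, which we discard as allowed.

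\emph{Period equality.} Since $C$ is pointwise invariant and alternating on generic tuples, $\langle dC,\cdot\rangle$ pairs with orbit chains. Then
\[
 \langle C,Z\rangle-\langle C,S Z\rangle=\langle C,\partial K Z\rangle=\langle dC,K Z\rangle=0 .
\]

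\emph{Main obstacle.} The delicate step is the sign and transport bookkeeping in the orbit quotient. The pairing relation $a_{e'}\sgn(\pi_e)=-a_e$ must make the paired cones cancel exactly after applying $q_4$. The permutation $\pi_e$ acts on the base $\tau_e$ while the apex stays first, so $\sgn(\pi_e)$ is indeed the sign of the permuted cone. Tracking this consistently, including the self-paired odd-orbit faces, is the heart of the proof. The second delicate point is to check that the count yields exactly the $20$ two-free-vertex simplices and no leftover terms.
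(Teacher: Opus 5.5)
Your proposal is correct and is essentially the paper's proof: your $K Z$ is $\sum_\sigma q_5(s_{x_\sigma}A_\sigma)$ with $A_\sigma=\varepsilon_\sigma w_\sigma\sigma-H_\sigma$. Once the paired cones give $\sum_\sigma q_4(H_\sigma)=0$, your $Z-\partial K Z$ is exactly the paper's $S Z=\sum_\sigma q_4(s_{x_\sigma}B_\sigma)$, so setting $S Z:=Z-\partial K Z$ only makes $\partial S Z=0$ automatic instead of deriving it from $\sum_\sigma q_3(B_\sigma)=0$. The one thing to make precise is ``suitably weighted'': after the outer factor $\varepsilon_\sigma w_\sigma$, the cone must be on $\varepsilon_\sigma w_\sigma\sigma-\sum_e a_er_e\,s_{y_e}\tau_e$, so inside your bracket the coefficient of $s_{y_e}\tau_e$ is $(-1)^ir_e/w_\sigma$ rather than $a_er_e/w_\sigma$, because $a_e=\varepsilon_\sigma(-1)^i$ already carries $\varepsilon_\sigma$; with the extra sign, parents with $\varepsilon_\sigma=-1$ would break both the pairwise cancellation and $\partial A_\sigma=B_\sigma$.
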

\begin{proof}
For each parent simplex form the following alternating configuration chains:
\begin{equation}\label{eq:first-cones}
 \begin{aligned}
 H_\sigma&=\sum_{i=0}^4\ 
       \sum_{e\text{ in }(\sigma,i)}a_er_e\,s_{y_e}\tau_{\sigma i},\\
 B_\sigma&=\sum_{i=0}^4\ 
       \sum_{e\text{ in }(\sigma,i)}a_er_e\,s_{y_e}\partial\tau_{\sigma i}.
 \end{aligned}
\end{equation}
Here $H_\sigma$ has degree four and $B_\sigma$ degree three.
Since the weights of the copies of each face sum to
$w_\sigma$, \eqref{eq:cone} gives
\begin{equation}\label{eq:first-cone-boundaries}
 \partial H_\sigma
 =\varepsilon_\sigma w_\sigma\partial\sigma-B_\sigma,
 \qquad
 \partial B_\sigma
 =\varepsilon_\sigma w_\sigma\partial^2\sigma=0.
\end{equation}
These identities hold before taking orbit quotients.

For a paired face, extending $\pi_e$ by fixing the new first vertex
and transporting the apex gives
\[
 q_4(s_{y_{e'}}\tau_{e'})
 =\sgn(\pi_e)q_4(s_{y_e}\tau_e).
\]
The equal weights and \eqref{eq:face-pair} therefore imply
$\sum_\sigma q_4(H_\sigma)=0$. Taking its boundary, using
\eqref{eq:first-cone-boundaries} and $\partial Z=0$, also gives
\begin{equation}\label{eq:paired-cancellation}
 \sum_\sigma q_4(H_\sigma)=0,
 \qquad\sum_\sigma q_3(B_\sigma)=0.
\end{equation}
The cancellations in \eqref{eq:paired-cancellation} take place in the
coinvariant complex.

Now put $A_\sigma=\varepsilon_\sigma w_\sigma\sigma-H_\sigma$.
Equation \eqref{eq:first-cone-boundaries} says
$\partial A_\sigma=B_\sigma$ and $\partial B_\sigma=0$ in the
configuration-chain complex. Define
\begin{equation}\label{eq:second-cones}
 S Z=\sum_\sigma q_4(s_{x_\sigma}B_\sigma),\qquad
 K Z=\sum_\sigma q_5(s_{x_\sigma}A_\sigma).
\end{equation}
The cone identity and \eqref{eq:paired-cancellation} give
\[
 \partial S Z=\sum_\sigma q_3(B_\sigma)=0,
 \qquad
 \partial K Z=Z-\sum_\sigma q_4(H_\sigma)-S Z=Z-S Z.
\]
Each parent simplex has five faces, each face boundary has four terms,
and the weights of its copies sum to $w_\sigma$. Thus the elementary
presentation of $s_{x_\sigma}B_\sigma$ has mass at most $20w_\sigma$.
Its terms are $(x_\sigma,y_e,t_0,t_1,t_2)$, with the last three
vertices fixed, or the same expression with $y_e=g_e y$.
Finally \eqref{eq:two-cone-identities} and $dC=0$ give
$\langle C,Z-S Z\rangle=\langle dC,K Z\rangle=0$.
\end{proof}

\subsection{Simultaneous genericity of cone parameters}
Let $\mathcal P_0$ be the product of one copy of $X$ for each
$x_\sigma$ and one for each source variable $y_e$. Target variables
are determined by transport, not chosen independently.
There is a nonempty Zariski-open set
$\mathcal P^\circ\subset\mathcal P_0$ on which every five-tuple in
$S Z$ and every six-tuple in $K Z$ is generic. Indeed, each such
condition is an open algebraic condition and is individually nonempty:
a new vertex can be chosen outside finitely many pairing hyperplanes
and spans of old triples. For a term with two new vertices, choose them
successively. Fixed group transformations of a free variable preserve
this conclusion. There are only finitely many conditions, and their
intersection is nonempty because the product of projective spaces
$\mathcal P_0$ is irreducible.

On this set write the elementary presentation as
\begin{equation}\label{eq:parameterized-cycle}
 S Z(\mathbf p)=\sum_\alpha c_\alpha[p_\alpha(\mathbf p)],
 \qquad p_\alpha:\mathcal P^\circ\to\cU_5,
 \qquad\sum_\alpha|c_\alpha|\le20\|Z\|_1.
\end{equation}
Every $p_\alpha$ is a submersion: it is the projection onto the two
independent free variables of that term, followed by fixed
transformations and the map of \Cref{lem:two-free}. Restriction to the open set $\mathcal P^\circ$ preserves
submersivity.

\subsection{Configuration measure chains and duality}
Let
$\mathfrak M_k$ be the space of finite signed Borel measures on
$\cU_{k+1}$ for $k=3,4,5$, with total variation norm
$\|\nu\|_{\TV}=|\nu|(\cU_{k+1})$. Define boundaries $\partial_k:\mathfrak M_k\to\mathfrak M_{k-1}$
for $k=4,5$ and alternating projections by
\begin{equation}\label{eq:measure-operators}
 \begin{aligned}
 \partial_k\nu&=\sum_{i=0}^{k}(-1)^i(\partial_i)_*\nu
       &&(k=4,5),\\
 \mathbf A_k\nu&=\frac1{(k+1)!}
                 \sum_{\sigma\in S_{k+1}}\sgn(\sigma)\sigma_*\nu
       &&(k=3,4,5).
 \end{aligned}
\end{equation}
We suppress the boundary subscript when its domain is clear.
Here permutations act on normalized Gram coordinates. The operator
$\mathbf A_k$ is the alternating projection on measures and does not
increase total variation. Absolute continuity means absolute continuity of the total variation
with respect to the smooth measure class.

The face and permutation identities give
\begin{equation}\label{eq:measure-identities}
 \begin{gathered}
 \partial_4\partial_5=0,\qquad
 \partial_k\mathbf A_k=\mathbf A_{k-1}\partial_k\quad(k=4,5),\\
 \jmath_k([\mathbf x])=\mathbf A_k\delta_{\gamma_{k+1}(\mathbf x)}
       \quad(k=3,4,5).
 \end{gathered}
\end{equation}
The last formula defines norm-nonincreasing maps
$\jmath_k:\cochain_k^{\orb}\to\mathfrak M_k$, with
$\partial_k\jmath_k=\jmath_{k-1}\partial$ for $k=4,5$. To verify these
statements, pair measures with bounded Borel functions. Pushforward is
adjoint to pullback, and the assertions reduce to $d^2=0$,
$d\Alt=\Alt d$, and the orbit and orientation relations.
Bounded Borel functions separate finite measures, so these are actual
measure identities. For an alternating cochain $C$,
\begin{equation}\label{eq:alternating-pairing}
 \int C\,d\mathbf A_k\nu=\int C\,d\nu,
 \qquad \int C\,d\jmath_k(Z)=\langle C,Z\rangle,
\end{equation}
whenever the integrals exist. Faces and permutations are nonsingular;
therefore their pushforwards preserve absolute continuity.

For a measurable cochain $f$ and a finite signed measure $\nu$, assume
$f\circ\partial_i\in L^1(|\nu|)$ for every face. Then boundary--coboundary
duality gives
\begin{equation}\label{eq:Stokes}
 \int df\,d\nu
 =\sum_i(-1)^i\int f\circ\partial_i\,d\nu
 =\int f\,d\partial\nu.
\end{equation}
Indeed, $|(\partial_i)_*\nu|\le(\partial_i)_*|\nu|$ ensures all
terms are integrable, and the equality is the definition of pushforward.
In particular the integral is zero when $\partial\nu=0$.

\begin{proof}[Proof of \Cref{thm:period-test}]
If $Z=0$ there is nothing to prove. Apply alternation to
\eqref{eq:bounded-representative}; this replaces $F,f$ by alternating
cochains and does not increase $\|F\|_\infty$. Descend them to
$\cU_5,\cU_4$ using \Cref{prop:coordinates}. The pointwise invariant
function $C$ also descends, and its descended function is continuous
by the local sections of $\gamma_5$. Choose the presentation,
face pairing, and parameterized cycle above. Fix a coordinate box
$P\Subset\mathcal P^\circ$ and a nonnegative smooth compactly
supported density $\rho(\mathbf p)\,d\mathbf p$ in $P$ with positive
integral.

For every elementary term, every permutation, and every face, the map
$\partial_i\circ\sigma\circ p_\alpha:P\to\cU_4$ is nonsingular.
Hence the function
\begin{equation}\label{eq:weight}
 W(\mathbf p)=\sum_\alpha\sum_{\sigma\in S_5}\sum_{i=0}^4
 \left|f\bigl(\partial_i\sigma p_\alpha(\mathbf p)\bigr)\right|
\end{equation}
is finite almost everywhere. Choose the probability measure
\begin{equation}\label{eq:mu}
 d\mu(\mathbf p)=\frac1{Z_0}
       \frac{\rho(\mathbf p)}{1+W(\mathbf p)}\,d\mathbf p,
 \qquad
 Z_0=\int_P\frac{\rho}{1+W}\,d\mathbf p>0.
\end{equation}
Since each summand in $W$ is bounded above by $1+W$, every face
evaluation in \eqref{eq:weight} is $\mu$-integrable.

Define the averaged measure cycle by finite pushforwards:
\begin{equation}\label{eq:averaged-cycle}
 \nu=\sum_\alpha c_\alpha\,
             \mathbf A_4(p_\alpha)_*\mu.
\end{equation}
Since every $p_\alpha$ is a submersion, these pushforwards are
absolutely continuous, by Fubini in submersion charts. So is their
alternation. Moreover,
\begin{equation}\label{eq:averaged-mass}
 \|\nu\|_{\TV}\le\sum_\alpha|c_\alpha|\le20\|Z\|_1.
\end{equation}
For each $\mathbf p$, \Cref{prop:two-cone} and the chain map
$\jmath$ give
\[
 \partial\left(\sum_\alpha c_\alpha
       \mathbf A_4\delta_{p_\alpha(\mathbf p)}\right)=0.
\]
Integrate this identity against $\mu$, interpreting integration as the
finite sum of pushforwards in \eqref{eq:averaged-cycle}. Boundary is
also a finite sum of pushforwards, so $\partial\nu=0$ as an equality
of signed measures. Equivalently, the identity holds against every
bounded Borel test function.

For the same reason the pointwise period identity in
\Cref{prop:two-cone} integrates to
\begin{equation}\label{eq:averaged-period}
 \int C\,d\nu=\langle C,Z\rangle.
\end{equation}
The sets $\sigma p_\alpha(\operatorname{supp}\mu)$ are compact in
$\cU_5$. Continuity of $C$ therefore gives $C\in L^1(|\nu|)$.

For $f$ we use the variation bound
\[
 |\nu|\le\frac1{5!}\sum_\alpha\sum_{\sigma\in S_5}
              |c_\alpha|(\sigma\circ p_\alpha)_*\mu.
\]
Equations \eqref{eq:weight}--\eqref{eq:mu} therefore imply
$f\circ\partial_i\in L^1(|\nu|)$ for each face. By
\eqref{eq:Stokes}, $\int df\,d\nu=0$. Finally
\eqref{eq:bounded-representative} holds $\nu$-almost everywhere because
$\nu$ is absolutely continuous. Combining this with
\eqref{eq:averaged-period} and \eqref{eq:averaged-mass} gives
\[
 |\langle C,Z\rangle|=\left|\int F\,d\nu\right|
 \le\|F\|_\infty\|\nu\|_{\TV}
 \le20\|Z\|_1\|F\|_\infty.
\]
\end{proof}

The averaging measure may depend on the primitive, whereas the bound
$20\|Z\|_1$ does not. Thus bounded representatives have uniformly
bounded periods on families of generic orbit cycles with uniformly
bounded $\ell^1$-mass.

\section{Period divergence and the vanishing theorem}\label{sec:completion}

The Laurent expansion of $P(A,3)$ excludes bounded representatives of
all nonzero real components of $\cR$. Together with comparison
injectivity and \Cref{thm:ordinary}, this proves the vanishing theorem.

\begin{proposition}[Laurent expansion of the period]\label{prop:pole}
As $A\to2$ in a sufficiently small punctured complex disk,
\begin{equation}\label{eq:Laurent}
 P(A,3)=\frac{22}{45(A-2)^2}+\frac{61}{180(A-2)}+O(1).
\end{equation}
The cycle $Z(A,3)$ is generic throughout that disk.
\end{proposition}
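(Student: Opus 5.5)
The genericity assertion is already \Cref{lem:generic-line}, so what remains is the Laurent expansion. The plan is an explicit reduction of $P(A,3)$ to a single-variable rational function, followed by locating which summands carry the pole at $A=2$ and expanding those.

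\textbf{Step 1: write the sixteen evaluations as rational functions of $A$.} By \eqref{eq:period} and \eqref{eq:F-G}, $P(A,3)$ is a signed sum of sixteen values $F_j(a,b)=\cR(\Phi_j(a,b))$. Here $(a,b)$ runs over the eight parameter pairs \eqref{eq:eight-pairs}, and $j=1,2$. For each evaluation I will substitute $\Phi_j(a,b)$ from \eqref{eq:Phi} into the normalized matrix $B(u,v,w,x,y)$ of \eqref{eq:B}. The five principal Pfaffians $P_s$ are the polynomials \eqref{eq:five-pfaffians}. The thirty-term formula \eqref{eq:thirty} then gives $F_j(a,b)$ as a rational function of $(a,b)$. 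To keep the bookkeeping manageable, I will first compute the two closed forms $F_1(a,b)$ and $F_2(a,b)$ symbolically, so that $\mathcal G=F_2-F_1$ is a single rational function in $(a,b)$. Only then will I specialize to the eight curves $(a(A),b(A))$.

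\textbf{Step 2: locate the singular terms.} The denominators of $F_j$ are products of $P_s(\Phi_j(a,b))$, and these are factors of $\Delta_U$ together with monomials. The proof of \Cref{lem:generic-line} shows that all eight admissibility functions are nonzero identically. It does not show that they are nonzero at $A=2$. So I will evaluate each factor of $\Delta_U$ at $A=2$ on the eight curves. For example, the pair $(A^2,3)$ at $A=2$ gives $a=4=b+1$, so the factor $a-b-1$ vanishes there. Likewise $(-3A^2,3)$ gives $a=-12$, and $(A/9,A)$ gives $a=2/9$ with $b=2$, so $b-2$ vanishes. The curves whose factors do not vanish at $A=2$ contribute terms holomorphic at $A=2$, and I will discard them into the $O(1)$.

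\textbf{Step 3: expand the singular terms.} For each singular curve, I will set $A=2+\epsilon$ and expand the relevant $F_j$ to order $\epsilon^{-1}$. A double pole arises when two denominator factors vanish simultaneously. This can happen through a product $P_lP_m$ with both factors vanishing, or through a squared factor, such as $b-2$ when $b=A$, where one factor vanishes to order two. Summing these contributions with the signs from \eqref{eq:period} should give the coefficients $22/45$ and $61/180$. The appendix records this calculation, so what I have to do is organize it and check it.

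\textbf{Main obstacle.} The main difficulty is the cancellation bookkeeping in Steps 2 and 3. Individual face terms may carry poles that cancel within $\mathcal G$ or between consecutive steps of $\mathcal Q$. A careless expansion could therefore produce a wrong leading coefficient, or a spurious higher-order pole. My first move here will be to compute the $\epsilon$-expansion of $\mathcal Q(-A^2,3)$ and $\mathcal Q(A^{-2}\cdot\ldots)$ as whole units rather than term by term. I will then cross-check the leading coefficient $22/45$ numerically, by evaluating $(A-2)^2P(A,3)$ at a few small values of $A-2$.
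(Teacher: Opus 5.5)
Your plan is the paper's proof: genericity is \Cref{lem:generic-line}, and the Laurent coefficients come from substituting the closed forms \eqref{eq:F1}--\eqref{eq:F2} along the eight curves \eqref{eq:eight-pairs} and reading off $h_j(2)$ and $h_j'(2)$ for $h_j=(A-2)^2g_j$. One correction to your Step~2 and main obstacle: the only curves singular at $A=2$ are $(A^2,3)$, through $(a-b-1)^2$ (from $P_1P_4$) in both $F_1$ and $F_2$, and $(-1/9,A)$ and $(A/9,A)$, through $(b-2)^2$ (from $P_0P_2$) in $F_1$ alone; so $(-3A^2,3)$ is regular, the blocks to expand are $\mathcal Q(-A^2,3)$ and $\mathcal Q(1/9,A)$ rather than $\mathcal Q(A^{-2},i/3)$, and because each vanishing factor has a simple zero, neither higher-order poles nor pole cancellations can occur, so the three polar parts $(-\tfrac1{10},-\tfrac1{20})$, $(\tfrac{14}{45},\tfrac15)$, $(\tfrac5{18},\tfrac{17}{90})$ simply add.
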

\begin{proof}
Genericity is \Cref{lem:generic-line}. Insert the two rational
functions $F_1,F_2$ of \eqref{eq:F-G}, given explicitly in
\Cref{app:algebra}, into \eqref{eq:period}. Each summand has at
most a double pole at $A=2$. For the four blocks in that equation,
write $Q_j(A)$ for the block and $h_j(A)=(A-2)^2Q_j(A)$.
Their values and first derivatives at $2$ are
\begin{equation}\label{eq:pole-table}
\begin{array}{c|rr}
 Q_j(A)&h_j(2)&h_j'(2)\\ \hline
 \mathcal Q(-A^2,3)&-1/10&-1/20\\
 \mathcal Q(1/9,A)&53/90&7/18\\
 \mathcal Q(A^{-2},i/3)&0&0\\
 \mathcal Q(-9,i/A)&0&0.
\end{array}
\end{equation}
The appendix gives the functions and the termwise calculation of these
numbers. Summing the columns gives $22/45$ and $61/180$. After these
polar terms are subtracted, the rational function is holomorphic at
$2$. Shrinking the disk to avoid any other pole makes the remainder
bounded, proving \eqref{eq:Laurent}.
\end{proof}

\begin{theorem}[Nonexistence of bounded representatives]\label{thm:no-bounded}
For every nonzero $\lambda\in\Hom_{\RR}(\CC,\RR)$, the class
$[\lambda\circ\cR]\in\Hm^4(G\curvearrowright X;\RR)$ has no
essentially bounded representative. Moreover, the map
\begin{equation}\label{eq:basis}
 \Theta:\Hom_{\RR}(\CC,\RR)\longrightarrow
           \Hm^4(G\curvearrowright X;\RR),
 \qquad\lambda\longmapsto[\lambda\circ\cR]
\end{equation}
is an isomorphism.
\end{theorem}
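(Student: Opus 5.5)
The plan is to combine the measurable period estimate of \Cref{thm:period-test} with the Laurent expansion of \Cref{prop:pole}.

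\emph{Nonexistence of bounded representatives.} Fix nonzero $\lambda\in\Hom_{\RR}(\CC,\RR)$ and set $C=\lambda\circ\cR$. By \Cref{prop:cocycle}, $C$ is real-valued, invariant and alternating, and it is continuous on $\Omega_5$ (being rational on the generic locus). It also satisfies $dC=0$ pointwise on $\Omega_6$, so the hypotheses of \Cref{thm:period-test} hold. Suppose $[C]$ had an essentially bounded representative $F$. Then $C=F+df$ almost everywhere for some $f\in L^0(X^4)^G$. Applying \eqref{eq:period-bound} to the cycles $Z(A,3)$, which are generic for $0<|A-2|<\delta$ by \Cref{lem:generic-line} and satisfy $\|Z(A,3)\|_1\le16$ by \eqref{eq:cycle-mass}, gives
\[
 |\lambda(P(A,3))|=|\langle C,Z(A,3)\rangle|\le 320\|F\|_\infty
\]
uniformly in $A$. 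Here I use that the period of $\lambda\circ\cR$ equals $\lambda$ applied to the complex period \eqref{eq:period}, since pairing with real orbit chains is real-linear.

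The key point is to choose $A$ so that $\lambda(P(A,3))$ blows up. Write $\lambda(w)=\operatorname{Re}(\bar\mu w)$ with $\mu\in\CC^\times$. By \eqref{eq:Laurent}, $P(A,3)=\frac{22}{45}(A-2)^{-2}\bigl(1+O(A-2)\bigr)$. I will take $A=2+\epsilon e^{i\phi}$, where $\phi$ is chosen so that $e^{-2i\phi}$ is a positive real multiple of $\mu$. Then $\operatorname{Re}(\bar\mu P)\sim\frac{22}{45}|\mu|\epsilon^{-2}\to\infty$ as $\epsilon\to0$. This contradicts the uniform bound, so no bounded representative exists. The only point needing care is that the complex punctured disk in \Cref{prop:pole} allows complex $A$ near $2$, which it does.

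\emph{Isomorphism.} The map $\Theta$ is real-linear between two-dimensional spaces by \Cref{thm:ordinary}, so it suffices to prove injectivity. If $[\lambda\circ\cR]=0$, then $\lambda\circ\cR=df$ almost everywhere with $f\in L^0$, which is a representative with $F=0$, certainly bounded. By the first part this forces $\lambda=0$. (Equivalently, \eqref{eq:period-bound} with $F=0$ gives vanishing periods, contradicting the pole.) I expect the main obstacle to be entirely contained in the prior results, namely the appendix computation behind \eqref{eq:Laurent} and the averaging theorem. Within this proof, the one thing to verify is that the leading coefficient $22/45$ is nonzero, so that the phase choice works for every direction $\lambda$.
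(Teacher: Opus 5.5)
Your proposal is correct and follows the paper's proof essentially step for step. Like the paper, you apply the period estimate to $Z(A,3)$ with mass at most $16$ to get the uniform bound $320\|F\|_\infty$, let $A\to2$ radially along a direction where the $\lambda$-image of the double-pole term is nonzero, and then deduce injectivity of $\Theta$ (and hence, by the dimension count from \Cref{thm:ordinary}, that it is an isomorphism). The only difference is cosmetic: you pick the phase so that $\lambda(e^{-2i\phi})=|\mu|$ is maximal, whereas the paper only needs it to be nonzero.
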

\begin{proof}
Suppose $\lambda\circ\cR=F+df$ almost everywhere, with $F$ bounded
and $f$ finite measurable, both invariant. By \Cref{thm:period-test}
and \eqref{eq:cycle-mass},
\begin{equation}\label{eq:uniform-P}
 |\lambda(P(A,3))|\le320\|F\|_\infty
\end{equation}
for every sufficiently small $0<|A-2|$.
Choose $\theta$ so that $\lambda(e^{-2i\theta})\ne0$; such a
choice exists because the unit circle spans $\CC$ as a real vector
space. Along $A=2+re^{i\theta}$, \eqref{eq:Laurent} gives
\[
 \lambda(P(2+re^{i\theta},3))
 =\frac{22}{45r^2}\lambda(e^{-2i\theta})+O_\lambda(r^{-1}).
\]
The nonzero double-pole term dominates as $r\downarrow0$, contradicting
\eqref{eq:uniform-P}. 
If $\Theta(\lambda)=0$, then $\lambda\circ\cR$ has the bounded
representative zero. The first assertion therefore forces $\lambda=0$.
So $\Theta$ is injective. Its domain and target both have real
dimension two by \Cref{thm:ordinary}, and it is consequently an
isomorphism. In particular $\operatorname{Re}\cR$ and
$\operatorname{Im}\cR$ represent a basis.
\end{proof}

\begin{proof}[Proof of \Cref{thm:main}]
Let $\alpha\in\Hmb^4(G\curvearrowright X)$. By \Cref{thm:no-bounded},
its ordinary image has the form $[\lambda\circ\cR]$ for a unique
$\lambda\in\Hom_{\RR}(\CC,\RR)$. This image has a bounded
representative, namely any bounded cocycle representing $\alpha$.
The same theorem forces $\lambda=0$. Injectivity of $c_X^4$ from
\Cref{cor:injective} now gives $\alpha=0$.
Equation \eqref{eq:projective-reduction} proves the rank-two group
vanishing.

For the complex symplectic family, rank one is the Bucher--Savini
input and rank two is the result just proved. The injections in
\Cref{prop:reduction} give vanishing in higher rank by induction.
The finite central isogeny $\Sp(4,\CC)\to\SO(5,\CC)$ supplies the
rank-two base for the odd orthogonal family; rank one follows from
$\SL(2,\CC)\to\SO(3,\CC)$. Bounded cohomology is unchanged by
these finite central kernels \cite{MonodBook}, and the odd orthogonal
injections in \Cref{prop:reduction} complete the induction.
\end{proof}

\begin{corollary}[Exactness of the bounded Gram complex]\label{cor:exactness}
For the operators \eqref{eq:D} and \eqref{eq:E},
\[
 \ker(E:L^\infty(\cU_5)\to L^\infty(\cU_6))
 =D(L^\infty(\cU_4)).
\]
Every bounded solution $F$ of $EF=0$ therefore admits a bounded
primitive $f$ with $Df=F$. The range of $D$ is closed.
On alternating cochains, $D$ is bounded below by $1/7$ and has zero
kernel.
\end{corollary}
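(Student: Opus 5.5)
The plan is to read the corollary off the exact model \eqref{eq:exact-model} together with \Cref{thm:main} and \Cref{thm:bounded-primitives}.

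\emph{Exactness.} \Cref{thm:main} gives $\Hcb^4(G)=0$. By \eqref{eq:exact-model} this group is isomorphic to $\ker E/D(L^\infty(\cU_4))$ on bounded Gram cochains. So every $F\in L^\infty(\cU_5)$ with $EF=0$ is of the form $Df$ with $f\in L^\infty(\cU_4)$. The reverse inclusion $D(L^\infty)\subset\ker E$ is $ED=0$, recorded after \eqref{eq:E}. I should check that the identification in \eqref{eq:exact-model} is literally the quotient of bounded cocycles by bounded coboundaries in the Gram model. This follows from \Cref{prop:coordinates}, since the descent isomorphisms are isometric on $L^\infty$ and intertwine $d$ with $D$ and $E$ by face naturality \eqref{eq:face-naturality}. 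No alternation hypothesis is needed at this step.

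\emph{Lower bound and kernel on alternating cochains.} If $f\in L^\infty(\cU_4)$ is alternating, then \Cref{thm:bounded-primitives} applies directly and gives $\|f\|_\infty\le7\|Df\|_\infty$, that is, $\|Df\|_\infty\ge\frac17\|f\|_\infty$. In particular $Df=0$ forces $f=0$, so the kernel on alternating cochains is zero.

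\emph{Closed range.} I will first show that $D(L^\infty(\cU_4))=\ker E$. Then closedness follows because $E$ is a bounded operator on $L^\infty$: a finite signed sum of pullbacks along nonsingular maps has norm at most $6$, so its kernel is closed. A second route, if one prefers to avoid exactness, is to note that alternation commutes with $D$, so that $D(L^\infty)\supset D(\Alt_4L^\infty)$. The lower bound makes $D(\Alt_4L^\infty)$ closed, and on alternating cocycles the two ranges agree by \Cref{lem:bounded-alternation}. The first route is cleaner.

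The only point needing care is the bookkeeping in \eqref{eq:exact-model}, namely that the isomorphism there really is induced by descent on the level of cochains. I would not expect any genuine obstacle.
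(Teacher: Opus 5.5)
Your proposal is correct and follows the paper's proof: exactness from \eqref{eq:exact-model} together with the vanishing theorem, closed range because $E$ is bounded and so has closed kernel, and the alternating assertions taken directly from \Cref{thm:bounded-primitives}. Your optional second route for closed range is incomplete as sketched, since it only shows that $D(\Alt_4L^\infty)$ is closed; you would also need $D(\ker\Alt_4)=\ker E\cap\ker\Alt_5$, which is what the vanishing of $H^4(\ker\Alt)$ in \Cref{lem:bounded-alternation} supplies, but this does not matter because your first route already suffices.
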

\begin{proof}
Exactness is \eqref{eq:exact-model} and the vanishing just proved.
Since $E$ is bounded, its kernel is closed. The alternating assertions
are \Cref{thm:bounded-primitives}.
\end{proof}

\appendix
\section{Explicit period computations}\label{app:algebra}

We record the rational functions and Laurent coefficients used in
\eqref{eq:pole-table}.

Let $F_j(a,b)=\cR(\Phi_j(a,b))$. Inserting the five Pfaffians
\eqref{eq:five-pfaffians} into the thirty-term formula
\eqref{eq:thirty}, followed by \eqref{eq:Phi}, gives
\begin{align}
 F_1(a,b)&=\frac{N_1(a,b)}
 {30(b-2)^2(a-b-1)^2(a-b^2+1)},\label{eq:F1}\\
 F_2(a,b)&=-\frac{N_2(a,b)}
 {30(2b-1)^2(a-b-1)^2(ab+b^2-1)},\label{eq:F2}
\end{align}
where
\begin{align*}
 N_1(a,b)={}&3a^4-4a^3b^2-8a^3b-4a^3
 +5a^2b^4-8a^2b^3\\*
 &+48a^2b^2-40a^2b+34a^2
 -8ab^5+8ab^3-20ab^2\\*
 &-24ab-4a+6b^6-8b^5-b^4+16b^3-8b^2+8b+19,\\[3pt]
 N_2(a,b)={}&3a^4b^2-4a^3b^3-8a^3b^2-4a^3b
 +34a^2b^4-40a^2b^3\\*
 &+48a^2b^2-8a^2b+5a^2
 -4ab^5-24ab^4-20ab^3\\*
 &+8ab^2-8a+19b^6+8b^5-8b^4+16b^3-b^2-8b+6.
\end{align*}
These identities involve only rational operations on the entries of a
five-point Gram matrix. Clearing the displayed denominators reduces
each to a polynomial identity.

For the eight pairs $(a_j(A),b_j(A))$ in \eqref{eq:eight-pairs}, set
\[
 g_j(A)=F_2(a_j(A),b_j(A))-F_1(a_j(A),b_j(A)).
\]
Then $P(A,3)=\sum_{j=1}^8g_j(A)$. The pole contributions are as
follows:
\begin{equation}\label{eq:edge-poles}
\begin{array}{c|c|rr}
 j&(a_j(A),b_j(A))&[\,(A-2)^{-2}\,]g_j&[\,(A-2)^{-1}\,]g_j\\ \hline
 1&(A^2,3)&-1/10&-1/20\\
 2&(-3A^2,3)&0&0\\
 3&(-1/9,A)&14/45&1/5\\
 4&(A/9,A)&5/18&17/90\\
 5&(-A^{-2},i/3)&0&0\\
 6&(i/(3A^2),i/3)&0&0\\
 7&(9,i/A)&0&0\\
 8&(-9i/A,i/A)&0&0.
\end{array}
\end{equation}
The brackets in the column headings mean the corresponding Laurent
coefficient. For $j=1$, the only vanishing denominator factor
at $A=2$ is $(a-b-1)^2=(A^2-4)^2$. For $j=3,4$ it is
$(b-2)^2$ in $F_1$. All denominator factors for $j=2,5,6,7,8$ are nonzero at $2$. Thus no higher-order poles occur.

Set $h_j(A)=(A-2)^2g_j(A)$ and remove its removable singularity at
$A=2$. The two coefficients are $h_j(2)$ and $h_j'(2)$. For $j=1$,
\[
 \begin{split}
 h_1(A)={}&-\frac{N_2(A^2,3)}
 {750(A+2)^2(3A^2+8)}
 -\frac{N_1(A^2,3)}{30(A+2)^2(A^2-8)}.
 \end{split}
\]
Substitution and differentiation give $h_1(2)=-1/10$ and
$h_1'(2)=-1/20$. For $j=3,4$ the $F_2$ term is
holomorphic, so it contributes zero to both entries; for the $-F_1$
term, cancel $(b-2)^2=(A-2)^2$ first and then substitute and
differentiate. This gives $(14/45,1/5)$ and $(5/18,17/90)$,
respectively. Grouping consecutive summands yields \eqref{eq:pole-table},
and summing gives
\[
 -\frac1{10}+\frac{14}{45}+\frac5{18}=\frac{22}{45},\qquad
 -\frac1{20}+\frac15+\frac{17}{90}=\frac{61}{180}.
\]
\section{LEAN formulation}
Our results are formalized in LEAN theorem prover, available at http://github.com/wqirocks/H4.Our formalization only includes our work, results from other scholars being used in this article were used as axioms, hence were not formalized by us. 
\section*{Declaration of generative AI and AI-assisted technologies}

During the preparation of this manuscript, the authors used OpenAI's
ChatGPT to assist with research and writing. The authors reviewed and,
where necessary, revised all AI-assisted material included in the
manuscript.

Most of the formalisation work was carried out by ChatGPT. The authors
proposed or developed parts of the framework and intervened as needed to
guide the direction and methods of formalisation. At least two authors
independently checked the formalisation results, with particular attention
to whether the formalised statements, assumptions, and axioms faithfully
represent the intended mathematics.

The authors take responsibility for the original results and conclusions
presented in this article and for the accuracy with which cited results
are stated and applied. Responsibility for the contents of cited
publications remains with their respective authors; this declaration does
not constitute an independent verification or guarantee of the correctness
of those publications. Any conclusion in this article that depends on an
external result relies on the validity of that result.

\end{document}